\documentclass[12pt,epsfig,amsfonts]{article}
\usepackage{amsmath,amssymb,epsfig, amsthm, geometry}
\setlength{\unitlength}{1cm}
\geometry{hmargin = 1 in, vmargin = 1 in}

\newtheorem{proposition}{Proposition}[section]

\newtheorem{theorem}{Theorem}
\newtheorem{lemma}[proposition]{Lemma}

\setlength{\unitlength}{1cm}

\newcommand{\ve}{\varepsilon}
\newcommand{\Si}{\mathcal{S}}

\newcommand{\C}{\mathcal{C}}
\newcommand{\E}{\mathcal{E}}

\newcommand{\R}{\mathbb{R}}
\newcommand{\tf}{\tilde f}

\newcommand{\G}{\mathcal{G}}
\newcommand{\I}{\mathcal{I}}

\newcommand{\musrb}{\mu_{\mbox{\tiny SRB}}}
\newcommand{\tmusrb}{\tilde{\mu}_{\mbox{\tiny SRB}}}
\newcommand{\Gsrb}{\G_{\mbox{\tiny SRB}}}
\newcommand{\Lp}{\mathcal{L}}
\newcommand{\pa}{\mathcal{P}}

\newcommand{\bDelta}{\overline{\Delta}}
\newcommand{\bpi}{\overline{\pi}}
\newcommand{\barF}{\overline{F}}
\newcommand{\dlj}{\Delta_{\ell,j}}
\newcommand{\bm}{\overline{m}}
\newcommand{\lip}{\mbox{\tiny Lip}}
\newcommand{\bH}{\overline{H}}

\newcommand{\B}{\mathcal{B}}
\newcommand{\bLp}{\overline{\Lp}}
\newcommand{\ra}{\mathfrak{r}}
\newcommand{\tB}{\tilde{\B}}

\newcommand{\bpsi}{\overline{\psi}}
\newcommand{\tmu}{\tilde{\mu}}
\newcommand{\tnu}{\tilde{\nu}}
\newcommand{\F}{\mathring{F}}
\newcommand{\f}{\mathring{f}}
\newcommand{\M}{\mathcal{M}}
\newcommand{\bnu}{\overline{\nu}}

\newcommand{\bareta}{\overline{\eta}}
\newcommand{\vf}{\varphi}
\newcommand{\bvf}{\overline{\varphi}}
\newcommand{\tvf}{\tilde{\varphi}}

\newcommand{\Z}{\mathcal{Z}}

\newcommand{\N}{\mathbb{N}}

\begin{document}

\title{Entropy, Lyapunov Exponents and Escape Rates\\
in Open Systems}

\author{Mark Demers\thanks{Department of Mathematics and Computer Science,
Fairfield University, Fairfield, USA.  Email: mdemers@fairfield.edu.  This research
is partially supported by NSF grant DMS-0801139.}
\and Paul Wright\thanks{Department of Mathematics, University of Maryland, College Park, USA.  Email: paulrite@math.umd.edu.}
\and Lai-Sang Young\thanks{Courant Institute of Mathematical Sciences, New York University, New York, USA.  Email: lsy@cims.nyu.edu.}
}

\maketitle

\begin{abstract}
We study the relation between escape rates and pressure in general dynamical systems with holes, where pressure is defined to be the difference between entropy and the sum of positive Lyapunov exponents.
Central to the discussion is the formulation of a
class of invariant measures supported on the survivor set over which we take the supremum to measure the pressure.
Upper bounds for escape rates are proved for general
diffeomorphisms of manifolds, possibly with singularities, for arbitrary holes
and natural initial distributions including Lebesgue and SRB measures.
Lower bounds do not hold in such generality, but for systems admitting
Markov tower extensions with spectral gaps, we prove the
equality of the escape rate with the absolute value of the pressure and the existence of
an invariant measure realizing the escape rate,
i.e.\ we prove a full variational principle.
As an application of our results, we prove a variational principle for the billiard map associated with a planar Lorentz gas of finite horizon with holes.
\end{abstract}

%\huge

\section{Introduction}

This paper is about {\it leaky dynamical systems} or dynamical systems with
{\it holes}. A generic setup consists of a triple $(f, M; H)$ where
$M$ is the phase space of a map or flow denoted by $f$, and
$H \subset M$ is an open set. We refer to $(f,M)$ as a {\it closed system} and
$H$ as the hole through which
mass is allowed to escape from the system.
More precisely, we follow trajectories in $M$ until they enter $H$.  Once a
point enters $H$, it leaves the system forever, i.e. we stop considering it.

Holes can be large or small. Small holes are often used to model small
(unintended) leaks in physical systems; proximity of normalized surviving
distributions to the physical measure of the closed system is a form of stability. More generally, the study of
$(f, M; H)$ can be viewed as the study of dynamics on {\it non-invariant domains}. As an example of why such studies
are relevant, consider the following. It is well known that attractors
are important because they capture the large-time behavior of dynamical systems,
but invariant sets  that are not attracting can
substantially impact the qualitative behavior of a system as well: Let
$\Lambda \subset M$ be such a set, and $U \subset M$ a neighborhood of
$\Lambda$. Then we may regard $H = M \setminus \overline U$ as the hole.
Slow escape rates from such holes are known to impact the speed of correlation
decay of the closed system.

Escape dynamics have been studied by many authors. We refer the reader to the
part-review article \cite{demers young}, which contains many references, and will mention
explicitly works that are closer to the present paper as we go along.
Most previous works have focused on specific systems, such as Anosov diffeomorphisms,
interval and billiard maps. In this paper, we seek a general understanding
 for as large a class of dynamical systems as we can.
Specifically, we seek to relate escape rate to a dynamical invariant
called  {\it pressure}, which
roughly speaking measures the discrepancy between metric entropy
and sum of positive Lyapunov exponents. We now proceed to a discussion
of what this paper is about.

\bigskip
\noindent {\bf Setting and questions}

\smallskip

We begin with the simpler setting of a compact Riemannian manifold $M$
without boundary and a diffeomorphism $f$  which is at least $C^{1+\epsilon}$
for some $\epsilon>0$. In order to include applications to systems such as
billiards, which are very important examples of dynamical systems of
physical origin, we also allow $M$ to be the union of a (possibly open)
Riemannian manifold
and a singularity set $\Si$, and $f$ to be piecewise smooth.
Precise conditions on $\Si$ and the behavior of $f$ near it will be
introduced in Section 2. Riemannian measure on $M$ (or $M \setminus \Si$)
is denoted by $\mu$ throughout. Unless otherwise stated, the hole
$H$ is an arbitrary open set in $M$.

Let $m$ be a reference measure on $M$. We think of $m$ as the initial
distribution of mass in the phase space before any escape takes place, and
take the view that initial distributions related to $\mu$ are of particular physical interest. Notice that $m$ need not be $f$-invariant. Indeed one can
interpret the situation as follows: The escape of mass can begin before
or after the closed system $f: M \circlearrowleft$ reaches a steady state.
In the first case,
$m$ is usually not invariant, and we assume it has a density with respect to $\mu$.
In the second case, we take $m$ to be an SRB measure, which may be singular
with respect to $\mu$.

A basic quantity of interest is the
{\it escape rate}, defined to be $-\rho(m)$ where
\begin{equation}
\label{eq:escape def}
\rho(m) \ = \ \lim_{n \to \infty} \frac{1}{n} \log m(M^n)\
\end{equation}
when the limit exists.
Here $M^n = \cap_{i=0}^n f^{-i}(M \setminus H)$ is the set of points which has
not escaped by time $n$. In general, the limit in (\ref{eq:escape def})
may not exist, and we write
$\underline \rho$ and $\overline \rho$
for the $\liminf_{n \to \infty}$ and $\limsup_{n \to \infty}$ of the quantity
on the right hand side. Notice that while $\rho(m)$ depends on $m$,
all initial distributions uniformly equivalent to $m$ have the same escape
rate, i.e.  if $\vf$ is a function with
$\frac{1}{c} \le \vf \le c$ for some $c>0$, then $\rho(\vf m) = \rho(m)$,
and the same is true for $\underline \rho$ and $\overline \rho$.

For an $f$-invariant Borel probability measure $\nu$ on $M$, the {\it pressure}
of $\nu$, denoted $P_\nu$, is defined to be
$$P_\nu \ = \ h_\nu(f) - \int \lambda^+ d\nu
$$
where $h_\nu(f)$ is the metric entropy of $(f, \nu)$ and $\lambda^+$ is the
the sum of the positive Lyapunov exponents counted with multiplicity.
We will write
$\pa_{\cal G} = \sup_{\nu \in {\cal G}}  P_\nu$
where $\cal G$ is a collection of invariant measures.

Given an open system $(f,M; H)$, we define the {\it survivor set}
to be the $f$-invariant set
$\Omega := \cap_{n \in {\mathbb Z}} f^n(M \setminus H)$.\footnote{If
$f$ is not invertible, we take $n \leq 0$ in the definition of $\Omega$.}
Let $\I = \I(\Omega)$ denote
the set of $f$-invariant
Borel probability measures supported on $\Omega$,
and let $\E \subset \I$ be the subset of $\I$ consisting of ergodic measures.
Assuming $\rho(m)$ is well defined, we say $\rho(m)$ satisfies a
{\it variational principle} if
$$
\rho(m) = \pa_{\cal G} \qquad {\rm for  \ a \ suitable  \ class \ of \ measures}
 \ \cal G \subset \I \ .
$$
Of interest also is whether
 the supremum in $ \pa_{\cal G}$ is attained, i.e. if there is a measure
$\nu \in {\cal G}$ for which $P_\nu =  \pa_{\cal G}$. Obviously, one can also ask
if $\rho(m)=P_\nu$ for some $\nu$
without mentioning any variational principles.

The ideas in the last paragraph were suggested by a number of previously
known results some of which are recalled below, but let us first summarize the
questions to be addressed.

\bigskip
This paper seeks to address for as large a class of dynamical systems
as possible the following three questions for natural initial distributions $m$:

{\it
\begin{itemize} \vspace{-6 pt}
\item[{\bf Q1}] {\bf (Escape rate)} \
Is the escape rate $- \rho(m)$ well defined?
\vspace{-6 pt}
\item[{\bf Q2}] {\bf (Formula for escape rate)}
Is $\rho(m) = h_\nu(f) - \int \lambda^+ d\nu$ for some $\nu \in \I$?

\vspace{-3 pt}
The same question can be posed for  $\underline \rho(m)$ and
$\overline \rho(m)$.
\vspace{-6 pt}
\item[{\bf Q3}] {\bf (Variational principle)} \ Does $\rho(m)$ satisfy a variational principle?
\end{itemize}
}
\vspace{-3 pt}
Partial answers are given for very general dynamical systems, and complete
answers for a more restricted class which includes many known examples.
 A concrete application to the leaky periodic Lorentz gas is mentioned explicitly.

\bigskip

\noindent {\bf Earlier works}

\begin{theorem} {\rm \cite{bowen}} Consider a $C^{1+\epsilon}$ Axiom A
diffeomorphism $f:M\circlearrowleft$  of a compact Riemannian manifold $M$.
Let $\Lambda \subset M$ be a basic set, and let $\I=\I(\Lambda)$.
Then $\pa_{\I} \le 0$,
and $\pa_\I =0$ if and only if $\Lambda$ is an attractor.
\end{theorem}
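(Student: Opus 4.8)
The plan is to reduce the statement to classical facts about equilibrium states on basic sets. The first observation is that on a hyperbolic set the positive Lyapunov exponents are exactly those coming from the unstable subbundle $E^u$, which varies H\"older-continuously over $\Lambda$ because $f$ is $C^{1+\epsilon}$; hence, writing $J^u := |\det(Df|_{E^u})|$, the multiplicative ergodic theorem gives $\int \lambda^+ \, d\nu = \int \log J^u \, d\nu$ for every $\nu \in \I$, where $\log J^u$ is a H\"older (in particular continuous) function on the compact set $\Lambda$. Ruelle's inequality then gives $P_\nu = h_\nu(f) - \int \log J^u\, d\nu \le 0$ for all $\nu \in \I$, so $\pa_{\I} \le 0$; this handles the first assertion.

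For the dichotomy I would invoke the variational principle for topological pressure applied to $(f|_\Lambda, -\log J^u)$, which yields $\pa_{\I} = \sup_{\nu \in \I}\big(h_\nu(f) + \int(-\log J^u)\,d\nu\big) = P_{\mathrm{top}}(f|_\Lambda,-\log J^u)$. Since $\Lambda$ is a basic set, $f|_\Lambda$ is expansive and has the specification property, so this pressure is attained at a unique equilibrium state $\mu_-$, the u-SRB (or u-Gibbs) measure, whose conditional measures on local unstable manifolds are equivalent to the induced Riemannian volume. Thus $\pa_{\I} = P_{\mu_-}$, and the theorem is equivalent to the assertion that $P_{\mu_-} = 0$ if and only if $\Lambda$ is an attractor.

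If $\Lambda$ is an attractor, pick an isolating neighborhood $U$ with $f(\overline U) \subset U$; then $\cap_{i=0}^n f^{-i}U = U$ for every $n$, so the escape rate of $\mu|_U$ is $0$. The Bowen--Ruelle volume lemma --- $\mu\big(\cap_{i=0}^n f^{-i}U\big)$ is comparable, up to a multiplicative constant, to $\sum_{x \in E_n} e^{S_n(-\log J^u)(x)}$ over a maximal $(n,\epsilon)$-separated set $E_n \subset \Lambda$ --- identifies this escape rate with $-P_{\mathrm{top}}(f|_\Lambda,-\log J^u)$, so $\pa_{\I} = 0$. (Equivalently, on an attractor $\mu_-$ is the genuine SRB measure and Pesin's entropy formula gives $h_{\mu_-}(f) = \int \lambda^+ \, d\mu_-$ directly.) Conversely, if $\Lambda$ is not an attractor, the same volume lemma reduces $\pa_{\I} < 0$ to showing that from any isolating neighborhood $U$ of $\Lambda$ mass escapes at a strictly positive exponential rate, i.e.\ $\limsup_n \frac1n \log \mu(\cap_{i=0}^n f^{-i}U) < 0$. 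Here I would combine the fact that a non-attracting basic set has $\mu(W^s(\Lambda)) = 0$ with the uniform local product structure: off $W^s(\Lambda)$ points are pushed out of $U$ along unstable directions, and uniform hyperbolicity forces this to happen with a definite loss of mass within a bounded number of iterates, which gives the exponential bound.

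The main obstacle is exactly this last step --- promoting ``$\Lambda$ is not an attractor'' to a \emph{quantitative, uniform} escape estimate. The soft statement that $W^s(\Lambda)$ is Lebesgue-null is not by itself enough; one needs the distortion control supplied by the volume lemma together with a shadowing / local-product-structure argument that a fixed fraction of any thin tube around $\Lambda$ leaves $U$ in a fixed time, and this is where uniform hyperbolicity of the basic set is essential. The remaining ingredients --- Ruelle's inequality, the variational principle for topological pressure, uniqueness of equilibrium states under expansiveness plus specification, and Pesin's formula for SRB measures on attractors --- are standard and can be cited.
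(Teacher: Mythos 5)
The paper itself offers no proof of this statement: it is quoted directly from Bowen \cite{bowen}, with only the remark that $\pa_\I$ coincides with the topological pressure of $-\log|\det(Df|_{E^u})|$ on $\Lambda$. So your proposal must be measured against the classical argument. Up to a point it is correct and follows that route: on a uniformly hyperbolic set the sum of positive exponents integrates to $\int \log J^u\,d\nu$, Ruelle's inequality gives $\pa_\I \le 0$, the variational principle identifies $\pa_\I$ with $P_{\mathrm{top}}(f|_\Lambda, -\log J^u)$, and for an attractor the SRB measure with Pesin's entropy formula (or the volume-lemma/escape-rate argument) yields $\pa_\I = 0$. (One small factual slip: for a non-attracting basic set the equilibrium state of $-\log J^u$ does \emph{not} have conditionals equivalent to unstable leaf volume --- $\Lambda \cap W^u_{loc}$ is then a leaf-volume-zero Cantor set --- that description is only correct in the attractor case; this does not affect your logic.)

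The genuine gap is the one you flag yourself: the direction ``$\Lambda$ not an attractor $\Rightarrow \pa_\I < 0$.'' Your plan is to prove directly that Lebesgue mass escapes an isolating neighborhood $U$ at a strictly positive exponential rate and convert this into $P_{\mathrm{top}} < 0$ via the volume lemma. The conversion is fine, but the uniform escape estimate is precisely the hard content of the theorem, and the mechanism you offer --- $\mu(W^s(\Lambda)) = 0$ together with ``a fixed fraction of any thin tube around $\Lambda$ leaves $U$ in a fixed time'' --- is asserted, not proved; it is unclear how to obtain that uniformity without already having a Gibbs/pressure estimate in hand, and Lebesgue-nullity of $W^s(\Lambda)$ (itself one of the equivalences in the same theorem of Bowen) carries no rate information. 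Bowen's actual argument sidesteps this by running the contrapositive: if $P_{\mathrm{top}}(f|_\Lambda, -\log J^u) = 0$, then the Gibbs property of the equilibrium state for $-\log J^u$ combined with the volume lemma forces $\mu(W^s(\Lambda)) > 0$, and a density-point/local-product-structure argument then shows $W^u(x) \subset \Lambda$ for every $x \in \Lambda$, i.e.\ $\Lambda$ is an attractor. To complete your write-up you must either supply a genuine uniform escape estimate (essentially reproving the statement quoted as Theorem 2 from \cite{young large d}) or switch to this contrapositive argument; as it stands the converse direction is incomplete.
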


This is the first result that systematically
relates the escape of mass to pressure: In the case where $\Lambda$
is an Axiom A attractor,
no mass can escape from a neighborhood of $\Lambda$,
and $\pa_{\I} = 0$; for non-attracting basic sets such as horseshoes,
mass escapes at exponential rates and $\pa_{\I} < 0$.
The number $\pa_{\I}$ has been shown to be equal to the
{\it topological pressure} of $f$ with respect to the potential
$-\log |\det(Df^u)|$ on $\Lambda$; see \cite{bowen} or \cite{walters} for more detail.

The next result gives
conditions under which the numerical value of $\pa_{\I}$ is explicitly related
to the rate of escape.

\begin{theorem} {\rm \cite[Theorem 4]{young large d}}\footnote{This result follows
from the large deviation results in Theorem 1 (not Theorem 2) of \cite{young large d}.
Take $\vf \equiv 1$
on a closed set $K$ and $<1$ on $M \setminus K$ where
$\Omega \subset {\rm int}(K) \subset K \subset M \setminus H$,
and $\xi \approx -\log |\det(Df|_{E^u})|$ on $\Omega$.}
Let $f:M\circlearrowleft$ be a $C^{1+\epsilon}$
diffeomorphism of a compact Riemannian manifold $M$, and let
$H \subset M$ be an open set. We assume

\smallskip
(i) $\Omega$ is compact with $d(\Omega, \partial H)>0$, and

(ii) $f|_\Omega$ is uniformly hyperbolic.

\smallskip
\noindent Then $\rho(\mu)$ is well defined and equals $\pa_{\I}$.
\end{theorem}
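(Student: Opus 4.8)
The plan is to establish separately the two inequalities $\brho(\mu) \le \pa_\I$ and $\underline{\rho}(\mu) \ge \pa_\I$; together they show that the limit in (\ref{eq:escape def}) exists and equals $\pa_\I$. Both estimates are routed through Theorem~1 above: since $f$ is $C^{1+\epsilon}$ and $f|_\Omega$ is uniformly hyperbolic, the unstable bundle over $\Omega$ is H\"older, so the geometric potential $\psi := -\log|\det(Df|_{E^u})|$ is H\"older on $\Omega$ with $\psi \le -\kappa < 0$ for some $\kappa > 0$, and $\pa_\I$ equals the topological pressure $P_{\mathrm{top}}(f|_\Omega,\psi)$ (with $P_{\mathrm{top}}$ over a non-transitive set understood as the maximum over its transitive components). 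It thus suffices to compare the exponential growth rate of $\mu(M^n)$ with $P_{\mathrm{top}}(f|_\Omega,\psi)$.

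For the upper bound, the first ingredient is a confinement property: because $\Omega=\bigcap_{n\in\mathbb{Z}}f^n(M\setminus H)$ and $d(\Omega,\partial H)>0$, for each neighborhood $N$ of $\Omega$ with $\overline{N}\subset M\setminus H$ there is an integer $T=T(N)$ such that every orbit segment $x,fx,\dots,f^{2T}x$ lying in $M\setminus H$ has $f^Tx\in N$; consequently every $x\in M^n$ satisfies $f^ix\in N$ for all $T\le i\le n-T$. After shrinking $N$ and choosing a scale $\delta>0$ one obtains a $Df$-invariant unstable cone field, hence uniform hyperbolicity, on $N$. This yields the volume bound $\mu(B_n(x,\delta))\le C\,e^{S_n\psi_\delta(x)}$ for the Bowen ball $B_n(x,\delta)=\{y: d(f^ix,f^iy)<\delta,\ 0\le i\le n\}$ of any $x$ whose central orbit segment lies in $N$, where $S_n$ denotes Birkhoff sums, $\psi_\delta$ is a continuous function on $M$ that $\delta$-approximates $\psi$ near $\Omega$ (the cone Jacobian), and the $2T$ uncontrolled end iterates are absorbed into $C$ by compactness. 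Covering $M^n$ by $\{B_n(x,\delta):x\in E_n\}$ for a maximal $(n,\delta)$-separated $E_n\subset M^n$ and summing gives $\mu(M^n)\le C'\sum_{x\in E_n}e^{S_n\psi_\delta(x)}$; the growth rate of the right side is at most the topological pressure of $\psi_\delta$ over the maximal invariant subset of $\overline{N}$, which decreases to $P_{\mathrm{top}}(f|_\Omega,\psi)=\pa_\I$ as $N\downarrow\Omega$ and $\delta\to0$ (using local maximality of $\Omega$ and upper semicontinuity of pressure). Hence $\brho(\mu)\le\pa_\I$.

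For the lower bound I would produce enough surviving Lebesgue mass shadowing orbits in $\Omega$. Passing to a component of the spectral decomposition of $\Omega$ on which $\pa_\I$ is attained (this only helps, since $\pa_\I$ is the maximum of the pressures over the components), fix $\delta$ much smaller than $d(\Omega,\partial H)$ and let $F_n\subset\Omega$ be a maximal $(n,2\delta)$-separated set. By shadowing and bounded distortion, the set of points of a fixed small cube in $M$ that $\delta$-shadow the orbit of $x\in F_n$ for $n$ steps is a ``tube'' contained in $M^n$ whose intersection with each local unstable leaf has induced measure $\asymp e^{S_n\psi(x)}$ and which has positive transverse (stable-direction) measure, so by Fubini $\mu(\mathrm{tube}(x))\gtrsim e^{S_n\psi(x)}$; these tubes are pairwise disjoint because $F_n$ is $(n,2\delta)$-separated. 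Summing, $\mu(M^n)\gtrsim\sum_{x\in F_n}e^{S_n\psi(x)}$, whose exponential growth rate is exactly $P_{\mathrm{top}}(f|_\Omega,\psi)=\pa_\I$ by the separated-set definition of topological pressure for the H\"older potential $\psi$ on a basic set. This gives $\underline{\rho}(\mu)\ge\pa_\I$, and combining the two bounds finishes the proof. Both bounds can alternatively be extracted from the large-deviation estimates of \cite{young large d} applied to $\psi_\delta$, as indicated in the footnote, and the (unique) equilibrium state of $(f|_\Omega,\psi)$ then supplies an invariant measure $\nu$ with $P_\nu=\rho(\mu)$.

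The step I expect to be the crux is the upper bound: coupling the confinement property to the volume/distortion estimate and then checking that the $(n,\delta)$-separated sum over $M^n$ cannot grow faster than the pressure of $\psi$ on $\Omega$ itself, i.e.\ that orbit segments wandering in $M\setminus(H\cup N)$ contribute negligibly as $N\downarrow\Omega$. This is precisely where the hypotheses $d(\Omega,\partial H)>0$ and uniform hyperbolicity of $f|_\Omega$ enter essentially, the latter forcing $\Omega$ to be the full, locally maximal hyperbolic invariant set in $M\setminus H$; without them the set of slow escapers need not organize around a single hyperbolic set and the clean identification with $\pa_\I$ breaks down.
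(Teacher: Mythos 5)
Your proposal is essentially sound, but it takes a genuinely different route from the paper, which does not prove this statement at all: it is quoted as Theorem 4 of \cite{young large d}, and the footnote records the intended derivation, namely apply the large deviation bounds of Theorem 1 of \cite{young large d} with a weight $\vf\equiv 1$ on a closed set $K$ satisfying $\Omega\subset{\rm int}(K)\subset K\subset M\setminus H$, $\vf<1$ off $K$, and $\xi\approx-\log|\det(Df|_{E^u})|$; the comparison with $\sup_{\nu}(h_\nu-\int\lambda^+d\nu)$ comes packaged in that theorem. You instead reconstruct the argument by hand: a compactness (``confinement'') lemma pushing long surviving orbit segments into a neighborhood $N$ of $\Omega$, invariant cones and uniform hyperbolicity on $N$, a Bowen--Ruelle type volume lemma $\mu(B_n(x,\delta))\asymp e^{S_n\psi(x)}$, and separated-set/pressure comparisons on the isolated hyperbolic set $\Omega$ (note that since $\overline N\subset M\setminus H$, the maximal invariant set in $\overline N$ is already $\Omega$, so no shrinking of $N$ is needed on that side). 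What your route buys is a self-contained, elementary proof in the spirit of Bowen; what the citation buys is precisely the step you yourself flag as the crux --- that the $(n,\delta)$-separated sum over $M^n$ grows no faster than $\sup_{\nu\in\I}\{h_\nu+\int\psi\,d\nu\}$ --- which is the content of the large-deviation upper bound and is proved there by the Misiurewicz-style empirical-measure argument; in a complete write-up you must either cite it or carry that argument out, since as written it is asserted rather than proved. Two smaller remarks: your lower-bound half is subsumed by the paper's Theorem A, because $d(\Omega,\partial H)>0$ gives $\G_H=\E$ and $\pa_\E=\pa_\I$; and the lower volume estimate for a non-attracting hyperbolic set requires genuine stable disks through points of the local unstable leaves (local product structure near $\Omega$, with $\delta$ below both the expansivity constant and $d(\Omega,\partial H)$), a standard but not purely ``by shadowing'' step.
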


In both of the settings above, $\pa_\I = \pa_\E$, and $\pa_{\I}=P_\nu$
for some $\nu \in \I$. (The latter follows from the
continuity of $x \mapsto \log |\det(Df|_{E^u})|$ and upper semicontinuity of
$\nu \mapsto h_\nu(f)$; see [B]). Thus for  uniformly hyperbolic survivor
sets $\Omega$ with $d(\Omega, H)>0$, {\bf Q1}--{\bf Q3} have all been
answered in the affirmative.

Several works went beyond Theorem 2 to give positive answers to
{\bf Q1} and {\bf Q2} in a number of situations, including Anosov diffeomorphisms
with Markov or small holes (with no requirement on $\Omega \cap \partial H$)
\cite{chernov mark1, chernov mark2, chernov mt},
uniformly expanding maps admitting Markov partitions \cite{collet},
piecewise expanding maps, and Collet-Eckmann maps of the interval with singularities \cite{bdm}.  {\bf Q3} was partially addressed in \cite{chernov mark1, collet, bdm}:
a variational principle was proved for an associated dynamical system, namely
the symbolic dynamics of the original map (but not for the map itself).

%%%%%%%%%%%%%%%%%%%%%%%%%%%%%%%%%%%%%%
%%%%%%%%%%%%%%%%%%%%%%%%%%%%%%%%%%%%%%
\section{Statement of Results}

Three sets of results are stated:
\begin{itemize} \vspace{-6 pt}
\item[--] Sect.~2.1 contains partial answers to
{\bf Q3}: lower bounds for $\underline \rho(m)$ are proved for very
general dynamical systems; no results on upper bounds
are reported.
\vspace{-6 pt}
\item[--] Sects.~2.2 and 2.3 provide complete
answers to {\bf Q1}--{\bf Q3} for systems admitting
Markov tower extensions with some additional conditions.
\vspace{-6 pt}
\item[--] These results are applied to the periodic Lorentz
gas with small holes (Theorem F).
\end{itemize}

%%%%%%%%%%%%%%%%%%%%%%%%%%%%%%%%%%%%%%

\subsection{Lower bounds on $\underline \rho(m)$ for general dynamical systems}
\label{upper bound}

Our results in this subsection will assert, in essence, that

\medskip
\centerline{\it for very general dynamical systems,
$\underline \rho(m) \ge \pa_\G$ for
reasonable choices of $\G$.}

\medskip
\noindent
Since $\pa_\G$ decreases with $\G$, this inequality is not meaningful
for $\G$ too small. Thus the selection of a suitable $\G$ is an important part of
the consideration.
We start with $\E$, the set of ergodic invariant measures supported on the survivor set $\Omega$. To obtain $\G$, restrictions will be placed on $\E$ on account of

\smallskip
I. the hole $H$,

II. the initial distribution $m$, and

III. singularities of the map $f$, if present.

\smallskip
\noindent We discuss these 3 types of restrictions separately.
The conditions we impose  are admittedly motivated by our proofs,
but the fact that they lead to
a full variational principle for a large class of dynamical systems (see Sect. 2.3)
suggests that these choices of $\G$ are reasonable.

\bigskip
\noindent {\it Remark.}
One should keep in mind that the escape rate is defined by $-\rho(m)$ when interpreting
the inequality $\underline \rho(m) \ge \pa_\G$.  Thus a lower bound for $\underline \rho(m)$
provides an upper bound of $|\pa_\G|$ for the escape rate.

\bigskip
In Paragraphs I and II below, $f:M\circlearrowleft$ is a $C^{1+\epsilon}$ diffeomorphism; systems with singularities are discussed in Paragraph III.
Throughout the paper, $B(x,r)$ denotes the ball of radius $r$ in $M$
centered at $x \in M$, and
$N_\ve(\cdot)$ denotes the $\ve$-neighborhood of a set in $M$.

\bigskip
\noindent {\bf I. Restrictions on $\G$ due to the hole $H$}

\medskip
\noindent
The following
definition gives a sense of which $\nu \in \E$ we think impact the escape rate. Define
\begin{eqnarray*}
\G_H & = & \{ \nu \in \E \mid \mbox{The following holds for  $\nu$-a.e. $x$: given any
$\gamma>0$,} \\
& & \mbox{\qquad $\exists r = r(x, \gamma)>0$ such that
$B(f^ix, re^{-\gamma i}) \subset M \setminus H$ for all $i \ge 0$} \}\ .
\end{eqnarray*}
Notice that if $\nu \in \E$ has the property that for some $C, \alpha >0$,
$\nu(N_\ve(\partial H)) \leq C \ve^\alpha$ for all $\ve >0$, then
$\nu$ is in $\G_H$ (see Sect. 4.2, Paragraph 4).

The definition of $\G_H$ can be relaxed in many ways; in particular,
it is not necessary for the entire ball $B(f^ix, re^{-\gamma i})$ to be in
$M \setminus H$. We mention one formulation,
leaving the reader to contemplate others:
Given $x \in M$, let $W^s_\ve(x)$ denote the local stable manifold of $x$ of
radius $\ve$.
We call an open set $O$ a {\em $W^s$-neighborhood of $x$} if
$O \cap W^s_\ve(x) \ne \emptyset$ for every $\ve>0$.
All of our results remain valid if

\bigskip
\noindent
\parbox{.1 \textwidth}
{\bf (O)}
\parbox[t]{.8 \textwidth}
{\it  in the definition of $\G_H$, $B(f^ix, re^{-\gamma i})$ is replaced by
$f^i(O) \cap B(f^ix, re^{-\gamma i})$
where $O$ is a $W^s$-neighborhood of $x$.}

\bigskip
\noindent
{\bf II. Restrictions on $\G$ due to the initial distribution $m$}

\medskip
Two types of initial distributions are considered.

\bigskip
\noindent {\bf (A) Initial distributions with densities, possibly localized}

\smallskip
Let $m=\mu_\vf = \vf \mu$ where $\vf \ge 0$
is in $L^1(\mu)$. For such an initial distribution, we consider
$$
\G_\vf = \{\nu \in \E : \exists \, c_\nu>0 \ {\rm and \ an \ open \ set} \ Z {\rm \ such \
that} \ \nu(Z) >0 \mbox{ and } \vf|_Z \geq c_\nu\}\ .
$$

\smallskip
\noindent {\bf Theorem A.} {\it Let $(f,M;H)$ be as above. Then

(i) $\underline \rho(\mu) \ge \pa_{\G_H}$;

(ii) more generally, $\underline \rho(\mu_\vf)
\ge \pa_{\G_H \cap \G_\vf}$.}

\bigskip
\noindent {\it Remark.} Clearly, $\G_\vf = \E$ if $\vf \ge c$ for some $c>0$;
thus (ii) reduces to (i). Here we permit $\vf$ to vanish on parts of $M$ provided
it is measurable with
ess inf$(\vf)>0$ on an open set of $M$. We do not claim that the restrictions
imposed on $\G_\vf$ are necessary, but  if the support of $\vf$ is localized in the phase space,
invariant measures supported elsewhere are clearly irrelevant since they cannot
be ``seen" by the initial distribution $\mu_\vf$.

\bigskip
\noindent
{\bf (B) SRB measures as initial distributions}

\smallskip
In (A), $m=\mu_\vf$ is not
necessarily an invariant measure.
If, however, a steady state is reached before the leak begins, then
it would be natural to take $m$ to be an SRB measure $\musrb$, as we now do.
For simplicity, we assume $\musrb$ has no zero Lyapunov exponents.

The challenge here is to identify a class of invariant measures $\Gsrb$ that
can be ``seen" by the SRB measure $\musrb$, which is often singular.
We call $\Pi \subset M$ a {\em $\musrb$-hyperbolic product set}
if the following hold.

\begin{enumerate}
  \item[(W.1)]  $\Pi = (\cup \Gamma^u) \cap (\cup \Gamma^s)$ where
  $\Gamma^{u} = \{ \omega \}$ and $\Gamma^{s} = \{ \omega' \}$
  are two sets of relatively open local unstable and stable manifolds such that
  each $\omega \in \Gamma^u$ intersects
  every $\omega' \in \Gamma^s$ in precisely one point.  In addition, there exist constants $C>0$,
  $\lambda < 1$ such that
  \[  \mbox{diam}(T^{-n}\omega) \le C \lambda^n \; \;  \forall \omega \in \Gamma^u
  \; \; \; \; \mbox{and} \; \; \; \;
  \mbox{diam}(T^n\omega') \le C \lambda^n \; \; \forall \omega' \in \Gamma^s ,
  \]
    where diam$( \cdot )$ denotes the diameter of the unstable or stable manifold.
  \vspace{-6 pt}
    \item[(W.2)]  $\musrb|_{\Pi}(A)>0$ for every relatively open
    $A \subset \Pi$.
    \vspace{-6 pt}
    \item[(W.3)]  There exists a constant $c_\Gamma >0$ such that for $\musrb$-a.e.\
    $\omega \in \Gamma^u$, the conditional probability of
  $\musrb$ on $\omega$ has density $\psi_\omega \geq c_\Gamma$.
\end{enumerate}
We remark that (W.3) is a general property of SRB measures [LY]; we have
listed it separately only for emphasis. Define
$$
\Gsrb \ = \ \{ \nu \in \E \mid \nu(\Pi)>0 \ \mbox{for a
$\musrb$-hyperbolic product set $\Pi$} \}  .
$$

\smallskip
\noindent {\bf Theorem B.} {\it Under the conditions above,
$\underline \rho(\musrb) \ge \pa_{\G_H  \cap \Gsrb}$.}

\bigskip
\noindent {\it Remark 1.} Observe that if $f$ has an Axiom A attractor $\Lambda$
and $\musrb$ is the
SRB measure on the attractor, then $\Gsrb$ imposes no restriction whatsoever
on $\nu \in \E$, i.e. $\Gsrb = \E$.

\medskip
\noindent {\it Remark 2.} In the case where the pushforward of
Lebesgue measure $\mu$ tends to $\musrb$, one might be tempted to
conclude
that $\rho(\mu)=\rho(\musrb)$. This is {\it not necessarily true}, and the
reason is as follows: Suppose
$f$ has a Lebesgue measure zero invariant set $\Lambda$
(such as a horseshoe) away from the support of the SRB measure.
The rate at which points escape from a neighborhood of $\Lambda$ will
be reflected in $\rho(\mu)$ but not in $\rho(\musrb)$; this can easily lead to
$\rho(\mu) > \rho(\musrb)$.

\bigskip \noindent
{\bf III. Restrictions on $\G$ due to the singularities of the map $f$}

\medskip
We state here a version of our results
that can be applied to planar billiards; see Theorem F below. Following \cite{katok}, we let
$U$ be an open smooth (at least $C^4$) finite dimensional
Riemannian manifold, and assume that $M = \overline U$ is a compact
metric space of finite capacity,\footnote{This means there is some $d<\infty$ such that
$
\limsup_{r \to 0} \frac{\log C(r)}{-\log r} = d
$
where $C(r)$ is the minimum cardinality of a covering of $M$ by open balls of
radius $r$. For billiards with corners the set $\overline U$ is technically
not a manifold with boundary but a union of such glued together
along some boundaries.} where $\overline U$ denotes the closure of $U$.

Let $\iota(x,U)$ be the radius of injectivity of the
exponential map exp$_x: T_xU \to U$.  We assume that there exist constants
$s, c_0, \varsigma>0$ such that for each $x,y \in U$ such that $d(x,y) < \iota(x,U)$
and $w = \mbox{exp}_x^{-1}(y)$, we have
\begin{equation}
\label{eq:exp}
\iota(x,U) \geq \min \{ s, d(x, M\setminus U)^\varsigma \} ,  \; \; \;
\| D (\mbox{exp}_x)(w) \| \leq c_0, \; \; \mbox{and} \; \; \| D(\mbox{exp}_x^{-1})(y) \| \leq c_0 .
\end{equation}

Let $V$ be an open subset of $U$ and let $f:V \to U$ be a mapping which is
a $C^2$ diffeomorphism of $V$ onto its image.
Let $\Si = M\setminus V$.  We think of $\Si$ as the singularity set of $f$.
We assume that
there exist constants $C_1, a>0$ such that for all $x \in V$,
\begin{equation}
\label{eq:d1 blowup}
\| Df_x \| \leq C_1 d(x,\Si)^{-a} \qquad \mbox{and} \qquad
\| Df^{-1}_x \| \leq C_1 d(x,f\Si)^{-a}.
\end{equation}
Let
$\hat{f}_x = \mbox{exp}_{fx}^{-1} \circ f \circ \mbox{exp}_x$
denote the induced map on $T_xV$ wherever it is defined.
We assume that there exists $b >0$ such that if $x \in V$, $v \in T_xV$ and
$\hat{f}_x(v)$ is well-defined, then
\begin{equation}
\label{eq:d2 blowup}
\| D^2 \hat{f}_x(v) \| \leq C_1 d(\mbox{exp}_x(v), \Si)^{-b} .
\end{equation}
Notice that for billiards with finite horizon, $a = 1$ and $b=3$
(see \cite{katok, chernov book}).  In what follows, we will assume without loss of
generality that $b \geq \varsigma \geq 1$.

Into such a system we introduce a hole $H \subset M$. With regard to the
choice of $\G$, in addition to the considerations above, we must also
restrict to invariant
measures that respect the singularities (see [KS]). Define
$$
\G_{\Si} = \{\nu \in \E \mid \exists C, \alpha>0 \
{\rm such \ that} \ \forall \ve>0, \nu(N_\ve(\Si)) \leq C\varepsilon^\alpha\}\ .
$$

\smallskip
\noindent {\bf Theorem C.} {\it  Let $(f,M;H)$ be as above. Then
\begin{enumerate} \vspace{-6 pt}
\item[(i)] for $\vf \in L^1(\mu)$, $\underline \rho(\mu_\vf) \ge \pa_{\G_H \cap \G_{\Si}
\cap \G_\vf}$, and
\vspace{-6 pt}
\item[(ii)] if $f$ has an SRB measure $\musrb$ with no zero Lyapunov exponents, then \\
$\underline \rho(\musrb) \ge \pa_{\G_H \cap \G_{\Si}
\cap \Gsrb}\ .$
\end{enumerate}
}

\bigskip
We finish with the following.

\medskip
\noindent {\bf Remarks on upper bounds and the attractor case:}
No general results are known for upper bounds on $\rho(m)$, not even for
$m=\mu$. Consider the special case where
$\Omega \subset M$ is an attractor. Assume there is a neighborhood
$O$ of $\Omega$ such that $f(\overline O) \subset O$ and
$\Omega = \cap_{n \ge 0} f^n(O)$.
Let $H = M \setminus \overline O$ and $m=\mu$,
so that $\rho(m)=0$ by definition.
Since $h_\nu(f) \le \lambda_\nu^+$ for all $\nu \in \E$ \cite{ruelle},
showing that $\rho(m) \le \pa_\G$ in this case is equivalent to proving
$\pa_\E =0$. The latter is known to be false in general, an example being the
Figure 8 attractor (see Fig.~1), so one must rephrase the question to include
some notion of ``typicality". Still, $P_\nu=0$ means either
$\lambda^+_\nu=0$ or $\nu$ is an SRB measure \cite{ledrappier young},
and whether attractors with nonuniform expansion admit SRB
measures is well known to be a very difficult question;
see e.g.\ \cite{young srb}.
Since any result on upper bounds for $\overline \rho(m)$ must include
this attractor case, we conclude that in complete generality
the question for upper bounds for $\overline \rho(m)$ (and lower bounds for escape rates)
is intractable at the present time.

\medskip
We will, however, identify a large class of dynamical
systems for which $\underline \rho(m)= \overline \rho(m) = \pa_\G$
for some $\G$. This is the content of Sects. 2.2 and 2.3.

\bigskip
\begin{center}
\includegraphics{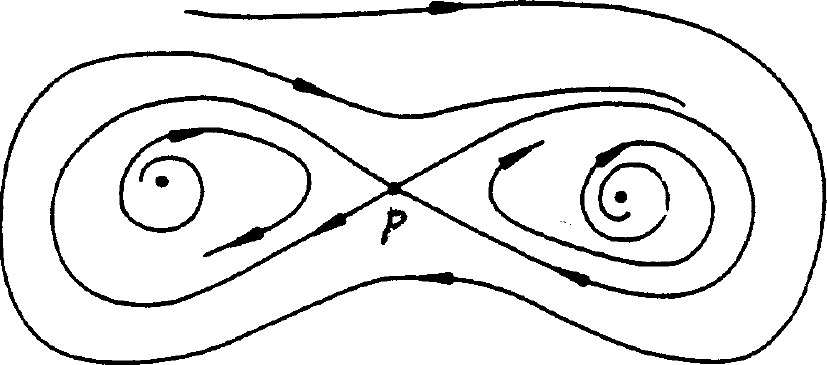}
\end{center}

\centerline{Fig.\ 1. Figure 8 attractor. The only invariant measure is $\delta_p$
where $p$  is the saddle point.}

%%%%%%%%%%%%%%%%%%%%%%%%%%%%%%%%%%%

\subsection{Escape rate formula}
\label{tower results}

In this section, we assert for a class of dynamical systems
the existence of $\hat \nu \in \E$ the pressure of which is equal to $\rho(m)$, thereby answering {\bf Q1} and
{\bf Q2} in the affirmative.

Let $f:M\circlearrowleft$ be
a $C^{1+\ve}$ diffeomorphism or a piecewise smooth diffeomorphism
as in the setting of Theorem C, and fix a hole $H \subset M$.
We assume

\medskip
{\bf (A.1)} $(f,M)$ has a {\it Markov tower extension} $(F, \Delta)$;

\medskip
{\bf (A.2)} $(F, \Delta)$ has an {\it exponential tail};

\medskip
{\bf (A.3)} $(F, \Delta)$ {\it respects the hole} $H$;

\medskip
{\bf (A.4)} the transfer operator on the ``tower with holes" has a {\it spectral gap}.

\medskip
\noindent  While {\bf (A.1)} and {\bf (A.2)} are by now quite standard,
and {\bf (A.3)} and {\bf (A.4)} have also appeared elsewhere,
it will take a few pages to make precise this entire formal setting;
we postpone that to Sect.~\ref{tower review}.  Let $\musrb$ denote
the (unique) ergodic SRB measure on $\pi(\Delta)$ where
$\pi: \Delta \to M$
is the projection, and let $\ra<1$ be
the leading eigenvalue of the transfer operator on the tower with holes.

We will use the following notation:
Let $m = m^{(0)}$ denote a probability measure on $M$.
For $n \ge 1$, let $m^{(n)}$ denote the normalized surviving distribution
at time $n$, i.e.\ $m^{(n)} = f^n_*(m|_{M^n})/m(M^n)$, assuming $m(M^n)>0$.
We call a measure $m$ conditionally invariant with eigenvalue
$t$ if $m$ is supported on $M\setminus H$ and $f_*(m|_{M^1}) = t \, m$.

\bigskip
\noindent {\bf Theorem D.} {\it Assume $(f,M;H)$ satisfies {\bf (A.1)}--
{\bf (A.4)}. Then
\vspace{-6 pt}
\begin{enumerate}
\item[(a)]  $\rho(\musrb)$ is well defined and equals $\log \ra$;
\vspace{-6 pt}
\item[(b)] $\musrb^{(n)}$ converges weakly to a conditionally invariant measure
$\mu_*$ with eigenvalue $\ra$;
\item[(c)]  there exists $\hat \nu \in \G_H \cap \G_\Si$ such that
$$\rho(\musrb) = P_{\hat \nu} := h_{\hat \nu}(f) - \lambda^+_{\hat \nu}\ ;$$
\item[(d)] $\hat \nu$ is defined by
$$
\hat \nu(\vf) = \lim_{n \to \infty} \ra^{-n} \int_{M^n} \vf \, d\mu_*  \qquad \mbox{ {\it for  all
continuous}} \ \vf\ .
$$
In addition, $\hat \nu$ enjoys exponential decay of correlations on H\"older observables.
\vspace{-6 pt}
  \end{enumerate}
  }

\bigskip
Our construction of $\hat \nu$ generalizes that in \cite{collet, chernov mark1},
which assume the maps in question admit finite Markov partitions. See
\cite{bdm} for the first
generalization in this direction regarding pressure for one-dimensional maps with holes.
Parts (a) and (b) of Theorem D are also known
for the periodic Lorentz gas \cite{dwy}. We assert here that these results hold generally for {\it any} dynamical system
admitting a tower with the stated conditions.

%%%%%%%%%%%%%%%%%%%%%%%%%%%%%%%%

\subsection{A full variational principle}

Combining the results of the previous two sections, we are able to state a full variational principle (answering {\bf Q1}--{\bf Q3} in Section 1)
for maps admitting towers with a spectral gap as described in Sect.~\ref{tower results}.
Let $\Lambda \subset M$ be the reference
hyperbolic product set which forms the base of the tower $\Delta$.

\bigskip
\noindent {\bf Theorem E.}  {\it Assume $(f,M;H)$ satisfies {\bf (A.1)}--
{\bf (A.4)},  and let $\hat \nu$ be as in Theorem D.
\begin{enumerate} \vspace{-6 pt}
\item[(a)]  If $\musrb = \vf \mu$ where $\vf \geq \delta >0$ on a neighborhood
of $\Lambda$,  then
$\hat \nu \in \G_H \cap \G_{\Si} \cap \G_\vf$ and
\[
\rho(\musrb) = P_{\hat \nu} = \pa_{\G_H \cap \G_{\Si} \cap \G_\vf} .
\]
\item[(b)] If $\Lambda$ is contained
in a $\musrb$-hyperbolic product set, then $\hat \nu \in
\G_H \cap \G_{\Si} \cap \Gsrb$ and
\[
\rho(\musrb) = P_{\hat \nu} = \pa_{\G_H \cap \G_{\Si} \cap \Gsrb} .
\]
  \end{enumerate}
}

\medskip
To our knowledge the condition in part (b) of Theorem E can be arranged in
all known tower constructions.

\bigskip
\noindent {\bf Remark on results for tower maps.} We will, as an intermediate step to proving
Theorems D and E, prove the corresponding results for tower maps
with Markov holes. These results are stated as Theorems 4 and 5 in
Sect. 5.2.

\bigskip
\noindent
{\bf An illustrative example: The 2D periodic Lorentz gas}

\smallskip
We conclude this section by stating an application of our results to a concrete example.
The setting here is as in \cite{dwy}:
Let $f:M\circlearrowleft$ be a billiard map
associated with a two dimensional periodic Lorentz gas with finite horizon
whose scatterers are bounded by $C^3 $ curves with strictly positive curvature.
The holes we introduce into $M$ are derived from two types of holes in the billiard table $X$.
We say $\sigma \subset X$ is a hole of Type I if $\sigma$ is an open segment of
an arc in the boundary of one of the scatterers in $X$.  We say $\sigma$ is a hole
of Type II if it is an open convex set in $X$ whose closure is disjoint from any of
the scatterers.  The hole $\sigma \subset X$ induces a hole $H_\sigma \subset M$
which we also call a hole of Type I or Type II. See \cite{dwy} for more general
holes and details on the geometry they induce in $M$.

\bigskip
\noindent {\bf Theorem F.} {\it Let $f$ be the billiard map in the last
paragraph. Let
$H_\sigma$ be a hole of Type I or Type II, and assume it is small enough in the sense of \cite{dwy}.  Then

\smallskip
(a) $\rho(\musrb)=\pa_{\G_H \cap \G_\Si}$;

\smallskip
(b) there exists $\hat \nu \in \G_H \cap \G_\Si$ such that $P_{\hat \nu} = \rho(\musrb)$.
}

\bigskip
Theorem F is an immediate consequence of Theorems D and E together
with \cite{dwy}:  In \cite{dwy}, towers with exponential tails respecting arbitrary holes
of Types I and II  are
constructed, and for small enough holes  the spectral gap property is guaranteed.
Thus the conditions for Theorem~D are satisfied; however, \cite{dwy} does not
address variational principles or pressure so that Theorem D, parts (c) and (d), as well
as Theorem F are new results for this class of billiards.

For the Lorentz gas, $\musrb = \vf \mu$ where $\vf = c \cos \theta$ so that we are in the
setting of Theorem~E(a); however, $\vf = 0$ only when $\theta = \pm \pi/2$ so that
$\G_\vf = \E$ since the set $\{ \theta = \pm \pi/2 \}$ does not contain any invariant sets
by the finite horizon condition and so cannot contain the support of any invariant
measure.

%%%%%%%%%%%%%%%%%%%%%%%%%%%%%%%%%%%%%%%%%%%

\section{Ideas Common to the Proofs of Theorems A--C}
\label{proof a-c}

In this section, we first give the ideas common to the proofs of
Theorems A--C. Let $f$ be the mapping in question, let $m$ be the reference
measure (i.e. $m = \mu_\vf$ in Theorem A, $m=\musrb$ in Theorem B, and so on),
and let $\G$ be the relevant set of ergodic invariant measures with respect
to which the pressure term is defined (i.e. $\G = \G_H \cap \G_\vf$ in Theorem A,
and so on). This ``generic" notation is used throughout Sect.~\ref{proof a-c}.

If $\G = \emptyset$, then $\pa_{\G} = -\infty$ and
the theorem is vacuously true. Consider $\nu \in \G$.  Leaving precision for later,
our proof will proceed as follows: For $n \ge 0$, we introduce
dynamical balls in $M^n$ of the form
\[
B(x,n,g)  = \{ y \in M : d(f^ix, f^iy) < g(f^ix), 0 \leq i \leq n \} \cap M^n
\]
where $M^n =  \cap_{i=0}^n f^{-i}(M \setminus H)$ and
$g:M \to \mathbb{R}^+$ is a suitable function to be specified (think of
it as $g \approx \varepsilon$ for the moment). We will prove

\medskip
\noindent {\it I. Relation to entropy:} $\nu(B(x,n,g)) \sim
e^{-nh_\nu(f)}$.

\medskip
\noindent {\it II. Volume estimate:} $m(B(x,n,g)) \gtrsim e^{-n\lambda_\nu^+}$
where $\lambda_\nu^+$ is the sum of positive Lyapunov

exponents for $\nu$-a.e.\ $x$.

\medskip
\noindent From Estimate I, we deduce that $M^n$ contains
$\gtrsim e^{nh_\nu(f)}$ disjoint sets of the type $B(x,n,g)$.
This together with Estimate II gives
\begin{equation}
\label{M bound}
m(M^n) \gtrsim e^{nh_\nu(f)} \cdot e^{-n\lambda_\nu^+} .
\end{equation}
Taking $\log$, dividing by $n$ and letting $n \to \infty$, gives
$\underline \rho(m) \ge h_\nu(f) - \lambda_\nu^+$, which is what we need.

We now proceed to make these ideas precise.

\bigskip
\noindent {\bf I. Relation to entropy.} For this part
we cite the following very general result.

\begin{proposition}
\label{prop:local entropy}
Let $\Phi : X \circlearrowleft$ be a measurable transformation of a compact metric
space of finite capacity,
and let $\theta$ be an ergodic invariant measure for $\Phi$.
Let $\hat g_\ve$ be a family of functions satisfying $|\hat g_\ve|_\infty \leq \ve$ and
$\int_X -\log \hat g_\ve \, d\theta < \infty$, and define
$\hat B(x,n,\hat g_\ve)  = \{ y \in X : d(\Phi^ix, \Phi^iy) < \hat g_\ve(\Phi^ix), 0
\leq i \leq n \}$. Then for $\theta$-a.e. $x$,
\[
\lim_{\ve \to 0^+}  \liminf_{n \to \infty} - \frac 1n \log \theta(\hat B(x,n,\hat g_\ve))
= \lim_{\ve \to 0^+}  \limsup_{n \to \infty} - \frac 1n \log \theta(\hat B(x,n,\hat g_\ve))
= h_\theta(\Phi)  .
\]
\end{proposition}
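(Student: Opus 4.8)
The plan is to recognize Proposition~\ref{prop:local entropy} as a variant of the Brin--Katok local entropy formula, adapted to the setting where the metric balls have radii that vary measurably from point to point rather than being uniformly of size $\ve$. The classical Brin--Katok theorem states that for an ergodic $\Phi$-invariant measure $\theta$,
\[
\lim_{\ve \to 0^+}\liminf_{n\to\infty} -\tfrac1n\log\theta(B(x,n,\ve)) = \lim_{\ve \to 0^+}\limsup_{n\to\infty} -\tfrac1n\log\theta(B(x,n,\ve)) = h_\theta(\Phi)
\]
for $\theta$-a.e.\ $x$, where $B(x,n,\ve)$ is the ordinary Bowen $(n,\ve)$-ball. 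So the task reduces to showing that replacing the constant $\ve$ by a function $\hat g_\ve$ with $|\hat g_\ve|_\infty \le \ve$ and $\int -\log \hat g_\ve\,d\theta < \infty$ does not change the limiting value.

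First I would establish the upper bound $\lim_{\ve\to 0^+}\limsup_n -\tfrac1n\log\theta(\hat B(x,n,\hat g_\ve)) \le h_\theta(\Phi)$. Since $\hat g_\ve(\Phi^i x) \le \ve$ for all $i$, we have the trivial inclusion $\hat B(x,n,\hat g_\ve) \subset B(x,n,\ve)$, hence $\theta(\hat B(x,n,\hat g_\ve)) \le \theta(B(x,n,\ve))$, and the upper bound follows directly from the upper-bound half of Brin--Katok (which actually holds without the ergodicity-free subtleties and without any integrability hypothesis). The work is therefore concentrated entirely in the lower bound: I must show that shrinking the ball radius in a point-dependent but $\theta$-integrably-controlled way cannot make $\theta(\hat B(x,n,\hat g_\ve))$ decay faster than $e^{-nh_\theta(\Phi)}$. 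The key step here is the following observation. Define, for $\delta>0$, the measurable set $A_\delta = \{y : \hat g_\ve(y) \ge \delta\}$; by the integrability hypothesis $\int -\log\hat g_\ve\,d\theta < \infty$, we have $\theta(A_\delta) \to 1$ as $\delta \to 0$. By the Birkhoff ergodic theorem applied to $\mathbf{1}_{X\setminus A_\delta}$, for $\theta$-a.e.\ $x$ the orbit of $x$ spends an asymptotic fraction $\le \theta(X\setminus A_\delta)$ of its time outside $A_\delta$. I would then use a Katok-type covering/counting argument: cover a large-measure set by Bowen balls $B(x,n,\delta)$, and on the sub-orbit of times where $\Phi^i x \in A_\delta$ the constraint $d(\Phi^i x, \Phi^i y) < \hat g_\ve(\Phi^i x)$ is implied by $d(\Phi^i x,\Phi^i y) < \delta$; the remaining (small-density) set of times can be absorbed into a controlled entropy error that vanishes as $\delta \to 0$. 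Quantitatively, one shows
\[
-\tfrac1n\log\theta(\hat B(x,n,\hat g_\ve)) \le -\tfrac1n\log\theta(B(x,n,\delta)) + (\text{error}(n,\delta,\ve)),
\]
where the error term is controlled by the Birkhoff average of $-\log\hat g_\ve$ restricted to the bad times, and I would invoke the maximal ergodic theorem or a standard truncation to ensure this error tends to $0$ in $n$ and then as $\delta \to 0$.

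The main obstacle I anticipate is handling the ``bad'' times $i$ at which $\Phi^i x \notin A_\delta$, i.e., where $\hat g_\ve(\Phi^i x)$ is small: at those times the dynamical ball $\hat B(x,n,\hat g_\ve)$ is genuinely thinner than $B(x,n,\delta)$, and one needs the integrability hypothesis $\int -\log\hat g_\ve\,d\theta < \infty$ to guarantee that the cumulative loss $\sum_{i : \Phi^i x \notin A_\delta} \log(\delta/\hat g_\ve(\Phi^i x))$ is $o(n)$ along the orbit. This is exactly where the Kac/Birkhoff machinery enters: the integrability assumption makes $-\log\hat g_\ve \in L^1(\theta)$, so its Birkhoff averages converge, and by choosing $\delta$ small the contribution from the set $\{-\log\hat g_\ve > \log(1/\delta)\}$ has small $\theta$-integral, hence small Birkhoff average. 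Once this is in place, sending $\ve \to 0$ (hence $\delta$ can be taken to $0$ as well, since $\limsup_\delta \theta(X\setminus A_\delta)=0$ independently of $\ve$ up to the $|\hat g_\ve|_\infty \le \ve$ bound) squeezes both the $\liminf$ and the $\limsup$ to $h_\theta(\Phi)$, completing the proof. I would also note that the finite-capacity hypothesis on $X$ is used exactly as in Katok's original argument, to control the number of Bowen balls needed to cover sets of nearly full measure; without it the counting estimates that convert covering numbers into entropy could fail.
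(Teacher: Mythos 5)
Your reduction has the two directions crossed, and the direction that carries all the content is exactly the one your sketch does not actually establish. From the inclusion $\hat B(x,n,\hat g_\ve)\subset B(x,n,\ve)$ you get $\theta(\hat B(x,n,\hat g_\ve))\le\theta(B(x,n,\ve))$, hence $-\frac 1n\log\theta(\hat B(x,n,\hat g_\ve))\ge-\frac 1n\log\theta(B(x,n,\ve))$; combined with Brin--Katok this yields the \emph{lower} bound $\lim_{\ve\to0^+}\liminf_n-\frac 1n\log\theta(\hat B(x,n,\hat g_\ve))\ge h_\theta(\Phi)$, not the upper bound you claim in your first step (an upper bound on $\theta(\hat B)$ can never bound the rate from above). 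As written, your ``first step'' and your ``main step'' address the same inequality, namely that the variable-radius ball is not exponentially smaller than $e^{-nh_\theta(\Phi)}$ --- the first by an invalid monotonicity argument, the second only as a sketch --- while the easy direction is the one the inclusion actually gives.

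For that hard direction the missing idea is the mechanism converting a pointwise radius deficit into a measure estimate. Knowing via Birkhoff that $\sum_{i\le n,\,\Phi^ix\notin A_\delta}\log\bigl(\delta/\hat g_\ve(\Phi^ix)\bigr)=o(n)$ gives no control on the ratio $\theta(B(x,n,\delta))/\theta(\hat B(x,n,\hat g_\ve))$ for a general invariant measure: with no doubling or local-dimension regularity, ``thinner by a small total log-factor'' does not imply ``smaller in measure by $e^{o(n)}$,'' so your displayed inequality with the unspecified error term is an assertion rather than a consequence of the covering you describe; moreover covers by ordinary Bowen balls $B(x,n,\delta)$ ignore the constraint at the bad times, and cardinality bounds on covers do not by themselves yield pointwise lower bounds on $\theta(\hat B(x,n,\hat g_\ve))$. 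The proof the paper invokes supplies exactly this mechanism: by Ma\~n\'e's Lemma 2 in \cite{mane}, finite capacity together with $\int_X-\log\hat g_\ve\,d\theta<\infty$ produces a countable partition $\mathcal{P}$ with $H_\theta(\mathcal{P})<\infty$ whose element $\mathcal{P}(y)$ is contained in $B(y,\hat g_\ve(y))$ for $\theta$-a.e.\ $y$; then the element of $\bigvee_{i=0}^n\Phi^{-i}\mathcal{P}$ containing $x$ lies in $\hat B(x,n,\hat g_\ve)$ for a.e.\ $x$, and Shannon--McMillan--Breiman gives $\limsup_n-\frac 1n\log\theta(\hat B(x,n,\hat g_\ve))\le h_\theta(\Phi,\mathcal{P})\le h_\theta(\Phi)$, with the remaining direction supplied by Brin--Katok \cite{brin katok} as above (whose proof does not use continuity of $\Phi$). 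If you insist on a covering-style argument you would need covers refined at the bad times --- essentially rebuilding Ma\~n\'e's partition, which is where finite capacity and the integrability hypothesis really enter --- plus a pigeonhole/Borel--Cantelli step to pass from the cardinality of the cover to a lower bound on the measure of the ball at a.e.\ point; neither ingredient appears in your sketch.
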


Proposition \ref{prop:local entropy} follows from \cite[Lemma 2]{mane}
and \cite[Main Theorem]{brin katok}.
Note that although \cite{brin katok} is phrased in terms of a continuous map,
the proof does not use this fact.

\medskip
In the proofs of Theorems A--C, Proposition \ref{prop:local entropy} will
be applied with $\Phi=f$, $\theta = \nu \in \G$ and
\[
\hat g_{\ve}(x):= \min\{\ve,d(x,\Si)\}
\]
($\Si = \emptyset$ in Theorems A and B).  Observe that intersecting
$\hat B(x,n,\hat g_\ve)$ with $M^n$ does not
affect its $\nu$-measure since $\nu$ is supported on the survivor set.
From $\nu (N_\ve(\Si)) \leq C\ve^\alpha$, we have
\begin{equation}
\label{eq:g int}
\begin{split}
\int_M -\log (\hat g_{\ve}) \, d\nu
& \leq -\log \ve + \sum_{n=0}^\infty \nu(N_{\ve e^{- n}}(\Si)\setminus
N_{\ve e^{-(n+1)}}(\Si)) (n+1
	- \log \ve) \\
& \leq -\log \ve + \sum_{n=0}^\infty C\ve^\alpha e^{-\alpha n} (n+1
- \log \ve) < \infty
\end{split}
\end{equation}
so our $\hat g_{\ve}$ satisfies the hypotheses of Proposition~\ref{prop:local entropy}.

\bigskip
\noindent {\bf II. Volume estimate.} Let $g_\ve = \frac13 \hat g_\ve$.
Continuing to let $m$ denote the initial distribution  and $\nu \in \G$,
we state the following desired volume estimate:

\begin{proposition}
\label{prop:volume est}
There exists a measurable set $E \subset \Omega$ with $\nu(E) > 0$ such that
for $\nu$-a.e.~$x \in E$,
\begin{equation}\label{vol}
\sup_{\ve > 0} \limsup_{n \to \infty}
- \frac 1n \log m(B(x,n, g_{\ve}))
\leq \lambda_\nu^+ .
\end{equation}
\end{proposition}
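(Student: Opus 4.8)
The plan is to construct $E$ and verify \eqref{vol} by tracking how the reference measure $m$ distributes across a dynamical ball $B(x,n,g_\ve)$, using the fact that along a $\nu$-typical orbit the local unstable manifold grows at rate $\lambda_\nu^+$ while contraction in the stable and center directions only helps. The natural choice of $E$ depends on the case: for $m = \mu_\vf$ (Theorems A and C(i)) one takes $E$ to be a positive-$\nu$-measure subset of the open set $Z$ on which $\vf \ge c_\nu$, intersected with a regular set (defined below); for $m = \musrb$ (Theorems B and C(ii)) one takes $E$ to be a positive-$\nu$-measure subset of a $\musrb$-hyperbolic product set $\Pi$, so that (W.2) and (W.3) let one compare $\musrb$ on $\Pi$ with Lebesgue measure along unstable leaves. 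In both cases the membership $\nu \in \G_\vf$ (resp. $\nu \in \Gsrb$) is exactly what guarantees $\nu(E) > 0$.

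The core estimate is the volume lower bound. First I would invoke Oseledec together with Pesin/Lyapunov-chart theory (or, in the singular case, the Katok–Strelcyn version, which is why $\G_\Si$ enters) to fix, for $\nu$-a.e.\ $x$, a Lyapunov regular point with well-defined Oseledec splitting and subexponential distortion. On a set $E$ of nearly full $\nu$-measure the relevant constants are uniform. For such $x$, the image $f^n\big(B(x,n,g_\ve) \cap W\big)$, where $W$ is a small piece of unstable manifold (or a $W^s$-neighborhood as in (O)), contains a ball in $f^n(W)$ of radius comparable to $\ve$ (using that $g_\ve = \tfrac13 \hat g_\ve$ and that the middle-third shrinkage absorbs the bounded distortion and the $d(\cdot,\Si)$-factor), while the inverse branch $f^{-n}$ contracts volume along the unstable direction by at most $e^{n(\lambda_\nu^+ + \eta)}$ for any $\eta > 0$ once $n$ is large. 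Pushing $m$ forward: in the density case, $m = \vf\mu \ge c_\nu \mu$ on $Z \supset$ the relevant ball, so $m(B(x,n,g_\ve)) \gtrsim c_\nu \cdot e^{-n(\lambda_\nu^+ + \eta)}$; in the SRB case, (W.3) gives $\musrb$ on the leaf through $x$ a density $\ge c_\Gamma$ with respect to the leaf's Riemannian volume, and (W.2) ensures positivity, yielding the same bound. Taking $\frac1n\log$, letting $n\to\infty$ and then $\eta \to 0$ gives \eqref{vol}; the $\sup_{\ve>0}$ is harmless since the bound degrades only through an $\ve$-independent exponential rate.

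The main obstacle is making the volume comparison uniform and honest in the presence of singularities and nonuniform hyperbolicity simultaneously. Concretely: the Lyapunov charts have size that shrinks along the orbit (subexponentially), and near $\Si$ the derivative blows up polynomially by \eqref{eq:d1 blowup}–\eqref{eq:d2 blowup}; one must check that the orbit of a $\nu$-typical $x$ approaches $\Si$ slowly enough — which is precisely guaranteed by $\nu \in \G_\Si$ via a Borel–Cantelli argument on $\nu(N_\ve(\Si)) \le C\ve^\alpha$, mirroring the computation in \eqref{eq:g int} — so that the accumulated distortion from the singular factors is subexponential and can be folded into the $e^{n\eta}$ slack. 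A secondary subtlety is that $B(x,n,g_\ve)$ is a full-dimensional object while the clean growth estimate lives on unstable leaves; one handles this by foliating a neighborhood of $x$ by local unstable manifolds (absolute continuity of the stable foliation, or the product structure of $\Pi$) and integrating the leafwise bound, losing only a bounded factor. I would isolate these distortion/recurrence bounds as a lemma before assembling the proof of Proposition~\ref{prop:volume est} itself.
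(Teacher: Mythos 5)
Your choice of $E$, the use of (W.2)--(W.3) in the SRB case, and the Borel--Cantelli treatment of the singularity set (which is exactly the paper's Lemma~\ref{lem:approach}) all match the paper, but there are two genuine gaps. First, you never deal with the hole. By definition $B(x,n,g_\ve)$ is intersected with $M^n$, so a lower bound requires a large-$m$-measure set of points that shadow the orbit of $x$ \emph{and} avoid $H$ up to time $n$. The orbit of $x$ itself misses $H$, but a point within $g_\ve(f^ix)\approx \ve/3$ of $f^ix$ may perfectly well lie in $H$, since $H$ is an arbitrary open set whose boundary can approach the orbit; your claim that $f^n\bigl(B(x,n,g_\ve)\cap W\bigr)$ contains a ball of radius comparable to $\ve$ counts points that need not belong to $B(x,n,g_\ve)$ at all. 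The standing hypothesis $\nu\in\G_H$, which your proposal never invokes, is precisely what repairs this: for $\nu$-a.e.\ $x$ and any $\gamma>0$ the balls $B(f^ix,re^{-\gamma i})$ miss $H$, so the paper replaces $B(x,n,g_\ve)$ by the smaller set $B^*(x,n,\ve,\gamma)$ of points shadowing at exponentially shrinking distance $\ve e^{-\gamma i}$, and the resulting extra factor $e^{-k\gamma n}$ in the volume bound is absorbed into the slack $\kappa$. Some device of this kind is indispensable.

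Second, your reduction of the full-dimensional estimate to unstable leaves via ``foliating a neighborhood of $x$ by local unstable manifolds and absolute continuity of the stable foliation'' does not work for the Lebesgue/density case in this generality. Here $\nu$ is an arbitrary ergodic measure on the survivor set (think of $\nu$ supported on a horseshoe, or with zero exponents): Pesin unstable leaves exist only through $\nu$-regular points, their union typically has zero $\mu$-measure, and when zero exponents are present stable and unstable leaves do not even span; so integrating a leafwise growth bound over that lamination cannot bound $\mu(B(x,n,g_\ve))$ from below. The paper's proof avoids this entirely: inside a Lyapunov chart it foliates the whole box by \emph{flat planes parallel to} $R^{cu}$ (graphs of constant functions), shows by graph-transform and overflowing estimates that each such plane plays the role of an unstable disk for the finite-time estimate, bounds $|\det[D\tf^n_x|_{T_{g_0}}]|$ by $e^{n(\lambda^++3k\delta)}$ along the transformed graphs, and integrates over the $(d-k)$-parameter family of planes to obtain the Lebesgue bound \eqref{bound2}, which is then pushed back to $M$ by the chart distortion. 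Your leafwise argument is essentially the paper's treatment of the SRB initial distribution (where one does integrate over the actual leaves of $\Pi$ using (W.3)), but it does not deliver the cases $m=\mu$ or $m=\mu_\vf$ of Theorems A and C(i).
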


\medskip
\noindent {\bf Proof of Theorems A--C assuming
Proposition \ref{prop:volume est}:} Let $\nu \in \G$ be given. We
fix $\delta > 0$, and let $\sigma := \nu(E)$ where
$E$ is as in Proposition~\ref{prop:volume est}.
Using Propositions~\ref{prop:local entropy}  and~\ref{prop:volume est},
we may choose first $\ve>0$ sufficiently small, and then $n_0 = n_0(\delta, \ve)
\in {\mathbb Z}^+$ sufficiently large
and a measurable set $E' \subset E$ with $\nu(E') \geq \sigma/2$
such that for every $x \in E'$,
\begin{enumerate}
  \item[(i)] $\nu(B(x,n, 3g_{\ve})) \leq e^{-n(h_\nu - \delta)}$ for all
  	$n \geq n_0$; \vspace{-6 pt}
  \item[(ii)] $m(B(x,n, g_{\ve})) \geq e^{-n(\lambda^+_\nu + \delta)}$ for all
  	$n \geq n_0$. \vspace{-2 pt}
\end{enumerate}

For $n \geq n_0$, let $\C_n \subset E'$ be a maximal set of points
such that $B(x_i,n, g_{\ve}) \cap B(x_j,n, g_{\ve})
=\emptyset$ whenever $x_i, x_j \in \C_n$, $x_i \neq x_j $.
By the maximality of $\C_n$, for every $y\in E'$, there exists $x_i\in \C_n$ such that
$B(y,n, g_\ve) \cap B(x_i, n, g_\ve) \neq \emptyset$. We will show momentarily
 that $y\in B(x_i,n,3g_{\ve}) $.
This will imply $E' \subset \cup_{x_i \in \C_n} B(x_i, n, 3g_{\ve})$, and hence
$|\C_n| \geq \frac \sigma 2 e^{n(h_\mu - \delta)}$ by (i).

To show $y\in B(x_i,n,3g_{\ve}) $, it suffices to show $d(f^ky, f^kx_i)<3g_\ve(f^kx_i)
\ \forall k\le n$,
since $y \in E' \subset M^n$. Now $B(y,n, g_\ve) \cap B(x_i, n, g_\ve) \neq \emptyset$
means there exists
$z \in M$ such that $d(f^kx_i, f^kz) \leq  g_\ve(f^kx_i)$ and $d(f^kz, f^ky) \leq
g_\ve(f^ky)$ for all $0 \leq k \leq n$. Thus the assertion above boils down to
the following lemma.

\begin{lemma}
\label{lem:distance_estimates}
For any $x,y \in M $, if there exists $z \in M$ with $d(x,z) \le g_{\ve}(x)$ and
$d(z,y)\le  g_{\ve}(y) $, then $d(x,y) \le 3g_{\ve}(x)$.
\end{lemma}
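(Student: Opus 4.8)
The plan is to reduce the claim to two facts: the triangle inequality $d(x,y)\le d(x,z)+d(z,y)\le g_{\ve}(x)+g_{\ve}(y)$, and the comparison $g_{\ve}(y)\le 2g_{\ve}(x)$; together these give $d(x,y)\le 3g_{\ve}(x)$ at once. Recall $g_{\ve}=\tfrac13\hat g_{\ve}$ with $\hat g_{\ve}(w)=\min\{\ve,d(w,\Si)\}$, so that $g_{\ve}(w)\le\tfrac13\ve$ and $g_{\ve}(w)\le\tfrac13 d(w,\Si)$ for every $w\in M$; in the case $\Si=\emptyset$ one reads $d(\cdot,\Si)\equiv+\infty$, $g_{\ve}\equiv\tfrac13\ve$, and the lemma is immediate, so I assume $\Si\neq\emptyset$.

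The only real step is $g_{\ve}(y)\le 2g_{\ve}(x)$, and it rests on the estimate $d(y,\Si)\le 2d(x,\Si)$, which I would obtain in two moves. Since $d(x,z)\le g_{\ve}(x)\le\tfrac13 d(x,\Si)$, the triangle inequality gives $d(z,\Si)\le d(x,\Si)+d(x,z)\le\tfrac43 d(x,\Si)$. Since likewise $d(z,y)\le g_{\ve}(y)\le\tfrac13 d(y,\Si)$, we get $d(y,\Si)\le d(z,\Si)+d(z,y)\le d(z,\Si)+\tfrac13 d(y,\Si)$, hence $d(y,\Si)\le\tfrac32 d(z,\Si)\le 2 d(x,\Si)$. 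Now split according to which term attains $\hat g_{\ve}(x)$: if $d(x,\Si)\ge\ve$ then $2g_{\ve}(x)=\tfrac23\ve\ge\tfrac13\ve\ge g_{\ve}(y)$; if $d(x,\Si)<\ve$ then $\hat g_{\ve}(y)=\min\{\ve,d(y,\Si)\}\le d(y,\Si)\le 2d(x,\Si)=2\hat g_{\ve}(x)$, so $g_{\ve}(y)\le 2g_{\ve}(x)$ in this case as well. Combining with the triangle inequality finishes the proof.

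I do not anticipate a genuine obstacle; this is a short metric computation. The only care needed is the bookkeeping of which of $\ve$ and $d(\cdot,\Si)$ realizes the minimum in $\hat g_{\ve}$, together with checking that the constant comes out exactly $3$ — this is precisely the reason $g_{\ve}$ was defined as $\tfrac13\hat g_{\ve}$ rather than $\hat g_{\ve}$ itself, so that the slack absorbs the distortion of $d(\cdot,\Si)$ along the chain $x,z,y$.
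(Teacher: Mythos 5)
Your proposal is correct and follows essentially the same route as the paper: reduce to showing $g_{\ve}(y)\le 2g_{\ve}(x)$, establish $d(y,\Si)\le 2d(x,\Si)$ by the triangle inequality together with $g_\ve(\cdot)\le\frac13 d(\cdot,\Si)$, and then split into cases according to whether $d(x,\Si)$ exceeds $\ve$. The only cosmetic difference is that you pass through $d(z,\Si)$ in two steps where the paper combines the triangle inequality into a single chain, and you explicitly note the trivial case $\Si=\emptyset$; both yield the same constants.
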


\begin{proof}[Proof of Lemma]
It suffices to show $g_\ve(y) \leq 2 g_\ve(x)$, for that will imply $d(x,y) \le
d(x,z) + d(z,y) \leq
g_{\ve}(x)+ g_{\ve}(y) \leq 3 g_{\ve}(x)$, proving the lemma.
Observe that
\begin{eqnarray*}
d(y,\Si) & \leq & d(y,z)+d(z,x)+d(x,\Si)\\
& \le  & g_{\ve}(y)+ g_{\ve}(x)+d(x,\Si) \ \leq \ \mbox{$\frac 13$} d(y,\Si)
+ \mbox{$\frac 43$} d(x,\Si),
\end{eqnarray*}
the last inequality following from $g_\ve(\cdot) \le \frac13 d(\cdot, \Si)$.
Altogether, this gives $d(y,\Si) \leq 2 d(x,\Si)$.

To finish, consider the following two cases: \\
Case 1:  $d(x, \Si) > \ve$.  With $g_\ve(x)=\frac13 \ve$,
$g_\ve(y)$ is automatically $< 2g_\ve(x)$ since it is $\le \frac13 \ve$. \\
Case 2: $d(x,\Si) \leq \ve$.  In this case $g_\ve(y) \leq \frac13 d(y, \Si) \leq
\frac23 d(x, \Si) = 2 g_\ve(x)$.
\end{proof}

For each $x\in E'$, we have $B(x,n,g_{\ve})\subset M^n$ by definition.  Since
the $B(x_i, n,  g_\ve)$ are disjoint, we may estimate $m(M^n)$ by
\[
m(M^n)  \; \geq \; \sum_{x_i \in \C_n} m(B(x_i, n,  g_\ve))
             \; \geq \; |\C_n|\cdot \min_{x_i \in \C_n} m(B(x_i, n,  g_{\ve}))
            \; \geq \; \frac \sigma 2  e^{n(h_\nu - \delta)} e^{-n(\lambda^+_\nu + \delta)} .
\]
This yields
\[
\liminf_{n \to \infty} \frac 1n \log m(M^n) \geq
h_\nu(f) - \lambda_\nu^+ - 2\delta .
\]
The theorem is proved since $\delta$ was chosen arbitrarily.
\hfill $\square$

\bigskip
To complete the proofs of Theorems A--C,  it remains only to prove
the volume estimate in Proposition~\ref{prop:volume est}.

%%%%%%%%%%%%%%%%%%%%%%%%%%%%%%%%%%%
\section{Volume Estimates}

In this section we prove Proposition~\ref{prop:volume est} in the various settings of interest.
The basic argument, which treats the case $\Si = \emptyset$,
$m= \mu$, and $\nu \in \G_H$
is presented in Sect. 4.1. Proofs of other cases in Theorems A--C are presented
as modifications of this one.

%%%%%%%%%%%%%%%%%%%%%%%%%%%%%%%%%%%

\subsection{Proof of Proposition~\ref{prop:volume est}: Basic setup}
\label{prop proofs}

We consider here the most basic setup, namely where  $\Si=\emptyset$,
$m=\mu$, and $\nu \in \G_H$ (as defined), and give a proof of Proposition~\ref{prop:volume est}.

\bigskip
\noindent {\it I. Plan.}
From the  pointwise nature of the result and the fact that
the quantity on the left of (\ref{vol}) increases as $\ve\to 0$, it suffices to
show that given $\kappa>0$, for $\nu$-a.e. $x$ and arbitrarily small $\ve>0$,
there exists $c(x,\ve)$ such that
$$
m(B(x,n, g_\ve)) \ge c(x, \ve) e^{-n(\lambda_\nu^+ + \kappa)} \qquad {\rm for \ all} \ n \ge 0\ .
$$
Here, $g_\ve(\cdot) \equiv \frac13 \ve$; remember that $B(x,n, g_\ve)$ is a dynamical
ball {\it in} $M^n$ (and not in $M$). Such an object is cumbersome to work with since it involves both
the dynamics and the hole. To remove the hole from consideration, we introduce
$$
B^*(x,n,\ve, \gamma) := \{y \in M: d(f^ix, f^iy) < \ve e^{-\gamma i} \ {\rm for} \ 0 \le i < n\}\ .
$$
By definition of $\G_H$, for any $\gamma>0$ and $\nu$-a.e. $x$, $B^*(x,n, \frac13 \ve, \gamma)
 \subset B(x,n, g_\ve)$ for small enough $\ve$. Thus it suffices to prove, for
 a suitably chosen $\gamma$
and arbitrarily small $\ve>0$,
\begin{equation}
\label{ball bound}
m(B^*(x,n, \ve, \gamma)) \ge c(x, \ve, \gamma) e^{-n(\lambda_\nu^+ + \kappa)}
\qquad {\rm for \ all} \ n \ge 0\ .
\end{equation}
This is what we will do. Our strategy is to make these volume estimates in
Lyapunov charts and pass them back to the manifold.

\bigskip
\noindent {\it II. Lyapunov charts and hyperbolic estimates.}
Let $\lambda_1 <  \ldots < \lambda_p$ be
the distinct Lyapunov exponents of $(f,\nu)$, with multiplicities
$m_1, \ldots, m_p$ respectively, and let $E_i(x)$ be the subspace of $T_xM$
corresponding to $\lambda_i$. For each $i$, we let $R_i(r)$ denote the ball
of radius $r$ centered at $0$ in $\mathbb R^{m_i}$, and let $R(r) =
\Pi_{i=1}^p R_i(r)$. We recall below the following facts about Lyapunov charts,
following the exposition in \cite{young notes}.

\begin{proposition} \cite[Sect.\ 3.1]{young notes}
\label{prop:lyapunov}
Let $\delta << \min_{i \ne j} |\lambda_i -\lambda_j|$ be fixed. Then
there is a measurable set $V' \subset M$, $\nu(V')=1$,
a measurable function $\ell : V'
\to [1, \infty)$ satisfying $\ell(f^\pm x)/\ell(x) < e^{2\delta}$, and
a family of charts $\{\Phi_x: R(\delta \ell(x)^{-1}) \to M\}_{x \in V'}$
with the following properties:
\begin{itemize}
\item[(a)] (i) $\Phi_x(0)=x$;

(ii) $D\Phi_x(\{0\} \times \cdots \times {\mathbb R}^{ m_i} \times \cdots \times \{0\})
= E_i(x)$;

(iii) for all $z,z' \in R(\delta \ell(x)^{-1})$,
$$
K^{-1} d(\Phi_xz, \Phi_xz') \le |z-z'| \le \ell(x) d(\Phi_xz, \Phi_xz')
$$
where $K$ is a constant depending only on the dimension of $M$.
\item[(b)] Let $\tilde f_x = \Phi_{fx}^{-1} \circ f \circ \Phi_x$ be
defined where it makes sense.  Then

(i) $e^{\lambda_i -\delta} |v| \le |D\tilde f_x(0)v| \le e^{\lambda_i +\delta} |v|$
for $v \in \{0\} \times \cdots \times {\mathbb R}^{ m_i} \times \cdots \times \{0\}$;

(ii) {\rm Lip}$(\tilde f_x - D\tilde f_x(0)) < \delta$;

(iii) {\rm Lip}$(D\tilde f_x) < \ell(x)$.
\end{itemize}

\end{proposition}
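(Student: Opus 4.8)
\medskip
\noindent\textbf{Proof proposal.} This is the classical construction of Pesin's Lyapunov charts, and the proof in \cite{young notes} follows the standard route; I would carry it out as follows. First, apply the Oseledets multiplicative ergodic theorem to $(f,\nu)$ to obtain, on a set $V_0$ with $\nu(V_0)=1$, a measurable $Df$-invariant splitting $T_xM=E_1(x)\oplus\cdots\oplus E_p(x)$ with $\dim E_i(x)=m_i$ and $\lim_{n\to\pm\infty}\frac1n\log|Df^n_xv|=\lambda_i$ for every $v\in E_i(x)\setminus\{0\}$, together with the accompanying \emph{regularity}: the angle between $E_i(f^nx)$ and $\bigoplus_{j\ne i}E_j(f^nx)$, and the constants governing the convergence of the Lyapunov exponents above, vary subexponentially along orbits. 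Fix $\delta$ as in the statement.

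Second, introduce the Lyapunov inner product. For $v,w\in E_i(x)$ set
\[
\langle v,w\rangle'_x=\sum_{n\in\mathbb{Z}}\langle Df^n_xv,\,Df^n_xw\rangle_{f^nx}\,e^{-2\lambda_i n-2\delta|n|},
\]
declare $E_1(x),\dots,E_p(x)$ mutually $\langle\cdot,\cdot\rangle'_x$-orthogonal, and let $|\cdot|'_x$ denote the induced norm. The series converges since, using the subexponential control of the angles, $|Df^n_xv|_{f^nx}=e^{\lambda_i n+o(n)}|v|$, so the general term is $e^{o(n)-2\delta|n|}$. A shift of the summation index gives immediately, for $v\in E_i(x)$,
\[
e^{\lambda_i-\delta}|v|'_x\le|Df_xv|'_{fx}\le e^{\lambda_i+\delta}|v|'_x,
\]
and from the finiteness of the Oseledets data on $V_0$ one obtains a measurable $L:V_0\to[1,\infty)$ with $|v|_x\le|v|'_x\le L(x)|v|_x$ for all $x\in V_0$; by the regularity in the multiplicative ergodic theorem $L$ is tempered, $\frac1n\log L(f^nx)\to0$.

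Third, build the charts. For $x\in V_0$ let $\iota_x:\mathbb{R}^{m_1}\times\cdots\times\mathbb{R}^{m_p}\to T_xM$ be the linear isomorphism carrying the $i$-th factor isometrically from the Euclidean norm onto $(E_i(x),|\cdot|'_x)$, and set $\Phi_x=\exp_x\circ\iota_x$ on $R(\delta\ell(x)^{-1})$, with $\ell$ to be chosen. Then (a)(i), (a)(ii), (b)(i) are linear algebra: $\Phi_x(0)=x$; $D\Phi_x(0)=\iota_x$ sends the $i$-th factor onto $E_i(x)$; and $\tilde f_x(0)=0$ with $D\tilde f_x(0)=\iota_{fx}^{-1}\circ Df_x\circ\iota_x$, which preserves the coordinate blocks and satisfies the exact bounds of (b)(i) by the displayed inequality. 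For (a)(iii) one composes the near-isometry of $\exp_x$ on balls of radius below the uniformly positive injectivity radius of the compact manifold $M$ — which, together with the comparison between the sup-shaped product domain and the Euclidean norm, accounts for the constant $K=K(\dim M)$ — with $|v|_x\le|v|'_x\le L(x)|v|_x$, whose upper half supplies the factor $\ell(x)$ once $L$ has been absorbed into $\ell$.

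Finally, (b)(ii)--(b)(iii) and the choice of $\ell$, which is the only genuinely technical point. The map $\tilde f_x=\Phi_{fx}^{-1}\circ f\circ\Phi_x$ is $C^{1+\epsilon}$, and the $\epsilon$-Hölder constant $H_x$ of $D\tilde f_x$ on its domain is bounded by a product of the global $C^{1+\epsilon}$ norm of $f$ (finite because $M$ is compact), fixed curvature and injectivity-radius bounds, and the chart distortions at $x$ and $fx$, which are controlled by powers of $L(x)$ and $L(fx)$; hence $H_x\le\rho_0(x)$ for a measurable $\rho_0$ built from tempered data and constants, so itself tempered. Taking the chart radius $\delta\ell(x)^{-1}$ with $\ell$ large (namely $\ell\ge\rho_0$) forces $\|D\tilde f_x(z)-D\tilde f_x(0)\|\le H_x(\delta\ell(x)^{-1})^\epsilon<\delta$, which is (b)(ii), and arranging also $\ell\ge H_x$ gives (b)(iii). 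To obtain the slow variation, temper: put $\rho=\max\{\rho_0,L,1\}$ and $\ell(x)=\sup_{n\in\mathbb{Z}}\rho(f^nx)\,e^{-\delta|n|}$; since $\rho$ is tempered the supremum is $\nu$-a.e.\ finite, $\ell\ge\rho\ge1$, and $\ell(f^{\pm1}x)\le e^{\delta}\ell(x)<e^{2\delta}\ell(x)$. Let $V'$ be the full-measure subset of $V_0$ on which $\ell<\infty$. The step I expect to be the real work is making this last estimate quantitative — bounding the Hölder constant of $D\tilde f_x$ by an explicitly measurable, tempered function of $x$ before the $\sup$-construction of $\ell$ can be applied; the temperizing itself and the algebraic checks of (a) and (b)(i) are routine, and a secondary technical point is the convergence of the Lyapunov series in the non-conformal case $p\ge2$, which genuinely uses the subexponential decay of the angles between the Oseledets subspaces.
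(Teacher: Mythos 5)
Your construction is exactly the one the paper relies on: the paper gives no proof of Proposition~\ref{prop:lyapunov}, citing \cite[Sect.~3.1]{young notes}, and the argument there is precisely your route (Oseledets splitting, the Lyapunov inner product $\sum_{n}\langle Df^n v,Df^n w\rangle e^{-2\lambda_i n-2\delta|n|}$ with the blocks declared orthogonal, charts $\exp_x\circ\,\iota_x$, and a tempering of the comparison function to get $\ell(f^{\pm}x)<e^{2\delta}\ell(x)$); it is also the same scheme the paper's Appendix~A adapts to the singular case for $(a)(iii')$, $(b)(ii')$, $(b)(iii')$.

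One caveat in your last step: from a $C^{1+\epsilon}$ bound you only control the $\epsilon$-H\"older constant $H_x$ of $D\tilde f_x$, and ``arranging $\ell\ge H_x$'' does not yield the \emph{Lipschitz} bound in (b)(iii) when $\epsilon<1$ (a H\"older function on a small ball need not be Lipschitz). Either assume $Df$ locally Lipschitz (e.g.\ $f\in C^2$, which is what the paper effectively uses wherever second derivatives enter, cf.\ the bound on $\|D^2\hat f_x\|$ in Appendix~A), in which case your argument with $H_x=\mathrm{Lip}(D\tilde f_x)$ is fine, or state (b)(iii) in H\"older form; the paper's only use of (b)(iii) in Sect.~4.1 (the estimate $|D\tf_x(\eta)v-D\tf_x(0)v|\le \mathrm{Lip}(D\tf_x)\,r\,|v|<\delta|v|$) goes through equally with the H\"older version after adjusting the chart radius. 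Your items (a)(i)--(ii), (b)(i), the comparison (a)(iii) with $K=K(\dim M)$, and the tempering $\ell(x)=\sup_n\rho(f^nx)e^{-\delta|n|}$ are all as in the cited source.
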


\medskip
The following notation is used: Let $T_xM = E^{cu}(x) \oplus
E^s(x)$ where $E^{cu}(x) = \oplus_{i: \lambda_i \ge 0} E_i(x)$ and $E^s(x) =
\oplus_{i: \lambda_i < 0} E_i(x)$.
We will estimate the volume of the sets in question by looking at slices
parallel to $E^{cu}$, and will do so in Lyapunov charts.
Let $R^{cu}$ and $R^s$ be the subspaces in the charts corresponding to
$E^{cu}$ and $E^s$, and let $R^{cu}(r)$ and
$R^s(r)$ denote disks of radius $r$ centered at $0$ in $R^{cu}$ and $R^s$
respectively.
We will work with compositions of chart maps, writing
$$
\tilde f^n_x := \tilde f_{f^{n-1}x} \circ \cdots \circ \tilde f_x\ ,
$$
and study graph transforms by $\tilde f^n_x$ of functions from
$R^{cu}(r)$ to $R^s(r)$. The precise assertions are as follows:

\bigskip
\noindent (a) For all $\gamma>0$ sufficiently small,
there exist $\delta, \sigma>0$ small enough and a chart system (with $\delta$
as in Proposition 4.1) such that the following holds for $\nu$-a.e. $x$:
Let $r \le \delta \ell(x)^{-1}$, and let $g_0: R^{cu}(r) \to R^{s}(r)$
be a $C^1$ function with $|g_0(0)|<\frac12 r$ and $\|Dg_0\| < \sigma$.
Then for $i=1,2, \cdots$, there exists $g_i: R^{cu}(e^{-\gamma i} r) \to
R^{s}(e^{-\gamma i} r)$ defined on exponentially shrinking domains and
with $\|Dg_i\| < \sigma$ for all $i$ such that inductively
$$
\tilde f_{f^{i-1}x}({\rm graph}(g_{i-1})) \cap R(e^{-\gamma i}r) \ = \
{\rm graph}(g_i)\ .
$$
That is to say, if $\Gamma_z$ is the graph transform by
$\tilde f_z$, then for each $i$, $\Gamma_{f^{i-1}x}(g_{i-1})=g_i$.

\bigskip
\noindent (b) For $g: R^{cu}(r) \to R^{s}(r)$ and  $y \in$ graph$(g)$,
let $T_g(y)$ denote the tangent space to the graph of $g$ at $y$.
If $\delta$ and $\sigma$ in (a) are small enough, then for
$y \in$ graph$(g_0)$ such that $\tf^i_x(y) \in R(e^{-\gamma i}r)$
for all $i\le n$,
$$|\det [D\tf^n_x(y)|_{T_{g_0}(y)}]| < e^{n(\lambda^+ + 3k \delta)}
$$
where $\lambda^+ = \sum_{i:\lambda_i >0} m_i \lambda_i$  and $k=$dim$(E^{cu})$.

\bigskip
Notice first that with $2\delta < \gamma$, we are assured that $R(e^{-\gamma i }r)$
lies in the chart at $f^ix$; this is because $\ell(f^ix) > e^{-\gamma i}\ell(x)$;
see Proposition 4.1. Since most of the other assertions in
(a) and (b) follow from standard (uniformly hyperbolic) graph transform
estimates, we will only sketch the arguments for a few key points. (A version of
these estimates can be found in \cite[Sect.\ 3.1]{young notes}; see also
\cite[Sect.\ B]{young large d} for similar results.)

The ``overflowing property" of the graph transforms can be justified as follows.
Consider first the case where $g_0(0)=0$.
By Proposition 4.1(b)(i), $|D\tf_x(0)v| \ge
e^{-\delta} |v| \approx (1-\delta) |v|$ for $v \in R^{cu}$.
By Proposition 4.1(b)(iii) together with chart size, we have, for all $\eta \in R(r)$,
$$|D\tf_x(\eta)v - D\tf_x(0)v| \le {\rm Lip}(D\tf_x) r |v| < \delta |v|\ .
$$
This gives $|D\tf_x(\eta)v| > (1-3\delta) |v| \approx e^{-3\delta}|v|$
for $v$ with a small enough component in $R^s$. Thus with $\delta$
and $\sigma$ sufficiently small relative to $\gamma$, the overflowing property
is assured from step to step for $g_0$ with $g_0(0)=0$.
For graphs that do not pass through $0$, we pivot them at
$y \in \tilde W^s_{loc} \cap$graph$(g_0)$ where $\tilde W^s_{loc}$ is the
stable manifold of $x$ in its chart. Since $|\tf_x^i(y)| < e^{(\lambda_s + \delta) i}$
where $\lambda_s = \max\{\lambda_i: \lambda_i < 0\}$, movements of
$\tf^i_x(y)$ in the $R^{cu}$-direction are negligible assuming
$\gamma << |\lambda_s|$.

The assertion in (b) is proved similarly: We view $\det(D\tf_x^n)$
as a product of determinants. At each step,
$|\det(D\tf_{f^ix}(0)|_{R^{cu}})| < e^{\lambda^+ + k\delta}$, and
we may assume that approximations of the type in the last paragraph
increase the error by a factor $<e^{2k\delta}$.

\bigskip
\noindent {\it III. Completing the proof.}
Putting assertions (a) and (b) in II together, we arrive at the following:
Define
$$
\tilde B^*(x,n,r,\gamma) = \{y\in R^{cu}(r) \times R^s(\mbox{$\frac 12 r $}):
\tf^i_x(y) \in R(e^{-\gamma i} r) {\rm \ for} \ i=1,2, \cdots, n\}\ .
$$
We foliate $\tilde B^*(x,0,r,\gamma)$ with planes $\{P\}$ parallel to $R^{cu}(r)
\times \{0\}$, and view them as graphs of constant functions. By the overflowing
property of the graph transform at each step, $\tf^i_x(y) \in R(e^{-\gamma i} r)$
for all $i \le n$ is equivalent to $\tf^n_x(y) \in R(e^{-\gamma n} r)$.
Pulling back $\tf^n_x(P) \cap R(e^{-\gamma n}r)$, we use the bound
in assertion (b) to estimate the area of $P \cap \tilde B^*(x,n,r,\gamma)$.
We then integrate over $\{ P \}$ to obtain

\begin{equation}
\label{bound2}
{\rm Leb}(\tilde B^*(x,n,r,\gamma)) \ge \Big( \frac r2 \Big)^{d-k} \cdot
(r e^{-\gamma n})^k \cdot e^{-n(\lambda^+ + 3k \delta)}\
\end{equation}
where $d=$dim$(M)$.

We now return to the argument outlined at the beginning of the proof.
Let $\kappa>0$ be given. Assuming always $\gamma << |\lambda_s|$,
we now take it small enough that $4k\gamma < \kappa$, and let $\delta$ be
small enough (with respect to $\gamma$) for assertions (a) and (b) to hold
in the chart system $\{\Phi_x\}$ associated with $\delta$. For $\nu$-typical $x$,
we consider $\ve$ small enough that $B(f^ix, \ve e^{-\gamma i}) \cap H
=\emptyset$ for all $i\ge 0$.  Choosing $r< \ve/K$
where $K$ is as in Proposition 4.1(a)(iii), we define $\tilde B^*(x,n,r,\gamma)$
in the chart at $x$ as above, and observe that
$\Phi_x(\tilde B^*(x,n,r,\gamma)) \subset B^*(x,n, \ve, \gamma)$.
To finish, it remains to pass the estimate in (\ref{bound2}) back to $M$.
Proposition 4.1(a)(iii) gives  a bound on the Jacobian of $\Phi_x$,
allowing us to conclude
$$
m(\Phi_x(\tilde B^*(x,n,r,\gamma)) \ge
\ell(x)^{-d} \cdot  {\rm Leb}(\tilde B^*(x,n,r,\gamma)) \ge
c \, e^{-n(\lambda^+ + \kappa)}
$$
for some constant $c$ depending on $x, \ve$ and $\gamma$.
\hfill $\square$

%%%%%%%%%%%%%%%%%%%%%%%%%%%%%%%%%%%
\subsection{Adaptations of basic argument to various settings}

We now explain how each of the other results in Theorems A--C is deduced
from the proof in Sect. 4.1.

\bigskip
\noindent {\bf 1. The $W^s$-neighborhood condition (O):}
We continue to assume $\Si=\emptyset$ and $m=\mu$. To relax the condition
from the original definition of $\G_H$ in Sect. \ref{upper bound} to the one given by {\bf (O)}, the proof in Sect. 4.1 is modified as follows:
Given $\kappa$, we fix $\gamma, \delta$, a chart system $\{\Phi_x\}$, and
a $\nu$-typical $x \in M$. Let $O$, a $W^s$-neighborhood, and $\ve$ be
such that $f^i(O) \cap B(f^ix, \ve e^{-\gamma i}) \subset M
\setminus H$ for all $i$. We need to show $m(O \cap B^*(x,n,\ve,\gamma))
\ge c(x,\ve, \gamma) e^{-n(\lambda^+_\nu + \kappa)}$. Let $r< \ve/K$.

The following notation is used: For $y \in R(r)$ and small $\eta>0$,
let $R(y,\eta)=y+R(\eta)$; if $y=(y^{cu}, y^s)$ are the coordinates of $y$
with respect to $R^{cu}$ and $R^s$,
we write $R^{cu}(y^{cu}, \eta)= y^{cu}+R^{cu}(\eta)$, and so on.
To define the analog of $\tilde B^*(x,n,r,\gamma)$ in Sect. 4.1,
let $z \in O \cap W^s_{loc}(x)$ be sufficiently close to $x$, let $\tilde z :=\Phi_x^{-1}(z)$,
and let $r'< r$ be small enough that $\Phi_x(R(\tilde z, r')) \subset O$.
Define $\tilde B^*(x,n, z, r',\gamma)$
$$
 :=   \{y\in R^{cu}(\tilde z^{cu}, r') \times
R^s(\tilde z^s, \mbox{$\frac12 r'$}):
\tf^i_x(y) \in R(\tf^i_x(\tilde z), e^{-\gamma i} r') {\rm \ for} \ i=1,2, \cdots, n\}\ .
$$
Since $z \in W^s_{loc}(x)$, $\tf^i_x(\tilde z) \to 0$ as $i \to \infty$. It is straightforward to check that modulo a constant,
Leb$(\tilde B^*(x,n, z, r',\gamma))$ is bounded below by the quantity on
the right side of (\ref{bound2}),
and that $\Phi_x(\tilde B^*(x,n, z, r',\gamma)) \subset
(O \cap B^*(x,n,\ve,\gamma))$.
\hfill $\square$

\bigskip
In the settings below, we will revert back to $\G_H$ as defined,
leaving it to the reader to extend the proof to include the condition {\bf (O)}
if they so choose.

\bigskip
\noindent {\bf 2. Initial distributions with densities:}
Continuing to assume $\Si = \emptyset$,
we let $m=\mu_\vf $ for some $\vf \in L^1(\mu)$. Let $\nu \in \G_H \cap
\G_\vf$, and let $Z$ and $c_\nu$ have the meaning in the definition of $\G_\vf$.
Observe that for $\nu$-a.e. $x \in Z$ and
small enough $\ve$, one has $\mu_\vf (B^*(x,n,\ve, \gamma)) \ge
c_\nu m(B^*(x,n,\ve, \gamma))$.
An argument identical to that in Sect. 4.1 proves Proposition~\ref{prop:volume est}
with $E=Z$. \hfill $\square$

\bigskip
\noindent {\bf 3. SRB measures as initial distributions:} Continuing to assume
$\Si = \emptyset$, we let $m=\musrb$ as in Theorem B. Given $\nu \in \G_H \cap
\Gsrb$, we fix a $\musrb$ hyperbolic product set
$\Pi = (\cup \Gamma^u) \cap (\cup \Gamma^s)$
with $\nu(\Pi)>0$, and show that the volume
estimate for $m(B^*(x,n,\ve,\gamma))$ in Sect. 4.1 holds for $\nu$-a.e. $x \in \Pi$.

Let $x \in \omega^u_x \cap \omega^s_x \in \Pi$ be a $\nu$-typical point, where
$\omega^u_x \in \Gamma^u$ and $\omega^s_x \in \Gamma^s$.
Note that due to the uniform contraction of $T^n\omega^s_x$ and $T^{-n}\omega^u_x$
required by (W.1) of Sect.~\ref{upper bound}.B, $x$ can have no zero Lyapunov
exponents.
Let $\tilde W^u_{loc}$ denote the image
of the local unstable manifold through $x$ in its chart.
Since local unstable manifolds are unique (up to size),
$\Phi_x^{-1}(\omega^u_x) \subset \tilde W^u_{loc}$, which has the dimension of
$R^u$ and is tangent to it at $0$. (Since no zero Lyapunov exponents is an
assumption for Theorem B, we have $R^u$ instead of $R^{cu}$.)
By conditions (W.1) and (W.2),
for all small enough $r>0$, there exists $\Gamma_0 \subset \Gamma^u$ such that
(i) $\musrb(\cup_{\omega \in \Gamma_0} \omega)>0$ and
(ii) for every $\omega \in \Gamma_0$, $\Phi_x^{-1}(\omega) \cap R(r)$ is the graph of
a function from $R^u(r)$ to $R^s(r)$ with the properties of $g_0$ in Sect. 4.1.
Define
$$
\tilde B^*(x,n,r, \Gamma_0, \gamma) =
\{y \in \cup_{\omega \in \Gamma_0} \Phi_x^{-1}\omega :
\tf^i_x(y) \in R(e^{-\gamma i} r) {\rm \ for} \ i=0,1,2, \cdots, n\}\ .
$$
With $r$ small enough relative to $\ve$, clearly
$\Phi_x(\tilde B^*(x,n,r, \Gamma_0, \gamma)) \subset B^*(x,n,\ve,\gamma)$.
To estimate the measure of this set, it is more convenient to bring
$\musrb$ to the chart (instead of doing it on $M$): Let $\alpha$ be the measure
$(\Phi_x^{-1})_*(\musrb|_{\cup_{\omega \in \Gamma_0} \omega})$
restricted to $R(r)$. By (i) above together with (W.3),
$\alpha(\tilde B^*(x,0,r, \Gamma_0, \gamma))>0$.
We disintegrate $\alpha$
into conditional probability measures on the leaves $\{\Phi_x^{-1}\omega\}$,
letting $\alpha_T$ denote the measure in the transverse direction.
To estimate the $\alpha$-measure of $\tilde B^*(x,n,r, \Gamma_0, \gamma)$,
we do it one $\Phi_x^{-1}\omega$-leaf at a time, integrating
with respect to $\alpha_T$ afterwards. Condition (W.3) ensures
uniform lower bounds of the type in (\ref{bound2}) for $\alpha_T$-almost all leaves.
\hfill $\square$

\bigskip
\noindent {\bf 4. Maps with singularities:} We discuss the case $m=\mu$, leaving the
others to the reader.

\smallskip
Let $\nu \in \G_H \cap \G_\Si$, and observe
the following lemma.

\begin{lemma}
\label{lem:approach}
Let $E_{\ve,\gamma} = \{ x \in M : d(f^ix, \Si)>\ve e^{-\gamma i}$ for all $i\ge 0 \}$.
Then for any fixed $\gamma>0$, $\lim_{\ve \to 0} \nu(E_{\ve,\gamma}) = 1$.
\end{lemma}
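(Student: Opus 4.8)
The plan is to bound the measure of the complement $M \setminus E_{\ve,\gamma}$ directly, using only the defining property of $\G_\Si$ together with the $f$-invariance of $\nu$; the hole plays no role here. Unwinding the definition, a point $x$ fails to lie in $E_{\ve,\gamma}$ precisely when $d(f^i x, \Si) \le \ve e^{-\gamma i}$ for some $i \ge 0$, so
\[
M \setminus E_{\ve,\gamma} \ \subseteq \ \bigcup_{i \ge 0} f^{-i}\big( N_{2\ve e^{-\gamma i}}(\Si) \big),
\]
where the factor $2$ merely converts the closed condition $d(\,\cdot\,,\Si)\le s$ into the open neighborhood $N_{2s}(\Si)$ appearing in the definition of $\G_\Si$. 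Each set on the right is Borel since $f$ is measurable and $N_s(\Si)$ is open, so $E_{\ve,\gamma}$ is Borel.

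Next I would apply countable subadditivity and the invariance $\nu \circ f^{-i} = \nu$ to get
\[
\nu(M \setminus E_{\ve,\gamma}) \ \le \ \sum_{i\ge 0} \nu\big(N_{2\ve e^{-\gamma i}}(\Si)\big).
\]
Since $\nu \in \G_\Si$, there are constants $C,\alpha>0$ with $\nu(N_s(\Si)) \le C s^\alpha$ for all $s>0$; inserting $s = 2\ve e^{-\gamma i}$ bounds the sum by
\[
\sum_{i\ge 0} C (2\ve)^\alpha e^{-\gamma \alpha i} \ = \ \frac{C (2\ve)^\alpha}{1 - e^{-\gamma\alpha}},
\]
a geometric series that converges because $\gamma\alpha>0$. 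Letting $\ve \to 0$ with $\gamma$ held fixed, the right-hand side tends to $0$, hence $\nu(E_{\ve,\gamma}) \to 1$, which is the assertion.

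There is essentially no obstacle in this argument: it is a one-line Borel--Cantelli-type estimate, and the only points requiring any care are the (routine) measurability remark above and the harmless conversion between the inequality $d(\,\cdot\,,\Si)\le s$ and the open $\ve$-neighborhoods used to define $\G_\Si$. In particular, the estimate is uniform in the choice of $\nu$ only through the constants $C,\alpha$, which is all that is needed in the subsequent application.
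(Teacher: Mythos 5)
Your argument is correct and is essentially the paper's own proof: bound $\nu(M\setminus E_{\ve,\gamma})$ by a union bound over $i$, use $f$-invariance of $\nu$ to drop the $f^{-i}$, apply the $\G_\Si$ bound $\nu(N_s(\Si))\le Cs^\alpha$, and sum the resulting geometric series to get a bound of order $\ve^\alpha$. The factor $2$ you insert to pass from the closed condition to the open neighborhood is harmless and only changes the constant.
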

\begin{proof}
This follows from the simple estimate,
\[
\nu(M \setminus E_{\ve,\gamma}) = \sum_{i\ge 0} \nu [f^{-i}( N_{\ve e^{-\gamma i}}(\Si))]
= \sum_{i\ge 0} \nu [N_{\ve e^{-\gamma i}}(\Si)] \le
\sum_{i \ge 0} C \ve^\alpha e^{-\gamma \alpha i} \le C' \ve^\alpha\ ,
\]
the first inequality coming from the definition of $\G_\Si$.
(See also \cite[Part I, Lemma 3.1]{katok}.)
\end{proof}

This means that for $x \in E_{\ve, \gamma}$, we again have
$B^*(x,n, \frac13 \ve,\gamma) \subset B(x,n, g_{\ve})$,
for $g_{\ve}(f^ix) = \frac13 \min\{\ve, d(f^ix, \Si)\} \ge \frac13 \ve e^{-\gamma i}$.

Continuing to follow the proof in Sect.\ 4.1, we note that
the definition of $\G_\Si$ together with \eqref{eq:d1 blowup} implies that
$\int_M \log^+ \| Df^{\pm 1}_x \| \, d\nu < \infty$ where $\log^+ x = \max \{ \log x, 0 \}$,
so Lyapunov exponents are well defined $\nu$-a.e.  In addition, the Lyapunov charts
described in Proposition 4.1 exist for this class of maps with some modifications
due to the presence of singularities.

Observe first that $(a)(i)$, $(a)(ii)$ and $(b)(i)$ of Proposition 4.1 hold as stated
since these quantities depend only
on $Df$ at a typical point $x$ (see
\cite[Part I, Theorem 2.2]{katok}).\footnote{Although \cite{katok} uses only
a single splitting, $T_xM = E_\alpha(x) \oplus E_\beta(x)$, one can just as easily
split the tangent space into $\oplus_i E_i(x)$, one for each Lyapunov exponent,
to obtain Proposition 4.1$(b)(i)$ using an argument identical to that in
\cite[Sect.\ 3.1]{young notes}.}

The other items of Proposition~\ref{prop:lyapunov} are modified as follows.  Fix $\delta$ as in Proposition~\ref{prop:lyapunov}.  Then there exist a set $V'$ with $\nu(V')=1$
and a measurable function $\ell(x):V' \to [ 1, \infty)$, with $\ell(f^{\pm}x) < e^{2\delta} \ell(x)$,
such that
for all $\ve>0$ sufficiently small, the charts $\Phi_x$ are defined on $R(\delta \ell(x)^{-1} g_\ve(x)^b)$, where $b$
is the exponent from \eqref{eq:d2 blowup}, and satisfy

\begin{itemize}

  \item[$(a)$] $(iii')$ For all $z,z' \in R(\delta \ell(x)^{-1} g_\ve(x)^b)$,
\[
K^{-1} d(\Phi_xz, \Phi_xz') \le |z-z'| \le \ell(x) d(\Phi_xz, \Phi_x z')
\]
where $K$ is a constant depending only on the dimension of $M$ and $c_0$ from
\eqref{eq:exp}.

\item[$(b)$] Let $\tilde f_x = \Phi_{fx}^{-1} \circ f \circ \Phi_x$ be defined where it makes sense.  Then

$(ii')$ {\rm Lip}$(\tilde f_x - D\tilde f_x(0)) \le \delta$;

$(iii')$ {\rm Lip}$(D\tilde f_x) \le \ell(x) g_\ve(x)^{-b}$.
\end{itemize}
Although the construction of these charts is similar to that found in
\cite{pesin 1, katok}, we include the necessary arguments in the Appendix
since the statements we need are somewhat different from those found in
the literature.

With the charts $\{\Phi_x\}$ in place, the proof follows a similar line to that given
in Section 4.1, with slight modifications due to the singularities.  For example,
assertion (a) is no longer a uniform statement for all $x \in V'$; rather, we need to
choose $r \le \delta \ell(x)^{-1}g_\ve(x)^b$, but only
after $\ve$ is fixed depending on the rate of approach of $x$ to the singularities.
We state precisely these changes below.

Fix  $\kappa >0$ and choose
$\gamma << |\lambda_s|$ such that $(b+4)k\gamma < \kappa$.
Using Lemma~\ref{lem:approach}, we choose
$\ve>0$ such that $\nu(E_{\ve, \gamma}) > 1-\kappa$.
Next we choose $\delta>0$ with $2\delta < \gamma$,
so that there exists a chart system $\{ \Phi_x \}_{x \in V'}$
with the modified properties as listed in $(a)(iii')$-$(b)(iii')$ above.
Note that $\nu(V' \cap E_{\ve,\gamma}) > 1- \kappa$.

We now choose $x \in V' \cap E_{\ve, \gamma}$ and prove the estimate \eqref{ball bound}.
Note that $B(f^ix, \ve e^{-\gamma i}) \cap (H \cup \Si) = \emptyset$.
Finally, choosing $r \le \delta \ell(x)^{-1} g_\ve(x)^b$
guarantees that
the assertions (a) and (b) of Sect.~4.1 hold along
the orbit of $x$ with $\gamma$ replaced by $\gamma(b+1)$,
for then $R(r e^{-i \gamma (b+1)})$ lies in the chart at $f^ix$
by definition of $E_{\ve, \gamma}$ and choice of $r$.
In particular, $\tf_{f^ix}$ is defined on
$R(r e^{-i \gamma (b+1)})$.
We shrink $r$ further if necessary so that $r< \ve/(3K)$
and define $\tilde B^*(x,n,r,\gamma(b+1))$ as in Sect.~4.1.
Then by item $(a)(iii')$ above,
$\Phi_{x}(\tilde B^*(x,n,r,\gamma(b+1))) \subset B^*(x,n,\frac 13 \ve, \gamma)$
and the rest of the
proof follows line by line with only minor changes to constants.
For example, \eqref{bound2} has the factor $(r e^{-\gamma n (b+1)})^k$ as
indicated above.

This proves Proposition~\ref{prop:volume est} for all $x \in E_{\ve, \gamma}$.  But since
$\kappa>0$ was chosen arbitrarily, by Lemma~\ref{lem:approach} we conclude that
Proposition~\ref{prop:volume est} holds for $\nu$-a.e. $x$.
\hfill $\square$

%%%%%%%%%%%%%%%%%%%%%%%%%%%%%%%%%%
%%%%%%%%%%%%%%%%%%%%%%%%%%%%%%%%
\section{Towers with Holes}
\label{tower}

This section is exclusively about escape dynamics on towers.
Sect.~\ref{tower review} reviews basic facts and notation for towers
making precise {\bf (A.1)}--{\bf (A.4)} in Sect.~\ref{tower results}.
In Sect.~\ref{tower theorems} we formulate results
analogous to Theorems D and E for {\it towers with Markov holes}.
Proofs are given in Sects.~5.3 and 5.4.

%%%%%%%%%%%%%%%%%%%%%%%%%%%%%%%%%%
\subsection{Review of definitions and basic facts}
\label{tower review}

\noindent {\bf I. Closed systems (without holes)}

\medskip
Let $f:M\circlearrowleft$ is a
(piecewise) $C^{1+\epsilon}$ diffeomorphism.
The material below is taken from \cite{young tower}. We recall only essential definitions,
referring the reader to \cite{young tower} for detail.

\medskip
\noindent {\bf Generalized horseshoes:}  The idea of a {\it  generalized horseshoe with infinitely many branches and variable return times}, denoted $(\Lambda, R)$,
is as follows: $\Lambda \subset M$ is a compact subset with a hyperbolic product
structure, {\it i.e.}, $\Lambda = (\cup \Gamma^u) \cap (\cup
\Gamma^s)$ where $\Gamma^s$ and $\Gamma^u$ are continuous families
of local stable and unstable manifolds, and $\mu_{\omega}\{\omega
\cap \Lambda\}>0$ for every $\omega \in \Gamma^u$ where $\mu_\omega$
is the Riemannian measure on the unstable manifold $\omega$. We say
$\Lambda^s$ is an $s$-subset of $\Lambda$ if
$\Lambda^s= (\cup \Gamma^u) \cap (\cup \tilde \Gamma^s)$ for some
$\tilde \Gamma^s \subset \Gamma^s$, and $u$-subsets are defined similarly.
Modulo a set the restriction of which to each $\omega \in \Gamma^u$ has
$\mu_\omega$-measure zero, $\Lambda$ is a countable
disjoint union of (closed) $s$-subsets $\Lambda_j$ with the property
that for each $j$,
there exists $R_j \in {\mathbb Z}^+$ such that $f^{R_j}(\Lambda_j)$ is a
$u$-subset of $\Lambda$. The function $R: \Lambda \to {\mathbb Z}^+$
given by $R|_{\Lambda_j} =R_j$ is called the {\it return time function} to $\Lambda$.

The definition of a generalized horseshoe includes conditions on
hyperbolicity formulated as {\bf (P1)}--{\bf (P5)} in \cite{young tower}.
We will omit them and
focus instead on the estimates derived from these conditions that we will need.
Let $\omega^s(x)$ and $\omega^u(x)$ denote respectively
the elements of $\Gamma^s$ and $\Gamma^u$ containing $x$.
\begin{itemize}
\item[$\bullet$] There is a {\it separation time} $s: \Lambda \to
{\mathbb Z}^+$ with the property that (i) $s(x,y) = s(x',y')$ for
$x' \in \omega^s(x), y' \in \omega^s(y)$; (ii) for $x, y \in \Lambda_j$,
$s(x,y) \ge R_j$, and (iii) for $x \in \Lambda_j$, $y \in \Lambda_{j'}$,
$j \ne j'$, $s(x,y) \le \min (R_j, R_{j'})$.
\item[$\bullet$] There are constants $C>0$ and $\alpha \in (0,1)$, related to the
hyperbolicity and distortion of $f$, such that if $y \in \omega^s(x)$, then $d(f^nx, f^ny) \le C \alpha^n$
for all $n \ge 0$.
\end{itemize}

The following facts about the Jacobian in the unstable direction are useful.
For $\omega, \omega'  \in \Gamma^u$, the holonomy map
$\Theta_{\omega, \omega'}:
\omega \cap \Lambda \to \omega' \cap \Lambda$ is
obtained by sliding along stable curves, i.e.
$\Theta_{\omega, \omega'}(x) = \omega^s(x) \cap \omega'$.
Fix an arbitrary leaf $\hat \omega \in \Gamma^u$. We
let $\hat \Theta(x)$ be the unique point in $\omega^s(x) \cap \hat \omega$,
and define $a(x) = \log \prod_{i=0}^\infty \frac{\det Df^u(f^ix)}
{\det Df^u(f^i(\hat \Theta x))}$,
where $\det Df^u(x) = \det (Df(x) |_{E^u(x)})$ is the unstable Jacobian of $f$.
This function is used to define a
family of reference measures $\{m_\omega, \omega \in \Gamma^u\}$,
where
$m_\omega$ is the measure on $\omega$ whose density with respect to
$\mu_\omega$ is $e^a \cdot 1_{\omega \cap \Lambda}$.
For $x \in \omega \cap \Lambda_i$,
let $\omega'$ be such that $f^{R_i}(\omega \cap \Lambda_i) = \omega'$.
We define
 $J^u(f^R)(x) = J_{m_\omega, m_{\omega'}}(f^{R_i}|
(\omega \cap \Lambda_i))(x)$, the Jacobian of $f^R$ with respect to
the measures $m_\omega$ and $m_{\omega'}$.

\medskip
{\it Remark on notation:} It is convenient in this section to follow the
notation in [Y3], some of which conflicts, however, with earlier notation.
For example, $m$ in the last paragraph is not intended
to signify any relation to initial distributions in escape dynamics,
and $C_1$ below is not related to the same notation in Sect. 2.1,
Paragraph III. We do not believe this will lead to problems
as the contexts are quite different.

\begin{lemma} (\cite[Lemma 1]{young tower})
\label{lemma:jacobian}
\ (1) For all $\omega$, $\omega' \in \Gamma^u$,
  $(\Theta_{\omega,\omega'})_*m_\omega = m_{\omega'}$.
\begin{itemize} \vspace{-3 pt}
  \item[(2)] For each $\omega \in \Gamma^u$ and $x \in \omega$,
  $J^u(f^R)(x) = J^u(f^R)(y)$ for all $y \in \omega^s(x)$. \vspace{-6 pt}
  \item[(3)] $\exists C_1>0$ (depending on $C$ and $\alpha$) such that
  for each $\omega \in \Gamma^u$, $i \in \mathbb Z^+$ and all
  $x,y \in \Lambda_i \cap \omega$,
    \begin{equation}\label{eq:C_1}
        \left| \frac{J^u(f^R)(x)}{J^u(f^R)(y)} - 1\right|
    \leq C_1 \alpha^{s(f^Rx, f^Ry)/2} .
    \end{equation} \vspace{-3 pt}
\item[(4)] $\sup_{x \in \Lambda} a(x) < \infty$ and $|a(x)-a(y)| \le
 4C \alpha^{\frac12 s(x,y)}$ on each $\omega \in \Gamma^u$.
 \end{itemize} \vspace{-6 pt}
\end{lemma}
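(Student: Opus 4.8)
The plan is to follow the proof of Lemma~1 of \cite{young tower}, which carries over to the present setting unchanged; I indicate the key steps. Write $\phi := \log\det Df^u$, which is $\epsilon$-H\"older along and transverse to unstable leaves by the regularity and bounded-distortion hypotheses \textbf{(P1)}--\textbf{(P5)} of \cite{young tower}, and recall the two contraction estimates those hypotheses provide: $d(f^nx,f^ny)\le C\alpha^n$ whenever $y\in\omega^s(x)$, and $d(f^nx,f^ny)\le C\alpha^{s(x,y)-n}$ whenever $x,y$ lie on a common $\omega\in\Gamma^u$ and $0\le n\le s(x,y)$. I would prove (4) first, since (1)--(3) rest on it. Since $\hat\Theta x\in\omega^s(x)$ we have $d(f^ix,f^i\hat\Theta x)\le C\alpha^i$, so each summand of $a(x)=\sum_{i\ge0}\big(\phi(f^ix)-\phi(f^i\hat\Theta x)\big)$ is $O(\alpha^{\epsilon i})$; summing the geometric series shows $a$ is well defined and $\sup_\Lambda|a|<\infty$. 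For the modulus of continuity, fix $x,y$ on a common $\omega\in\Gamma^u$, put $s=s(x,y)$, and note that $\hat\Theta x\in\omega^s(x)$ and $\hat\Theta y\in\omega^s(y)$ lie on $\hat\omega$ with $s(\hat\Theta x,\hat\Theta y)=s$. Split the $i$-th term of $a(x)-a(y)$ as $\big(\phi(f^ix)-\phi(f^iy)\big)-\big(\phi(f^i\hat\Theta x)-\phi(f^i\hat\Theta y)\big)$ when $i<s/2$, each bracket then being $O(\alpha^{\epsilon(s-i)})=O(\alpha^{\epsilon s/2})$, and as $\big(\phi(f^ix)-\phi(f^i\hat\Theta x)\big)-\big(\phi(f^iy)-\phi(f^i\hat\Theta y)\big)$ when $i\ge s/2$, each bracket then being $O(\alpha^{\epsilon i})$ with tail sum $O(\alpha^{\epsilon s/2})$. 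Absorbing the polynomial-in-$s$ factor into a slightly larger base yields $|a(x)-a(y)|\le 4C\alpha^{s(x,y)/2}$ on each $\omega$.

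For (1), I would invoke absolute continuity of the stable holonomy with the classical product formula $\frac{d(\Theta_{\omega,\omega'})_*\mu_\omega}{d\mu_{\omega'}}(x')=\prod_{i\ge0}\frac{\det Df^u(f^ix')}{\det Df^u(f^ix)}$, where $x=\Theta_{\omega,\omega'}^{-1}(x')$, so $x'\in\omega^s(x)$ and hence $\hat\Theta x'=\hat\Theta x$. Then $\frac{d(\Theta_{\omega,\omega'})_*m_\omega}{d\mu_{\omega'}}(x')=e^{a(x)}\prod_{i\ge0}\frac{\det Df^u(f^ix')}{\det Df^u(f^ix)}=e^{a(x)+\sum_{i\ge0}(\phi(f^ix')-\phi(f^ix))}=e^{a(x')}$, which is the density of $m_{\omega'}$ at $x'$; this is precisely the role of the correction term $a$. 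For (2), with $y\in\omega^s(x)$, $x\in\omega\cap\Lambda_i$, $\tilde\omega$ the leaf through $y$, $\omega'=f^{R_i}(\omega\cap\Lambda_i)$ and $\tilde\omega'=f^{R_i}(\tilde\omega\cap\Lambda_i)$, one uses that $f^{R_i}$ maps stable leaves to stable leaves, which gives the intertwining $f^{R_i}\circ\Theta_{\omega,\tilde\omega}=\Theta_{\omega',\tilde\omega'}\circ f^{R_i}$ on $\omega\cap\Lambda_i$; combining this with the holonomy-invariance of the reference measures from (1) and the chain rule for Jacobians shows $J^u(f^R)$ is constant on $\omega^s(x)\cap\Lambda_i$.

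For (3), since $dm_\omega/d\mu_\omega=e^a$ and the Riemannian unstable Jacobian of $f^{R_i}$ along $\omega$ is $\prod_{j=0}^{R_i-1}\det Df^u(f^jx)$, one has
\[
\log J^u(f^R)(x)=a(f^{R_i}x)-a(x)+\sum_{j=0}^{R_i-1}\phi(f^jx),\qquad x\in\Lambda_i\cap\omega .
\]
Subtracting the analogous identity for $y\in\Lambda_i\cap\omega$: the points $f^{R_i}x,f^{R_i}y$ lie on the common leaf $\omega'$, so the $a$-differences are bounded by (4) using the shift $s(f^{R_i}x,f^{R_i}y)=s(x,y)-R_i$, while $\sum_{j=0}^{R_i-1}|\phi(f^jx)-\phi(f^jy)|\le\mathrm{const}\sum_{j=0}^{R_i-1}\alpha^{\epsilon(s(x,y)-j)}\le\mathrm{const}\cdot\alpha^{\epsilon(s(x,y)-R_i)}$, where $s(x,y)\ge R_i$ because $x,y\in\Lambda_i$. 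Hence $|\log J^u(f^R)(x)-\log J^u(f^R)(y)|\le\mathrm{const}\cdot\alpha^{c\,s(f^{R_i}x,f^{R_i}y)}$ for some $c>0$; passing to $c=1/2$ if necessary and using $|e^t-1|\le 2|t|$ for bounded $t$ gives (3) with a suitable $C_1$.

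The only genuinely delicate point is (4) — more precisely, getting the infinite series defining $a$ to converge with a quantitative modulus of continuity — which is exactly where the H\"older regularity of $\log\det Df^u$ and the separation-time contraction estimates of \cite{young tower} enter; parts (1)--(3) are then bookkeeping with the product formula for stable holonomies and the chain rule. Since all the needed ingredients are precisely the content of hypotheses \textbf{(P1)}--\textbf{(P5)} of \cite{young tower}, the most economical route is simply to cite \cite[Lemma~1]{young tower}.
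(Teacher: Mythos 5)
The paper offers no proof of this lemma at all: it is quoted verbatim from \cite[Lemma 1]{young tower}, and your sketch is exactly the standard argument from that reference (establish (4) first from the H\"older/contraction estimates, then the holonomy product formula for (1), the intertwining $f^{R_i}\circ\Theta=\Theta\circ f^{R_i}$ for (2), and distortion bookkeeping for (3)), ending with the same citation the paper itself relies on. Apart from minor looseness in the exponent bookkeeping for (3)--(4) (in Young's formulation the hypotheses \textbf{(P4)}--\textbf{(P5)} are stated so that the $\alpha^{s(x,y)/2}$ bound with the given $\alpha$ comes out directly, without ``enlarging the base'' or passing from an exponent $c$ to $1/2$), your proposal is correct and takes essentially the same route as the paper.
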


We say $(\Lambda, R)$ has {\it exponential
return times} if there exist $C_0>0$ and $\theta_0>0$ such that for
all $\omega \in \Gamma^u$, $\mu_\omega\{R>n\} \le C_0 \theta_0^n$
for all $n \ge 0$. This property (in fact, integrability of $R$ is sufficient)
plus the requirement that g.c.d.$\{R\} = 1$
guarantees that $f$ has a unique SRB measure $\musrb$ with $\musrb(\Lambda)>0$
(\cite[Theorem 1]{young tower}).

\medskip
\noindent {\bf ``Hyperbolic'' Markov towers:} Given $f$ with a generalized horseshoe
$(\Lambda, R)$, it is shown in \cite{young tower} that one can associate
a {\it Markov extension}
$F: \Delta \to \Delta$ which focuses on the return dynamics to $\Lambda$.
The set $\Delta$ is the disjoint union
$\cup_{\ell \ge 0} \Delta_\ell$ where
$\Delta_\ell$, the $\ell^{\mbox{\tiny th}}$ level of the tower, is defined
to be $\Delta_\ell = \{ (x, \ell) : x \in \Lambda, R(x) > \ell \}$, and
$F$ is defined by $F(x,\ell) = (x, \ell+1)$ for $\ell < R(x)-1$ and
$F(x,\ell) = (f^Rx, 0)$ when $\ell = R(x)-1$; that is to say, $F$ maps
$(x,0)$ successively up the tower until the return time for $x$ is reached.
A projection $\pi: \Delta \to M$ with $\pi \circ F = f \circ \pi$ is uniquely
defined assuming the natural identification of $\Delta_0$ with $\Lambda$.

For notational simplicity, we will often refer to
a point in $\Delta$ as $x$ when the level $\ell$ is made clear by context.

The separation function $s(\cdot, \cdot)$ above defines a countable partition
$\{\Delta_{\ell,j}\}$ on $\Delta$: for $x,y \in \Delta_0$,
$s(x,y) = \inf \{ n>0: \mbox{$F^nx, F^ny$ lie in
different $\Delta_{\ell,j}$} \}$. It is easy to see that $\{\Delta_{\ell,j}\}$ is a
Markov partition for $F$ with $\Delta_0$ as a single element.
Let $\dlj^* = \dlj \cap F^{-1}\Delta_0$. Note that
$F|_{\dlj^*}$ maps $\dlj^*$ bijectively onto a $u$-subset of $\Delta_0$,
and if we rename the collection $\{F^{-\ell}\dlj^*\}$ as $\{(\Delta_0)_i\}$,
then $\{(\Delta_0)_i\}$  is a countable collection of closed subsets of
$\Delta_0$ the $\pi$-images of which are precisely the $\{\Lambda_i\}$ in
the paragraph on generalized horseshoes.

Stable and unstable sets for $\Delta_{\ell, j}$ are defined as follows:
Let $\Gamma^s(\pi(\Delta_{\ell, j}))$ and
$\Gamma^u(\pi(\Delta_{\ell, j}))$ be the stable and unstable families
defining the hyperbolic product set $\pi(\Delta_{\ell, j})$. We say
$\tilde \omega \subset \Delta_{\ell, j}$ is an {\it unstable set} of
$\Delta_{\ell, j}$ if $\pi(\tilde \omega) = \omega \cap \pi(\Delta_{\ell, j})$
for some $\omega \in \Gamma^u(\pi(\Delta_{\ell, j}))$. Since there can be
no ambiguity, we will use $\Gamma^u(\Delta_{\ell, j})$ to denote the set
of all such $\tilde \omega$,
and let $\Gamma^u(\Delta) = \cup_{\ell,j} \Gamma^u(\Delta_{\ell, j})$.
{\it Stable sets} of $\Delta_{\ell, j}$ and $\Gamma^s(\Delta)$ are defined similarly.

Two reference measures $\tilde \mu_\omega$ and $\tilde m_\omega$ are
defined on $\omega \in \Gamma^u(\Delta)$ as follows: On $\Delta_0$,
identifying $\omega \in \Gamma^u(\Delta_0)$ with
$\omega' \cap \Lambda$ for $\omega' \in \Gamma^u(\Lambda)$,
$\tilde \mu_\omega$ is simply $\mu_{\omega'}|_{\omega' \cap \Lambda}$
and $\tilde m_\omega$ is $m_{\omega'}$. Once these measures are defined
on $\omega \in \Gamma^u(\Delta_0)$, there is exactly one way to
extend them to $\cup_{\ell>0} \Gamma^u(\Delta_\ell)$
so that if $J^u_\mu(F)$ and $J^u(F)$ denote the Jacobians of $F$ on
unstable sets with respect to $\tilde \mu_\omega$ and $\tilde m_\omega$
respectively, then $J^u(F) = J^u_\mu(F) = 1$ on $\Delta \setminus F^{-1}(\Delta_0)$.
Notice also that if we extend $a$ to $\cup_{\ell>0} \Delta_\ell$ by
$a(x)=a(F^{-1}x)$, then $d\tilde m_\omega = e^a \, d\tilde \mu_\omega$
on all $\omega \in \Gamma^u(\Delta)$.

\medskip
\noindent {\bf Quotient ``expanding" towers:}  Associated with
$F:\Delta \to \Delta$ is a quotient tower $\barF: \bDelta \to \bDelta$
obtained by collapsing stable sets
to points, i.e., $\bDelta = \Delta/\! \!\sim$ where for
$x,y \in \Delta$, $x \sim y$ if and only if $y \in \omega (x)$ for some
$\omega \in \Gamma^s(\Delta)$. Let
$\bpi: \Delta \to \bDelta$ be the projection defined by $\sim$.
We will use the notation
 $\bDelta_\ell =\bpi (\Delta_\ell), \bDelta_{\ell,j} = \bpi(\dlj)$, and so on.

Lemma~\ref{lemma:jacobian}(1) and (2) together imply that
there is a natural measure $\bm$ on
$\bDelta$ with respect to which  the Jacobian of $\barF$,
$J\barF$, is well defined: specifically, we have
$J\barF \equiv 1$ on $\bDelta \setminus
\barF^{-1}(\bDelta_0)$, and for $ x \in \bDelta_0$, $J\barF^R(x) = J^u(f^R)(y)$ for any $y \in \omega^s(x)$.
Finally, with the definition of separation time inherited
from $\Delta_0$, the distortion bound in Lemma 5.1(3) holds for
$J\barF^R$ on $\bDelta_0$.

%%%%%%%%%%%
\bigskip
\noindent {\bf II. Systems with holes}

\medskip
The setting is as in Paragraph I. We fix an open set $H \subset M$ and call it ``the hole."

\medskip
\noindent {\bf Towers with Markov holes} (following [DWY]):
Let  $(F, \Delta)$ be the tower arising
from the horseshoe $(\Lambda, R)$. We say $(F, \Delta)$
{\em respects the hole} $H$ if the following conditions are satisfied:
\begin{enumerate}  \vspace{-3 pt}
	\item[{\bf (H.1)}]  $\pi^{-1} H$ is the union of countably many elements of
	$\{\Delta_{\ell,j}\}$.  \vspace{-6 pt}
	\item[{\bf (H.2)}]  $\pi(\Delta_0) \subset M\setminus H$, and there exist
	$\delta > 0$, $\xi_1 >1$ such that all
	$x \in \pi(\Delta_0)$ satisfy $d(f^nx, \Si \cup \partial H) \geq \delta \xi_1^{-n}$ for all
	$n \geq 0$.   \vspace{-3 pt}
\end{enumerate}	
Because of {\bf (H.1)}, we refer to $\pi^{-1}H$, the hole on $\Delta$, as
a ``Markov hole."  This implies in particular that for every $i$ and $\ell$
with $0\le \ell < R_i$, $f^\ell(\Lambda_i)$ either does not meet $H$ or it
is completely contained in $H$. Equivalently, on the tower $(F, \Delta)$,
each $(\Delta_0)_i$ either falls into the hole completely on its way up the tower
or returns to $\Delta_0$ intact.

Earlier on we have used $(f,M;H)$ to denote an open system. Observe that
$(F, \Delta; \pi^{-1}H)$ and $(\overline F, \bDelta; \bH)$ where
$\bH = \bpi(\pi^{-1}H)$ are open systems of the same type. As before, we write
$$\Delta^n = \cap_{i=0}^n F^{-i} (\Delta \setminus \pi^{-1}H)
= \{ x \in \Delta: F^ix \notin \pi^{-1} H \mbox{ for } 0 \le i \le n \}\ ,$$
and $\Delta^\infty = \cap_{i=0}^\infty \Delta^n$. In particular,
$\Delta^0 = \Delta \setminus \pi^{-1}H$. The notation
$\F^n = F^n|_{\Delta^n}$ for $n \geq 1$ is sometimes used to distinguish
between the system with and without holes. Corresponding objects for $(\barF, \bDelta; \bH)$ are denoted by $\bDelta^n$ and $\bDelta^\infty$ etc.

\bigskip
\noindent {\bf III. Abstract towers and a notion of spectral gap}

\medskip
In Paragraphs I and II, we considered towers that arise from generalized
horseshoes. Towers can, in fact, be defined in the abstract.
Leaving details to the reader, an abstract expanding tower is  a dynamical system
$\barF : \bDelta \to \bDelta$ where $\bDelta_0$ is a compact set,
$\bDelta = \cup_{\ell \ge 0} \bDelta_\ell$
has a tower structure, $\barF$ moves points up the tower until their
return time $R$; there is a  countable Markov partition
$\{\bDelta_{\ell,j}\}$ on $\bDelta$ which is a generator
and a reference measure $\bm$
with respect to which we have (i) $J\barF = 1$ on $\bDelta \setminus
\barF^{-1}(\bDelta_0)$ and (ii) modulo a set of $\bm$-measure zero,
$\bDelta_0 = \cup_i (\bDelta_0)_i$ where $\barF^R$ maps
each closed set $(\bDelta_0)_i$
homeomorphically onto $\bDelta_0$
with the distortion bound
in Lemma 5.1(3).
Abstract expanding towers with Markov holes
$\bH$ are defined in the obvious way, as are abstract hyperbolic towers.

Given $(\barF, \bDelta)$ with $\bm\{R>n\} < C_0 \theta_0^n$
for  some $C_0 \ge 1$ and $\theta_0<1$,\footnote{Our default rule is to use
the same symbol for corresponding objects for $f, F$ and $\barF$ when
no ambiguity can arise given context. Thus $R$ is the name of the return time
function on $\Lambda, \Delta_0$ and $\bDelta_0$.}
 we fix $\beta$ with $1>\beta > \max \{ \theta_0, \sqrt{\alpha} \}$ where
 $\alpha$ is as in Lemma~\ref{lemma:jacobian}(3), and define
 a symbolic metric on $\bDelta$
 by $d_\beta(x,y) = \beta^{s(x,y)}$.
Since $\beta > \sqrt{\alpha}$, Lemma~\ref{lemma:jacobian}(3) implies that
$J\barF$ is log-Lipshitz with respect to this metric.
Let ${\cal B} = \{ \psi \in L^1(\bDelta, \bm): \|\psi\| < \infty \}$
where $\|\psi\| = \|\psi\|_\infty + \|\psi\|_{\lip}$ and
\[
\|\psi\|_\infty = \sup_{\ell, j} \sup_{x \in \bDelta_{\ell,j}}
                      |\psi(x)| \beta^\ell ,  \qquad
\|\psi\|_{\lip} = \sup_{\ell,j} \mbox{Lip}(\psi |_{\bDelta_{\ell,j}}) \beta^\ell \ .
\]
Lip$(\cdot)$ in the last displayed formula
is with respect to the symbolic metric $d_\beta$, and
$({\cal B}, \| \cdot\|)$ so defined is a Banach space.

Now consider the open system $(\barF, \bDelta; \bH)$ where $\bH$ is
a Markov hole. Following [BDM],
 we let $\bLp$ denote the transfer operator associated with
$\barF|_{\bDelta^1}$ defined on $\B$, i.e., for $\psi \in \B$
and $x \in \bDelta$,
\[
\bLp \psi(x) = 1_{\bDelta^0}(x)
\sum_{y \in \bDelta^0 \cap \barF^{-1}\{x\}} \psi (y) (J\barF(y))^{-1} .
\]
We say $(\barF, \bDelta; \bH)$ has a {\it spectral gap} if
\begin{itemize} \vspace{-3 pt}
 \item [(i)] $\bLp$ is quasi-compact
with a unique eigenvalue $\ra$ of maximum modulus, and
\vspace{-6 pt}
 \item [(ii)] $\ra$ is real with $\beta < \ra < 1$; it is
simple, with a one-dimensional eigenspace. \vspace{-3 pt}
\end{itemize}
Notice that if $h\in\B$ satisfies $\bLp h = \ra h$, then $h \bm$
defines a conditionally invariant measure for $\barF$ with eigenvalue $\ra$,
i.e. $\barF_*(h\bm)|_{\bar \Delta \setminus \bar H} = \ra \cdot h\bm$.

Finally, if $(F, \Delta)$ is an abstract hyperbolic tower
that projects onto $(\barF, \bDelta)$, and $\tilde H \subset \Delta$ is
a Markov hole which projects onto $\bH$, then
we say  $(F, \Delta; \tilde H)$
has a spectral gap if $(\barF, \bDelta; \bH)$ does.

\medskip
The conditions {\bf (A.1)}--{\bf (A.4)} in Sect.~\ref{tower results}
have now been made precise.

%%%%%%%%%%%%%%%%%%%%%%%%%%%%%%%%%
\subsection{Variational principles for $(\barF, \bDelta; \bH)$ and
$(F, \Delta; \pi^{-1}H)$}

\label{tower theorems}

As noted earlier, our aim in this section is
to prove, as an intermediate step for Theorems D and E,
a version of the corresponding results for the open system
$(F, \Delta; \pi^{-1}H)$. These results are
deduced from some previously known results for $(\barF, \bDelta; \bH)$,
which we first recall.

\bigskip
\noindent {\bf I. Results for expanding towers}

\medskip
We consider here an abstract expanding tower $(\barF, \bDelta; \bH)$ with
Markov holes. The following notation is used:
Let $\B$ be the function space above,
and define $\B_0$ to be the set of bounded functions in $\B$ whose Lipschitz
constant is also bounded, i.e.\ the definition of $\B_0$ is the same as that
of $\B$, but with the weights $\beta^\ell$ removed.
Let $\M_{\barF}(\bDelta^\infty)$ denote the set of invariant measures
on $\bDelta^\infty$,
and define
$$\G_{\bDelta} = \{ \bareta \in \M_{\barF}(\bDelta^\infty) \mid
\bareta(\log J \barF) < \infty \}\ .$$

\smallskip
\begin{theorem} {\rm (mostly \cite{bdm}; see Remark below)}
\label{thm:exp conv}
Assume $\bm \{R>n\} < C_0 \theta_0^n$, and
$(\barF, \bDelta; \bH)$ has a spectral gap with largest eigenvalue $\ra$.
Let $h_* \in \B$ be the unique eigenfunction of $\ra$ with $\int h_* d\bm=1$.
Then:
\begin{enumerate}
\item[(a)] There exist constants
$D>0$ and $\tau<1$ such that for all $\psi \in \B$,
\[
\| \ra^{-n}\bLp^n\psi - d(\psi)h_*\| \leq D\|\psi \| \tau^n, \; \; \;
\mbox{where }
d(\psi) = \lim_{n\to \infty} \ra^{-n} \int_{\bDelta^n} \psi \, d\bm < \infty .
\]
\end{enumerate}
Assume additionally

{\rm (*)}:  $\exists \ \bar C>0$ and
$\bar \theta \in (\ra^{-1} \theta_0, 1)$
such that $\log J\barF^n|_{\bDelta_0 \cap \{R=n\}} \le
\bar C \bar \theta^{-n}$ for all $n \ge 0$.
\begin{enumerate}
\item[(b)] $ \displaystyle
\log \ra = \pa_{\G_{\bDelta}} := \sup_{\bareta \in \G_{\bDelta}} \left\{ h_{\bareta}(\barF) - \int_{\bDelta} \log J\barF d\bareta \right\}\ . $
\item[(c)] Let $\bnu$ be defined by \[
\bnu(\varphi) = \lim_{n\to \infty} \ra^{-n} \int_{\bDelta^n} \varphi \, h_* \, d\bm
\qquad \mbox{for all $\varphi \in \B_0$ } .
\]
Then $\bnu \in \G_{\bDelta}$ and attains the supremum in (b).
\item[(d)] Other properties of $\bnu$ are that $(\barF, \bnu)$  is ergodic,
and  enjoys exponential decay of correlations between
$\vf$ and $\psi \circ \barF^n$ for  $\vf \in \B_0$ and $\psi \in L^\infty$.
\end{enumerate}
\end{theorem}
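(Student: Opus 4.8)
Part (a) I would treat as a direct consequence of the spectral gap. Since $\bLp$ is quasi-compact on $\B$ with a simple leading eigenvalue $\ra$ and one-dimensional eigenspace $\R h_*$, write $\bLp=\ra(\Pi+N)$, where $\Pi\psi=\ell_*(\psi)h_*$ is the spectral projection ($\ell_*\in\B^*$ the dual eigenfunctional, normalized by $\ell_*(h_*)=1$), $\Pi N=N\Pi=0$, and the spectral radius of $N$ is some $\tau<1$; iteration gives $\|\ra^{-n}\bLp^n\psi-\ell_*(\psi)h_*\|\le D\|\psi\|\tau^n$. I would then set $d(\psi):=\ell_*(\psi)$ and identify it with the stated limit via the change-of-variables identity $\int_{\bDelta}\bLp^n\psi\,d\bm=\int_{\bDelta^n}\psi\,d\bm$ (using $J\barF=1$ off $\barF^{-1}\bDelta_0$ and that $\bm$ is the reference measure), obtaining $\ra^{-n}\int_{\bDelta^n}\psi\,d\bm=\int_{\bDelta}\ra^{-n}\bLp^n\psi\,d\bm\to d(\psi)\int h_*\,d\bm=d(\psi)$; finiteness of the limit and, taking $\psi\equiv1$ and checking $d(1)>0$, the comparison $\bm(\bDelta^n)\asymp\ra^n$ follow at once. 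This is essentially \cite{bdm}, part (a).

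For the variational statements I would establish the two inequalities separately. First, for any $\bareta\in\G_{\bDelta}$ I want $\log\ra\ge h_{\bareta}(\barF)-\int\log J\barF\,d\bareta$. Since $\{\bDelta_{\ell,j}\}$ is a generating partition and $\bareta(\log J\barF)<\infty$ (so $h_{\bareta}(\barF)<\infty$ and Birkhoff applies), Shannon--McMillan--Breiman provides, for each $\ve>0$ and all large $n$, at least $e^{n(h_{\bareta}-\ve)}$ surviving $n$-cylinders of positive $\bareta$-measure along which the Birkhoff average of $\log J\barF$ is within $\ve$ of $\int\log J\barF\,d\bareta$ and the visited tower level is $o(n)$; the distortion bound built into the abstract tower (the inheritance on $\bDelta_0$ of Lemma~\ref{lemma:jacobian}(3)) then makes the $\bm$-measure of each such cylinder $\gtrsim e^{-n(\int\log J\barF\,d\bareta+2\ve)}$. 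Summing and using $\bm(\bDelta^n)\asymp\ra^n$ yields $\ra^n\gtrsim e^{n(h_{\bareta}-\int\log J\barF\,d\bareta-3\ve)}$, hence the inequality as $\ve\to0$, so $\pa_{\G_{\bDelta}}\le\log\ra$.

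For the reverse inequality and its realization I would study $\bnu$ directly. By (a) applied to $\varphi h_*\in\B$ (for $\varphi\in\B_0$) together with the identity above, $\bnu(\varphi)=d(\varphi h_*)$ is a bounded positive linear functional with $\bnu(1)=d(h_*)=1$, so $\bnu$ is a Borel probability measure, supported on $\bDelta^\infty=\cap_n\bDelta^n$ by construction; $\barF$-invariance follows from the conditional invariance $\barF_*(h_*\bm)|_{\bDelta\setminus\bH}=\ra\,h_*\bm$, which gives $\int_{\bDelta^n}(\varphi\circ\barF)h_*\,d\bm=\ra\int_{\bDelta^{n-1}}\varphi h_*\,d\bm$ and hence $\bnu(\varphi\circ\barF)=\bnu(\varphi)$. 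The crucial point is $\bnu\in\G_{\bDelta}$: using invariance, $\bnu(\bDelta_\ell)=\bnu(\bDelta_0\cap\{R>\ell\})$, and I expect one can show (from the estimate of (a) and distortion) that $\bnu|_{\bDelta_0}$ has bounded density with respect to $\bm|_{\bDelta_0}$, so that $\bnu(\bDelta_\ell)\lesssim\theta_0^\ell$; since $\log J\barF$ is supported on $\barF^{-1}\bDelta_0$ and, by hypothesis (*), is $\le\bar C\bar\theta^{-\ell}$ on the portion of $\barF^{-1}\bDelta_0$ reaching up from level $\ell$, while $\bar\theta>\ra^{-1}\theta_0>\theta_0$, this gives $\int\log J\barF\,d\bnu\le\bar C\sum_\ell\bar\theta^{-\ell}\theta_0^\ell<\infty$. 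With $h_{\bnu}(\barF)<\infty$ and $\log J\barF\in L^1(\bnu)$ secured, I would finish by a local computation: Shannon--McMillan--Breiman gives $-\tfrac1n\log\bnu(C_n(x))\to h_{\bnu}(\barF)$, while the transfer-operator identity shows $\ra^{-m}\bLp^m(\mathbf{1}_{C_n(x)}h_*)$ equals, up to a bounded multiplicative distortion factor, $\ra^{-n}h_*(x)e^{-\sum_{i<n}\log J\barF(\barF^i x)}$ times a quantity converging to $d(\mathbf{1}_{\barF^nC_n(x)})$; combined with Birkhoff this yields $-\tfrac1n\log\bnu(C_n(x))\to\int\log J\barF\,d\bnu-\log\ra$, i.e.\ $h_{\bnu}(\barF)=\int\log J\barF\,d\bnu+\log\ra$, so $P_{\bnu}=\log\ra$. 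Together with the previous paragraph this proves (b) and (c). For (d), the transfer operator of $(\barF,\bnu)$ is $\tilde\bLp\varphi=\ra^{-1}h_*^{-1}\bLp(h_*\varphi)$, conjugate by $\psi\mapsto h_*\psi$ to $\ra^{-1}\bLp$ and hence with a spectral gap, leading eigenvalue $1$ (simple, eigenfunction $\equiv1$); so $(\barF,\bnu)$ is mixing, in particular ergodic, and the contraction of $\tilde\bLp^n$ on $\{\varphi:\bnu(\varphi)=0\}$ gives $|\int\varphi\,(\psi\circ\barF^n)\,d\bnu-\bnu(\varphi)\bnu(\psi)|\le C\|\varphi\|_{\B_0}\|\psi\|_\infty\tau^n$.

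The main obstacle, and essentially the only step that is neither bookkeeping nor a direct citation of \cite{bdm}, is the control of $\bnu$ in the abstract-tower setting: proving the bounded-density estimate for $\bnu|_{\bDelta_0}$ (hence the exponential tail of $\bnu$), combining it with hypothesis (*) to conclude $\bnu\in\G_{\bDelta}$, and pushing the uniform distortion along cylinders through the local Shannon--McMillan--Breiman/Birkhoff computation that pins $P_{\bnu}$ to $\log\ra$ (the Jacobian of the return map being unbounded over $\bDelta_0$ is exactly what (*) is there to tame). This is what the Remark following the statement is meant to supply; everything else is standard spectral and ergodic-theoretic machinery.
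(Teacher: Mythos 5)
Your overall architecture for (b)--(c) (direct Shannon--McMillan--Breiman counting for the upper bound, a local Gibbs-type computation for $P_{\bnu}=\log\ra$) is genuinely different from the paper, which instead conjugates the return map $\barF^R:\bDelta_0^\infty\circlearrowleft$ to a full shift on countably many symbols and imports Sarig's thermodynamic formalism for the potential $\phi=-\log(\ra^RJ\barF^R)$ (finiteness of the Gurevich pressure when $|\Lp_\phi 1|_\infty<\infty$, the countable-shift variational principle, and the fact that a Gibbs measure with $\bareta_0(-\phi)<\infty$ realizes the pressure), then returns to the tower via Abramov's formula. But your sketch breaks at exactly the point you flag as crucial. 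You propose to put $\bnu$ in $\G_{\bDelta}$ by showing $\bnu|_{\bDelta_0}$ has bounded density with respect to $\bm|_{\bDelta_0}$. This is impossible: mass escapes at rate $\ra<1$, so $\bm(\bDelta^\infty)=0$ while $\bnu(\bDelta^\infty)=1$; $\bnu$ is singular with respect to $\bm$, on $\bDelta_0$ as everywhere. The correct substitute is the cylinder-level Gibbs estimate for the induced measure (Lemma~\ref{lem:nu bounds}, taken from \cite{bdm}), which via the distortion bound of Lemma~\ref{lemma:jacobian}(3) gives $\bnu_0(Z)\le C\ra^{-R(Z)}\bm(Z)$ and hence $\bnu_0\{R=n\}\lesssim(\theta_0/\ra)^n$ --- note the extra factor $\ra^{-n}$ that your claimed tail $\bnu(\bDelta_\ell)\lesssim\theta_0^\ell$ is missing. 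That factor is precisely why hypothesis (*) demands $\bar\theta>\ra^{-1}\theta_0$ rather than merely $\bar\theta>\theta_0$: the series controlling $\bnu(\log J\barF)$ is $\sum_n(\theta_0\ra^{-1}\bar\theta^{-1})^n$, not $\sum_n(\theta_0\bar\theta^{-1})^n$. The conclusion $\bnu\in\G_{\bDelta}$ is true, but your route to it cannot be repaired without replacing the density claim by the Gibbs property.

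There are two further unproved ingredients. In the upper bound you assert parenthetically that $\bareta(\log J\barF)<\infty$ forces $h_{\bareta}(\barF)<\infty$; on a noncompact tower with a countable generator this is part of what must be proved (the paper obtains it from Sarig's finiteness of $P_G(\phi)$ together with the countable-shift variational principle), and without it SMB for $\{\bDelta_{\ell,j}\}$ is not available. Moreover your cylinder volume bound $\bm(C_n)\gtrsim e^{-n(\int\log J\barF\,d\bareta+2\ve)}$ needs the $\bm$-measure of the image cell $\barF^nC_n=\bDelta_{\ell_n,j_n}$ to be $e^{-o(n)}$; knowing only that the level $\ell_n$ is $o(n)$ does not control the index $j_n$, so the estimate should be run along return times to $\bDelta_0$ (which have positive frequency), a point your sketch does not address. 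Finally, for (d) the operator $\varphi\mapsto\ra^{-1}h_*^{-1}\bLp(h_*\varphi)$ is not the transfer operator of $(\barF,\bnu)$, again because $\bnu\ne h_*\bm$ and is singular with respect to $\bm$; ergodicity and decay of correlations for $\bnu$ are obtained in \cite{bdm} from the asymptotics of $\ra^{-n}\bLp^n$ and the defining limit of $\bnu$, which is how the paper (and the statement itself) treats parts (a) and (d), namely by citation. In short: your part (a) is fine, your variational scheme is a legitimate alternative in outline, but the two finiteness statements and the tail estimate for $\bnu$ --- the actual content of the paper's amendment to \cite{bdm} --- are either asserted without proof or justified by a false claim.
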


\medskip
\noindent {\it Remark.} The restriction $\bareta(\log J \barF) < \infty$, which appears in the
definition of $\G_{\bDelta}$, is omitted in [BDM], as is the condition (*),
which is extremely mild,\footnote{We observe that (*)
holds  for all the towers constructed in [BDM]; indeed, in that setting,
$\log J\barF^n|_{\bDelta_0 \cap \{ R = n \}} \le C n$ and all measures
$\bareta \in \M_{\barF}(\bDelta^\infty)$ satisfy $\bareta(\log J\barF)<\infty$.}
but a condition of this type is needed to ensure that
$\bnu \in \G_{\bDelta}$.
Since a main novelty of Theorem 3 is
the noncompactness of the phase space $\bDelta$, and
these conditions are directly connected to the finiteness of various
quantities, we will provide
sketches of corrected proofs of Theorem 3(b) and (c)
 in Sect. 5.3. The proofs of parts (a) and (d) in [BDM] are
unaffected.

\vskip .2in
\noindent {\bf II. Results for hyperbolic towers arising from $(f,M;H)$}

\medskip
We now return to the setting of Sect. 2.1, where $f:M\circlearrowleft$ is
a $C^{1+\epsilon}$ diffeomorphism with or without singularities. Let
$H \subset M$, and assume that the open system $(f,M;H)$
satisfies {\bf (A.1)}--{\bf (A.4)} in Sect. 2.2.

We first recall the following result proved in \cite{dwy} as part of our study of
billiard systems with holes.
Let $\tB$ be the class of measures $\sigma$ on $\Delta$ with
the following properties:
(i) $\sigma$ has absolutely continuous conditional measures
on unstable leaves; and
(ii) $\bpi_* \sigma = \bpsi_\sigma d\bm$ for some
$\bpsi_\sigma \in \B$.
% with $d(\bpsi) >0$ where $d(\cdot)$ is as in Theorem 3(a).

\begin{theorem}[\cite{dwy}]
\label{thm:hyp conv}
Under the conditions above, the following hold for $(F, \Delta; \pi^{-1}H)$:

\begin{itemize}
\item[(a)] For all $\sigma \in \tB$ with $d(\bpsi_\sigma) > 0$, where $d(\bpsi_\sigma)>0$ is as in
Theorem~3(a),
\[
\log \ra = \lim_{n\to \infty} \frac{1}{n} \log \sigma (\Delta^n) \quad i.e. \quad
\rho(\sigma)= \log \ra\ .
\]
\item[(b)] There exists a conditionally
invariant distribution $\tmu_* \in \tB$, such that $\F_* \tmu_* = \ra \, \tmu_*$,
$\bpi_* \tmu_* = h_* \bm$, and for which the following hold:
For all $\sigma \in \tB$,
\[
\lim_{n \to \infty} \ra^{-n} \F_*^n\sigma = d(\bpsi_\sigma) \cdot \tmu_*\, ,
\; \; \;  \mbox{and if $d(\bpsi_\sigma)>0$, then } \; \; \;
\lim_{n \to \infty} \frac{\F_*^n\sigma}{\F_*^n\sigma(\Delta)} = \tmu_*
\]
where the convergence  is in the weak* topology.
\end{itemize}
\end{theorem}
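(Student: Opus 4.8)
The plan is to derive Theorem~\ref{thm:hyp conv} from the expanding-tower results of Theorem~\ref{thm:exp conv} by exploiting the product structure of $\Delta$ together with the Markov property of the hole. The key reduction for part~(a) is that $\Delta^n$ is saturated by complete stable sets: since $\pi^{-1}H$ is a union of partition elements $\Delta_{\ell,j}$, each of which is $\bpi$-saturated, we have $\pi^{-1}H=\bpi^{-1}(\bH)$, and combining this with $\bpi\circ F=\barF\circ\bpi$ gives $\Delta^n=\bpi^{-1}(\bDelta^n)$ for all $n$. Hence for $\sigma\in\tB$ one has $\sigma(\Delta^n)=(\bpi_*\sigma)(\bDelta^n)=\int_{\bDelta^n}\bpsi_\sigma\,d\bm$, and since $\bpsi_\sigma\in\B$, Theorem~\ref{thm:exp conv}(a) yields $\ra^{-n}\int_{\bDelta^n}\bpsi_\sigma\,d\bm\to d(\bpsi_\sigma)$; when $d(\bpsi_\sigma)>0$ this gives $\frac{1}{n}\log\sigma(\Delta^n)\to\log\ra$, which is part~(a).

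For part~(b) I would first construct $\tmu_*$ as the canonical lift to $\Delta$ of the conditionally invariant density $h_*\bm$ on $\bDelta$: prescribe on each $\omega\in\Gamma^u(\Delta_0)$ the conditional measure $\tilde m_\omega$ (equivalently, density $e^a$ relative to $\tilde\mu_\omega$), glued across $\Delta_0$ along stable leaves by the holonomy maps, which preserve these reference measures by Lemma~\ref{lemma:jacobian}(1), with transverse component the one induced from $h_*\bm|_{\bDelta_0}$ via $\bpi$; then propagate the measure up the tower by $F$, consistently because $J^u(F)\equiv1$ off $F^{-1}\Delta_0$ --- this is the construction used for SRB measures on towers. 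By construction $\tmu_*\in\tB$, $\bpi_*\tmu_*=h_*\bm$ (so $\tmu_*$ is a probability measure), and $\F_*\tmu_*=\ra\,\tmu_*$, the last identity following from $\bLp h_*=\ra h_*$ on $\bDelta$ together with the fact that the unstable-Jacobian structure of $F$ descends to $J\barF$ under $\bpi$.

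To prove the convergence statements, fix $\sigma\in\tB$ and set $\nu_n=\ra^{-n}\F_*^n\sigma$, so that $\F_*\nu_n=\ra\,\nu_{n+1}$ and, projecting, $\bpi_*\nu_n=\ra^{-n}(\bLp^n\bpsi_\sigma)\,\bm\to d(\bpsi_\sigma)\,h_*\,\bm$ in $\B$ by Theorem~\ref{thm:exp conv}(a). It then remains to show that $\{\nu_n\}$ is tight on the noncompact space $\Delta$ --- the mass of $\nu_n$ on level $\ell$ is bounded, uniformly in $n$, by a constant times $\bm\{R>\ell\}$, which decays exponentially --- and that every weak* subsequential limit equals $d(\bpsi_\sigma)\,\tmu_*$. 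Such a limit lies in $\tB$, with the conditional densities on unstable leaves uniformly controlled via the distortion bound of Lemma~\ref{lemma:jacobian}(3); it projects to $d(\bpsi_\sigma)\,h_*\bm$ and is conditionally invariant with eigenvalue $\ra$; and the conditional densities of $\F_*^n\sigma$ on unstable leaves, being graph transforms whose distortion is controlled and whose dependence on the initial conditionals of $\sigma$ is suppressed by the backward contraction along unstable manifolds, converge to the canonical ones (those $\propto\tilde m_\omega$). These properties pin the limit down to $d(\bpsi_\sigma)\,\tmu_*$, so the full limit exists. Finally, when $d(\bpsi_\sigma)>0$, dividing by $\F_*^n\sigma(\Delta)=\sigma(\Delta^n)=\ra^n\big(d(\bpsi_\sigma)+o(1)\big)$ from part~(a) gives the normalized convergence.

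The main obstacle is this last convergence: one must show that the hyperbolic dynamics forgets the unstable-leaf conditionals of $\sigma$ and produces genuine convergence of the full measures on $\Delta$, uniformly over the countably many tower levels and $u$-branches, in a way that meshes cleanly with the $\B$-norm convergence on the quotient. This forces one to use the distortion bound of Lemma~\ref{lemma:jacobian}, the holonomy-invariance of the reference measures, and the exponential tail $\bm\{R>n\}<C_0\theta_0^n$ in concert; the noncompactness of $\Delta$, and the resulting need to establish tightness and to check that $\F_*$ behaves well with respect to weak* limits (which is where the Markov structure of the hole is used), form an additional technical layer.
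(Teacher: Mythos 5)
A preliminary remark: the paper itself offers no proof of this statement --- it is imported verbatim from \cite{dwy}, where it is established by an explicit construction of $\tmu_*$ and a contraction/coupling argument along stable leaves --- so your proposal has to stand on its own. Part (a) does: since $\pi^{-1}H$ is a union of elements $\dlj$, each of which is a union of stable sets, one has $\Delta^n=\bpi^{-1}(\bDelta^n)$, hence $\sigma(\Delta^n)=\int_{\bDelta^n}\bpsi_\sigma\,d\bm$, and Theorem~\ref{thm:exp conv}(a) gives the conclusion. That reduction is correct and is the natural one.

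Part (b), however, contains genuine gaps. First, the construction of $\tmu_*$ is miswired. Since $\bpi$ collapses stable sets, it maps each unstable leaf $\omega\in\Gamma^u(\dlj)$ homeomorphically onto $\bdlj$, and by Lemma~\ref{lemma:jacobian}(1) all the reference measures $\tilde m_\omega$ within one $\dlj$ push forward to the same measure on the quotient. Hence a measure whose conditionals on unstable leaves are proportional to $\tilde m_\omega$ projects, on each $\bdlj$, to a multiple of $\bm$, not to $h_*\bm$ (unless $h_*$ were constant on that element, which it is not); to achieve $\bpi_*\tmu_*=h_*\bm$ the factor $h_*\circ\bpi$ must be built into the conditional densities on the unstable leaves. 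Relatedly, the phrase ``transverse component induced from $h_*\bm|_{\bDelta_0}$ via $\bpi$'' does not parse: the transverse direction to the family of unstable leaves is exactly the direction that $\bpi$ collapses, so $h_*\bm$ carries no information about it. Second, and more seriously, the identification of the weak* limit is not actually carried out. The properties you list for a subsequential limit of $\ra^{-n}\F^n_*\sigma$ --- membership in $\tB$, projection $d(\bpsi_\sigma)\,h_*\bm$, conditional invariance with eigenvalue $\ra$, canonical unstable conditionals --- do not determine a measure on $\Delta$: because the conditional family is holonomy-invariant, the projection constrains the transverse (stable-direction) measure only through its total mass on each $\dlj$, and the assertion that conditional invariance then forces a unique transverse structure is essentially the uniqueness/convergence statement you are trying to prove. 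What actually pins it down is the forward contraction along stable leaves (the coupling/equidistribution argument of \cite{dwy}); this same mechanism is also needed because for $\sigma\in\tB$ the leaf densities are merely $L^1$, with no regularity, so Lemma~\ref{lemma:jacobian}(3) alone does not supply the uniform density control your tightness-plus-identification scheme invokes, and one must separately justify passing $\F_*$ through weak* limits given the discontinuities of $\F$ across partition boundaries. You correctly flag these points as the main obstacle, but resolving them is where a proof of (b) would have to begin rather than end.
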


The measure $\tmu_*$ can be thought of as the {\it physical measure
for the leaky system} $(F,\Delta; \pi^{-1}H)$.

We formulate in Theorem 5 the results which, along with Theorem 4,
will give the analogs of
Theorems D and E for $(F, \Delta; \pi^{-1}H)$. Let $\M_F(\Delta^\infty)$
denote the set of invariant probability measures supported on $\Delta^\infty$,
and define
\[
\G_\Delta = \{ \eta \in \M_F(\Delta^\infty) \mid \eta(\log J^u_\mu F) < \infty \}\ .
\]
Furthermore, let $C^0_b(\Delta)$ be the set of bounded functions on
$\Delta$ which are continuous on each $\dlj$. We postpone the definitions
of Lip$^s(\Delta)$ and Lip$^u(\Delta)$ (other function spaces that will appear)
until after the theorem.

\begin{theorem}
\label{thm:variational}
Let $(F, \Delta; \pi^{-1}H)$ be as above.  Then the following hold.
\begin{enumerate}
	\item[(a)]  $ \displaystyle
\log \ra = \pa_{\G_{\Delta}} = \sup_{\eta \in \G_{\Delta}} \left\{ h_\eta(F) - \int_\Delta \log J^u_\mu F d\eta \right\} .$
	\item[(b)] Let $\tnu$ be defined by
\[
\tnu(\varphi) = \lim_{n\to \infty} \ra^{-n} \int_{\Delta^n} \varphi \, d\tmu_*
\qquad \mbox{for all $\varphi \in C^0_b(\Delta)$ } .
\]
Then $\tnu \in \G_\Delta$ and it attains the supremum in (a).
\item[(c)] Other properties of $\tnu$ are that  $(F, \tnu)$ 	is ergodic, and
exhibits exponential decay of correlations between
$\vf$ and $\psi \circ F^n$ for $\vf \in \mbox{Lip}^u(\Delta)$
and $\psi \in \mbox{Lip}^s(\Delta)$.
\end{enumerate}
\end{theorem}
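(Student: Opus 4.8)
\emph{Proof proposal.}
The entire theorem should be obtained by pushing the expanding-tower results of Theorem~\ref{thm:exp conv} up through the projection $\bpi:\Delta\to\bDelta$ and using Theorem~\ref{thm:hyp conv} to produce $\tnu$. Everything hinges on two reductions relating quantities on $\Delta$ to quantities on $\bDelta$. \textbf{Jacobians.} By the construction of the tower $J^u(F)=J\barF\circ\bpi$, while $\log J^u_\mu F$ and $\log J^u F$ differ by the coboundary of the bounded function $a$ (Lemma~\ref{lemma:jacobian}(4), extended to $\cup_{\ell>0}\Delta_\ell$ by $a(x)=a(F^{-1}x)$); hence for every $F$-invariant $\eta$ on $\Delta^\infty$, writing $\bareta:=\bpi_*\eta$ — which is $\barF$-invariant and supported on $\bDelta^\infty$ since $\bpi^{-1}\bDelta^\infty=\Delta^\infty$ by {\bf (H.1)} — one has $\int_\Delta\log J^u_\mu F\,d\eta=\int_{\bDelta}\log J\barF\,d\bareta$, and in particular $\eta\in\G_\Delta\iff\bareta\in\G_{\bDelta}$. \textbf{Entropy.} The plan is to prove $h_\eta(F)=h_{\bareta}(\barF)$. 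The inequality ``$\ge$'' is immediate: $\{\dlj\}=\bpi^{-1}\{\bdlj\}$ has atoms which are unions of stable sets, $\{\bdlj\}$ generates for $\barF$, so $h_\eta(F)\ge h_\eta(F,\{\dlj\})=h_{\bareta}(\barF,\{\bdlj\})=h_{\bareta}(\barF)$. The reverse inequality is the statement that collapsing stable manifolds loses no entropy; this is where the real work lies, and it follows in the standard way from $F\zeta\preceq\zeta$ (where $\zeta$ is the partition into stable sets) together with the exponential contraction $d(F^nx,F^ny)\le C\alpha^n$ for $y\in\omega^s(x)$, which makes $\zeta$ negligible for $h_\eta(F,\cdot)$. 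Granting this, $P^\Delta_\eta:=h_\eta(F)-\int_\Delta\log J^u_\mu F\,d\eta$ equals $P^{\bDelta}_{\bareta}:=h_{\bareta}(\barF)-\int_{\bDelta}\log J\barF\,d\bareta$ for all $\eta\in\G_\Delta$.

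\emph{Parts (a) and (b).} The upper bound is now immediate: for $\eta\in\G_\Delta$ the two reductions and Theorem~\ref{thm:exp conv}(b) (whose mild hypothesis (*) holds in the tower constructions under consideration) give $P^\Delta_\eta=P^{\bDelta}_{\bpi_*\eta}\le\pa_{\G_{\bDelta}}=\log\ra$, so $\pa_{\G_\Delta}\le\log\ra$. For the rest I would: (1) show the limit defining $\tnu$ exists on $C^0_b(\Delta)$ and yields a Borel probability measure — first for $\vf=\bvf\circ\bpi$ with $\bvf\in\B_0$, where $\bpi^{-1}\bDelta^n=\Delta^n$ and $\bpi_*\tmu_*=h_*\bm$ (Theorem~\ref{thm:hyp conv}(b)) reduce the integral to $\ra^{-n}\int_{\bDelta^n}\bvf h_*\,d\bm$, which by Theorem~\ref{thm:exp conv}(a) converges to $\bnu(\bvf)$; then for general $\vf\in C^0_b(\Delta)$ by disintegrating $\tmu_*$ over $\bpi$ and a routine density/contraction argument, using the uniform bound $\ra^{-n}\tmu_*(\Delta^n)=\ra^{-n}\int_{\bDelta^n}h_*\,d\bm\to d(h_*)=1$; in particular $\tnu(1)=1$ and $\bpi_*\tnu=\bnu$; (2) note $\tnu(\pi^{-1}H)=\lim_n\ra^{-n}\tmu_*(\Delta^n\cap\pi^{-1}H)=0$, since $1_{\pi^{-1}H}\in C^0_b(\Delta)$ by {\bf (H.1)} and $\Delta^n\cap\pi^{-1}H=\emptyset$; (3) derive $F$-invariance from the conditional invariance $\F_*\tmu_*=\ra\,\tmu_*$ via the identity $\int_{\Delta^n}\vf\circ F\,d\tmu_*=\ra\int_{\Delta^{n-1}}\vf\,d\tmu_*$ (which uses injectivity of $F$ and $F(\Delta^n)\cap\mbox{supp}\,\tmu_*=\Delta^{n-1}\cap\mbox{supp}\,\tmu_*$). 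Steps (2)--(3) put $\tnu\in\M_F(\Delta^\infty)$; then $\bpi_*\tnu=\bnu\in\G_{\bDelta}$ forces $\tnu\in\G_\Delta$, and the two reductions with Theorem~\ref{thm:exp conv}(c) give $P^\Delta_{\tnu}=P^{\bDelta}_{\bnu}=\log\ra$, proving simultaneously that $\pa_{\G_\Delta}\ge\log\ra$ and that $\tnu$ realizes the supremum.

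\emph{Part (c).} Ergodicity of $(F,\tnu)$ follows from that of $(\barF,\bnu)$ (Theorem~\ref{thm:exp conv}(d)) together with $\bpi_*\tnu=\bnu$: any $F$-invariant set is, $\tnu$-mod~$0$, saturated by stable sets (again by their contraction), hence equals $\bpi^{-1}(\bar A)$ for an $\barF$-invariant $\bar A$. For exponential decay of correlations between $\vf\in\mbox{Lip}^u(\Delta)$ and $\psi\circ F^n$ with $\psi\in\mbox{Lip}^s(\Delta)$, I would carry out the usual reduction to the quotient: for large $n$, $\psi\circ F^n$ is constant on each stable set up to an error $O(\lambda^n)$, hence agrees to that order with $\bpsi_n\circ\bpi$ for a suitable $\bpsi_n\in L^\infty(\bnu)$ of uniformly bounded norm; and, inside the integral against $\tnu$, $\vf$ may be replaced by its average $\bvf\in\B_0$ along stable leaves with no change to the two-point correlation after iterating back $\lfloor n/2\rfloor$ steps. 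Theorem~\ref{thm:exp conv}(d) applied to $(\barF,\bnu)$ then gives exponential decay of $\int_{\bDelta}\bvf\cdot\bpsi_n\,d\bnu-\bnu(\bvf)\bnu(\bpsi_n)$, and summing the geometric errors yields the claim.

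\emph{Main obstacle.} The crux is the entropy identity $h_\eta(F)=h_{\bpi_*\eta}(\barF)$ on the noncompact tower — the statement that collapsing the contracted stable directions destroys no entropy — together with, at a lower level of difficulty, making the two observable-approximation steps of part~(c) and the well-definedness of $\tnu$ in part~(b) quantitative. Everything else is bookkeeping built on Theorems~\ref{thm:exp conv} and~\ref{thm:hyp conv}.
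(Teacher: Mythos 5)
Your overall architecture coincides with the paper's: reduce the pressure on $(F,\Delta;\pi^{-1}H)$ to the quotient tower via the Jacobian coboundary $J\barF\circ\bpi = J^u_\mu F\cdot e^{a\circ F-a}$ with $a$ bounded, invoke Theorem~\ref{thm:exp conv} downstairs, build $\tnu$ from $\tmu_*$ using Theorem~\ref{thm:hyp conv}, check $\bpi_*\tnu=\bnu$, get invariance from conditional invariance, and prove part (c) by averaging $\vf$ along stable leaves and approximating $\psi\circ F^n$ by functions constant on stable sets (this is exactly the paper's three-term decomposition \eqref{eq:corr}). The genuine gap is at the step you yourself flag as the crux: the identity $h_\eta(F)=h_{\bpi_*\eta}(\barF)$. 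The mechanism you propose --- ``$F\zeta\preceq\zeta$ plus exponential contraction makes $\zeta$ negligible, in the standard way'' --- is not an argument in this setting. The map $F$ is not invertible (distinct branches can return to overlapping $u$-subsets of $\Delta_0$), so $\bigvee_{n\ge 0}F^{-n}\{\dlj\}$ generates only the $\sigma$-algebra of stable-saturated sets; the Kolmogorov--Sinai generator criterion therefore gives nothing beyond the easy inequality $h_\eta(F)\ge h_\eta(F,\{\dlj\})=h_{\bareta}(\barF)$ you already have, and the fiber-entropy results one would normally quote (Bowen, Ledrappier--Walters) are stated for continuous maps of compact spaces, whereas $\Delta$ is non-compact with a countable partition. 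The paper closes precisely this step by a different, self-contained device (Appendix B): pass to natural extensions, note that $\bpi^\sharp$ is injective because a backward orbit is pinned down by its itinerary of stable sets via $x_1=\bigcap_{n\ge 1}F^{n-1}(\omega^s(x_n))$ (uniform contraction), so the natural extensions of $(F,\eta)$ and $(\barF,\bareta)$ are isomorphic, and then invoke Rokhlin's theorem $h_\nu(T)=h_{\nu^\sharp}(T^\sharp)$. Without this (or a genuinely worked-out fiber argument), both the upper bound $\pa_{\G_\Delta}\le\log\ra$ and the claim that $\tnu$ attains the supremum are incomplete.

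A second, smaller omission: you assert that hypothesis (*) of Theorem~\ref{thm:exp conv} ``holds in the tower constructions under consideration.'' In the setting of Theorem~\ref{thm:variational} this must be verified, and the paper does so: when $\|Df\|$ blows up near $\Si$, one combines {\bf (H.2)} (which forces $d(\pi\Delta_j,\Si)\ge\delta\xi_1^{-j}$) with \eqref{eq:d1 blowup} to get $\log J^u_\mu F^n\le C n^2$ on $\Delta_0\cap\{R=n\}$, whence (*). This is not decorative: (*) is what makes $\bnu(\log J\barF)<\infty$, hence $\bnu\in\G_{\bDelta}$ and, via your own reduction, $\tnu\in\G_\Delta$; skipping it leaves Theorem~\ref{thm:exp conv}(b),(c) unavailable. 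The remaining items of your outline (existence of the limit defining $\tnu$ on $C^0_b(\Delta)$, invariance, $\bpi_*\tnu=\bnu$, ergodicity, and the correlation estimate with $k\approx\ell\approx n/2$) track the paper's proof and are fine as sketched.
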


The function spaces $\mbox{Lip}^s(\Delta)$ and $\mbox{Lip}^u(\Delta)$
are defined as follows.
For $\omega^s \in \Gamma^s(\Delta)$ and $x,y \in \omega^s \subset \Delta_0$,
we denote by $d_s(x, y)$ the distance between $\pi(x)$ and $\pi(y)$ according to
the Riemannian metric on $M$, and extend $d_s$ to $\omega^s
\in \cup_{\ell >0} \Gamma^s(\Delta_\ell)$
by setting $d_s(F^\ell x, F^\ell y) = \alpha^\ell d_s(x,y)$ for all $\ell < R(x)$
and $y \in \omega^s(x)$. It then follows from Sect.~\ref{tower review}.I that
$d_s(F^nx, F^ny) \le C \alpha^n$ for all $n \geq 0$ and
$x,y \in \Delta$, $y \in \omega^s(x)$. For $\vf \in C^0_b$,
let $|\vf|^s_{\lip}$ be the supremum of Lipshitz constants of
$\vf|_{\omega^s}$ with respect to $d_s$, as $\omega^s$
ranges over all stable sets in $\Gamma^s(\Delta)$.
Then
Lip$^s(\Delta) = \{ \vf \in C^0_b : |\vf|^s_{\lip}  < \infty \}$. The function space
Lip$^u(\Delta)$ is defined similarly using $|\vf|^u_{\lip}$ where $|\vf|^u_{\lip}$
is the
Lipschitz constant of $\vf$ restricted to unstable
sets in the metric $d_\beta$.

%%%%%%%%%%%%%%%%%%%%%%%%%%%%%%%%%%%%
\subsection{Outline of Proof of Theorem~\ref{thm:exp conv}(b),(c):
 \cite{bdm} amended}

\label{bdm proof}

We assume part (a) of Theorem 3 has been proved,
and proceed to the proofs of parts (b) and (c), following
mostly \cite{bdm} and highlighting several finiteness issues.

\bigskip
\noindent {\it 1. Return map to $\bDelta^\infty_0$ and the full shift \
$T: \Sigma_\infty \circlearrowleft$}

\smallskip
Since $\barF$ is not defined everywhere on $\bDelta$, let us first make
precise the definition of the survivor set $\bDelta^\infty$. Recall from Sect. 5.1 that
modulo a set of $\bm$-measure 0, $\bDelta_0$ is the disjoint union of
a countable number of closed subsets $(\bDelta_0)_j$ with the property that

(i) {\it in the absence of $\bH$}, $\barF^R$ maps each $(\bDelta_0)_j$
homeomorphically onto $\bDelta_0$, and

(ii) {\it with $\bH$ present}, each $(\bDelta_0)_j$ either falls entirely into
$\bH$ on its way up the tower or

\quad returns to $\bDelta_0$ intact.

\noindent We rename the subcollection $\{(\bDelta_0)_j\}$ that return to
$\bDelta_0$ in (ii) as $\{A_i\}$, and define
$$
\bDelta_0^\infty:= \bDelta^\infty \cap \bDelta_0 = \cap_{n \ge 0}
(\barF^R)^{-n} (\cup_i A_i)\ .
$$
It is easy to see that there is a bijection $\pi_0 :\bDelta_0^\infty \to
\Sigma_\infty = \Pi_{i=1}^\infty \{1,2,3, \cdots\}$ such that $\pi_0 \circ \barF^R
= T \circ \pi_0$ where $T: \Sigma_\infty \circlearrowleft$ the full shift.
Moreover, with $\bDelta_0^\infty$ given its relative
topology as a subset of $\bDelta_0$, and $ \Sigma_\infty$ given the topology
defined by cylinder sets, $\pi_0$ is a conjugating homeomorphism.

Let $\Z_n$ denote the set of cylinders in $\Sigma_\infty$
defined by coordinates $1, \cdots, n$, and write $\Z=\Z_1$.
We introduce a metric $\hat d$ on $\Sigma_\infty$ compatible with
its topology defined by $\{\Z_n\}$:
For $x,y \in \Sigma_\infty$, define
$\hat s(x,y) = \min \{ i \in \N \mid T^ix, T^iy$  lie in different $Z \in \Z \}$,
and let $\hat d(x,y) = \beta^{\hat s(x,y)}$ (where $\beta$ is as in Sect. 5.1).
We say a function $\phi: \Sigma_\infty \to \R$ is locally H\"older continuous if
\[
\sup_{Z \in \Z} \{ |\phi(x) - \phi(y) | \cdot \beta^{-\hat s(x,y)} : x, y \in Z \} < \infty  .
\]

\medskip
\noindent {\it 2. Sarig's abstract results on the pressure of \
$T: \Sigma_\infty \circlearrowleft$ }

\smallskip
We recall here a few relevant results for $T: \Sigma_\infty \circlearrowleft$.
These results were proved in \cite{sarig} in more general
settings of topologically mixing countable Markov shifts.
Given $\phi: \Sigma_\infty \to \R$, let
$S_n\phi = \sum_{i=0}^{n-1} \phi \circ T^i$. The
{\it Gurevic pressure} of $\phi$ is defined to be
\[
P_G(\phi, Z) = \lim_{n \to \infty} \frac 1n \log \left( \sum_{T^nx = x; \, x \in Z } e^{S_n\phi(x)} \right)
\]
where $Z$ is any fixed element of $\Z$. For $\phi$ locally H\"older continuous,
it is shown in \cite{sarig}, Theorem 1, that the limit above exists and is independent
of $Z$. This number is $\le \infty$ in general, and is equal to $\infty$ for many
$\phi$ given that $T$ is an infinite shift.

We will also need the following definitions:
The transfer operator associated with $\phi$ is given by
\[
\Lp_\phi \psi(x) = \sum_{Ty=x} e^{\phi(y)} \psi(y), \qquad
\mbox{for \ bounded \ $\psi $}.
\]
Let $\M_T(\Sigma_\infty)$ be the set of $T$-invariant Borel probability measures
on $\Sigma_\infty$. Given a potential $\phi: \Sigma_\infty \to \R$,
we say $\eta \in \M_T(\Sigma_\infty)$
is a {\it Gibbs measure} for $\phi$ if there exist constants $C>1$ and $P_\eta \in \R$
such that for any $n \ge 1$, $Z_n \in \Z_n$ and $x \in Z_n$,
\begin{equation}
\label{eq:gibbs}
C^{-1} e^{S_n\phi(x) - nP_\eta} \le \eta(Z_n) \leq Ce^{S_n\phi(x) - nP_\eta}.
\end{equation}

The following version of results from \cite{sarig}
are adequate for our purposes:

\begin{theorem} Let $T: \Sigma_\infty \circlearrowleft$ be as above, and let
$\phi: \Sigma_\infty \to \mathbb R$ be locally H\"older continuous.
Assume  $|\Lp_\phi 1|_\infty < \infty$. Then:
\begin{itemize} \vspace{-6 pt}
\item[(1)] \cite[Theorem 1]{sarig} $P_G(\phi) < \infty$;
\vspace{-6 pt}
\item[(2)] \cite[Theorem 3]{sarig} \vspace{-9 pt}
$$P_G(\phi) = \sup \{ h_\eta(T) + \int \phi d\eta \mid \eta \in \M_T(\Sigma_\infty)
\mbox{ and } \eta(-\phi) < \infty \}\ .
$$
\item[(3)] \cite[Theorem 8]{sarig} Suppose $\eta$ is a Gibbs measure for $\phi$, and
$\eta(-\phi)< \infty$. Then \vspace{-9 pt}
$$P_G(\phi)= P_\eta = h_\eta (T) + \int \phi d\eta .
$$
\end{itemize}
\end{theorem}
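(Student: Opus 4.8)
The plan is that none of these three statements requires a new argument: $(\Sigma_\infty, T)$ is the full one-sided shift on a countable alphabet, the most regular of all countable Markov shifts --- it is topologically mixing and trivially enjoys Sarig's \emph{big images and preimages} (BIP) property --- while ``locally H\"older continuous'' is exactly (a mild strengthening of) his class of potentials with summable variations. So I would invoke the relevant results of \cite{sarig} and verify only that their hypotheses carry over, the sole quantitative input being $|\Lp_\phi 1|_\infty < \infty$.

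For (1), I would first record the Bowen-type distortion estimate available for locally H\"older $\phi$: if $x,y$ lie in a common $n$-cylinder $Z_n\in\Z_n$, then $|S_n\phi(x)-S_n\phi(y)| \le \sum_{k=0}^{n-1}H\beta^{\,\hat s(T^kx,T^ky)} \le H\beta/(1-\beta) =: D_0$, where $H$ is the local H\"older seminorm of $\phi$; in particular the oscillation of $\phi$ on any $Z\in\Z$ is $\le H\beta$. Since $\Lp_\phi$ is a positive operator, an immediate induction gives $|\Lp_\phi^n 1|_\infty \le |\Lp_\phi 1|_\infty^n$. Finally, for a fixed $Z\in\Z$ and any $z\in\Sigma_\infty$, each periodic point $x\in Z$ with $T^nx=x$ can be matched, using the full-shift structure, with the unique preimage branch $y=(x_0,\dots,x_{n-1},z_0,z_1,\dots)\in T^{-n}\{z\}$ lying in the same $n$-cylinder as $x$; distinct periodic points yield distinct such $y$, so the distortion bound gives $\sum_{T^nx=x,\,x\in Z}e^{S_n\phi(x)} \le e^{D_0}\Lp_\phi^n 1(z) \le e^{D_0}|\Lp_\phi 1|_\infty^n$. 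Taking $\tfrac1n\log$ and letting $n\to\infty$ yields $P_G(\phi)\le \log|\Lp_\phi 1|_\infty<\infty$, which, together with \cite[Theorem 1]{sarig} (existence and $Z$-independence of the defining limit for summable-variation $\phi$), is precisely (1). Equivalently, one may quote Sarig's criterion $P_G(\phi)<\infty \Leftrightarrow \sum_a\exp(\sup_{[a]}\phi)<\infty$ and note that, by the oscillation bound, $\sum_a\exp(\sup_{[a]}\phi)$ is comparable to $|\Lp_\phi 1|_\infty$.

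For (2) and (3) I would quote Sarig directly. Given that $\phi$ is locally H\"older on the full shift with $P_G(\phi)<\infty$, the variational principle $P_G(\phi)=\sup\{h_\eta(T)+\int\phi\,d\eta : \eta\in\M_T(\Sigma_\infty),\ \eta(-\phi)<\infty\}$ is \cite[Theorem 3]{sarig}; the restriction $\eta(-\phi)<\infty$ is only there to make $h_\eta(T)+\int\phi\,d\eta$ unambiguous, and Sarig shows the unrestricted supremum is no larger. That gives (2). For (3) I would first check that the normalizing constant $P_\eta$ in the Gibbs inequality \eqref{eq:gibbs} is forced to equal $P_G(\phi)$: summing \eqref{eq:gibbs} over $Z_n\in\Z_n$ and using $\sum_{Z_n}\eta(Z_n)=1$ gives $C^{-1}e^{-nP_\eta}Q_n \le 1 \le Ce^{-nP_\eta}Q_n$ with $Q_n:=\sum_{Z_n\in\Z_n}\exp(\sup_{Z_n}S_n\phi)$, and by the distortion estimate above together with the BIP/full-shift structure $Q_n$ is comparable, uniformly in $n$, to the periodic-orbit partition sum defining $P_G$; hence $\tfrac1n\log Q_n\to P_G(\phi)$ and $P_\eta=P_G(\phi)$. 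The remaining identity $P_\eta=h_\eta(T)+\int\phi\,d\eta$ --- i.e.\ that a Gibbs measure with $\eta(-\phi)<\infty$ is \emph{the} equilibrium state --- is \cite[Theorem 8]{sarig}: the mere existence of a Gibbs measure forces positive recurrence of $\phi$ on this BIP shift, and Sarig's Ruelle--Perron--Frobenius and equilibrium-state theory then identifies $\eta$ as the (unique) equilibrium state.

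I do not expect a genuine obstacle here; the work is purely bookkeeping. The points needing care are: (i) confirming that the statements quoted --- especially the variational principle with its $\eta(-\phi)<\infty$ caveat --- match the precise formulations in \cite{sarig}, since these vary slightly across Sarig's papers; (ii) tracking the constant in the Gibbs inequality so that part (3) is consistent with parts (1)--(2); and (iii) the one-sided versus two-sided shift distinction, which is immaterial for entropy, pressure and the variational principle but should be acknowledged. The substantive inputs --- the construction of conformal and Gibbs measures and the RPF theorem for countable Markov shifts --- are used entirely as black boxes from \cite{sarig}.
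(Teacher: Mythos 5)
Your proposal matches the paper's treatment: the paper gives no proof of this theorem at all --- it simply quotes Sarig's Theorems 1, 3 and 8 as ``adequate for our purposes'' --- and your plan of invoking Sarig as a black box after checking that the full shift is topologically mixing (indeed BIP) and that $|\Lp_\phi 1|_\infty<\infty$ forces $\sup\phi<\infty$ and $P_G(\phi)\le\log|\Lp_\phi 1|_\infty<\infty$ is exactly in that spirit. Your supplementary verifications (the distortion bound, the periodic-point/preimage matching on the full shift, and the identification $P_\eta=P_G(\phi)$ by summing the Gibbs inequality over $n$-cylinders) are sound and merely flesh out details the paper delegates to the citation.
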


It follows from (1) and (2) above that for
$\eta \in \M_T(\Sigma_\infty)$, $h_\eta(T)<\infty$ provided
$|\Lp_\phi 1|_\infty < \infty$ and
$\eta(-\phi)<\infty$.

\medskip
\noindent {\it Notation:}
In what follows, we will identify $\barF^R: \bDelta^\infty_0 \circlearrowleft$ with
$T: \Sigma_\infty \circlearrowleft$ and use the two sets of notation
interchangeably. We also introduce the following notation: given
$\bareta \in \M_{\barF}(\bDelta^\infty)$, let $\bareta_0$ denote the measure
$\left(\frac{1}{\bareta(\bDelta_0^\infty)} \bareta \right) \!
|_{\bDelta_0^\infty}$. It is easy to see that $\bareta_0 \in
\M_{\barF^R}(\bDelta^\infty_0)$.

\bigskip
\noindent {\it 3. Relating pressure on $(\barF, \bDelta^\infty)$ to that on
$(\barF^R, \bDelta^\infty_0)$}

\smallskip
Let $\phi = - \log (\ra^R J\barF^R)$. The aim of this step is to prove that for every $\bareta \in \M_{\barF}(\bDelta^\infty)$ with
$\bareta(\log J\barF)< \infty$,
\begin{equation} \label{ineq}
\bareta(\bDelta_0^\infty)^{-1} \left\{h_{\bareta}(\barF) - \bareta(\log J\barF)
 - \log \ra \right\} = h_{\bareta_0}(\barF^R) + \bareta_0(\phi)
\le P_G(\phi) < \infty\ .
\end{equation}

The last two inequalities follow from
Theorem 6(a),(b) once we check
(i) $\phi$ is locally H\"older continuous
with respect to the metric $\hat d$, (ii) $|\Lp_\phi 1|_\infty < \infty$, and
(iii) $\bareta_0 (-\phi)< \infty$.

For (i), notice that by Lemma~\ref{lemma:jacobian}, $\phi$ is locally
H\"older continuous with respect to the separation time
metric $d_\beta$, and $\hat s(x,y) \le s(x,y)$.

For (ii), let
$Z(y)$ denote the element of $\Z$ containing $y \in \bDelta_0^\infty$.
We fix $x \in \bDelta_0^\infty$ and use the
bounded distortion of $J\barF^R$ given by Lemma~\ref{lemma:jacobian}(3)
to write
\[
\begin{split}
\Lp_\phi 1(x) & = \sum_{Ty=x} \ra^{-R(y)} (J\barF^R(y))^{-1}
\leq C \sum_{Ty=x} \ra^{-R(y)} \bm(Z(y)) \\
& \le C \sum_{n \geq 1} \ra^{-n} \bm\{R=n\} \le C' \sum_{n \ge 1} \ra^{-n} \theta_0^n < \infty\ .
\end{split}
\]
Here we have used $\ra> \theta_0$ and the fact that $\barF^R$ maps
each $Z \in \Z$ bijectively onto $\bDelta^\infty_0$.

For (iii), we will show $\bareta(\log J\barF)< \infty$ implies
$\bareta_0 (-\phi)< \infty$: Since  $J\barF \equiv 1$ on
$\bDelta \setminus \barF^{-1}\bDelta_0$,
\[
\int_{\bDelta_0^\infty} \log J\barF^R \, d\bareta_0 = \bareta(\bDelta_0^\infty)^{-1} \int_{\barF^{-1}\bDelta_0^\infty} \log J\barF \,d\bareta
= \bareta(\bDelta_0^\infty)^{-1} \int_{\bDelta^\infty} \log J\barF \,d\bareta\ .
\]
Thus if $\bareta(\log J\barF)< \infty$, then, noting
$\bareta(\bDelta_0^\infty) \int R d\bareta_0 = 1$, we have
\begin{equation}
\label{eq:finite}
\begin{split}
\bareta_0(-\phi) & =
\int_{\bDelta_0^\infty} \log (\ra^R J\barF^R) \, d\bareta_0 = \int_{\bDelta_0^\infty}
R \log \ra \, d\bareta_0
+ \int_{\bDelta_0^\infty} \log J\barF^R \, d\bareta_0 \\
& = \big(\log \ra + \bareta(\log J\barF)\big) \cdot
\bareta(\bDelta_0^\infty)^{-1} < \infty\ .
\end{split}
\end{equation}
This completes the verification of (i)--(iii).

The equality in (\ref{ineq}) follows from (\ref{eq:finite}) together with
the general formula of Abramov \cite{abramov}, which says that
$h_{\bareta}(\barF) = h_{\bareta_0}(\barF^R) \bareta(\bDelta^\infty_0)$.
In all the references we know of (e.g. \cite[\S 6.1]{petersen}), this equality
is proved assuming the invertibility of the transformation.
In the situation above, $\barF$ is clearly not invertible, but the same result
is easily deduced by passing to natural extensions; see Appendix B.

\bigskip
\noindent {\it 4. Existence of a pressure-maximizing invariant measure $\bnu$ }

\smallskip
Let $\bnu$ be the linear functional on
$C_b^0(\bDelta)$ defined by
\[
\bnu(\psi) = \lim_{n\to \infty} \ra^{-n}
            \int_{\bDelta} \bLp^n(h_* \psi)  \, d\bm
        = \lim_{n\to \infty} \ra^{-n} \int_{\bDelta^n} \psi h_* \, d\bm\ .
\]
We refer the reader to [BDM] for verification that $\bnu$ is a well defined,
$\barF$-invariant probability measure on $\bDelta^\infty$.

The aim of this step is to show that plugging $\bareta = \bnu$
into (\ref{ineq}), we get
\begin{equation} \label{eq}
h_{\bnu_0}(\barF^R) + \bnu_0(\phi) = P_G(\phi) = 0 \qquad {\rm and} \qquad
h_{\bnu}(\barF) - \bnu(\log J\barF) = \log \ra\ .
\end{equation}

Observe from the definition of $\phi$ in Step 3 that $e^{S_n\phi(x)} = \ra^{-S_nR(x)} (J(\barF^R)^n(x))^{-1}$.
The following lemma shows that $\bnu_0$
is a Gibbs measure for the potential $\phi$,
with $P_{\bnu_0}=0$.

\begin{lemma} \cite[Lemma 5.3]{bdm}
\label{lem:nu bounds}
There exists a constant $C>1$ such that for any $n\ge 1$, any $n$-cylinder
$Z_{n} \in \Z_{n}$, and any $y_* \in Z_{n}$,
\[
C^{-1} \ra^{-S_nR(y_*)} (J(\barF^R)^n(y_*))^{-1} \leq \bnu_0(Z_{n})
\leq C \ra^{-S_nR_n(y_*)} (J(\barF^R)^n(y_*))^{-1}\ .
\]
\end{lemma}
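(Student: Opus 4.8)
The plan is to establish the two-sided (Gibbs) estimate by transporting the cylinder $Z_n$ forward onto the base $\bDelta_0$ by $(\barF^R)^n$, controlling the return Jacobian by the bounded distortion of Lemma~\ref{lemma:jacobian}(3) and the surviving $\bm$-mass by Theorem~\ref{thm:exp conv}(a); this is in essence the argument of \cite[Lemma 5.3]{bdm}, so I only indicate the structure. First I would reduce from $\bnu_0$ to $\bnu$: since $\bnu_0=\bnu(\bDelta_0^\infty)^{-1}\,\bnu|_{\bDelta_0^\infty}$ with $0<\bnu(\bDelta_0^\infty)\le 1$ (positivity follows from $\barF$-invariance of $\bnu$ together with finiteness of $R$ on $\bDelta_0$), it suffices to bound $\bnu(Z_n)$ up to a fixed multiplicative constant. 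Fix an $n$-cylinder $Z_n$ with itinerary $(i_1,\dots,i_n)$, let $\hat Z_n\subset\bDelta_0$ be the corresponding ``full'' cylinder so that $Z_n=\hat Z_n\cap\bDelta^\infty$, fix $y_*\in Z_n$, and let $q:=S_nR(y_*)=R_{i_1}+\dots+R_{i_n}$, which is \emph{constant} on $\hat Z_n$ because each base piece has constant return time. Since $\hat Z_n$ is relatively clopen for the symbolic metric $d_\beta$ with inner radius $\beta^q$, one has $1_{\hat Z_n}\in\B_0$; applying Theorem~\ref{thm:exp conv}(c) to $\varphi=1_{\hat Z_n}$ and using that $\bnu$ is carried by $\bDelta^\infty$, so that $\bnu(\hat Z_n)=\bnu(Z_n)$, one obtains
\[
\bnu(Z_n)=\lim_{k\to\infty}\ra^{-k}\int_{\bDelta^k\cap\hat Z_n}h_*\,d\bm .
\]

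The main step is the forward change of variables. Because the itinerary of $\hat Z_n$ avoids $\bH$, a point $x\in\hat Z_n$ lies in $\bDelta^k$ if and only if $(\barF^R)^nx\in\bDelta^{k-q}$; hence $T:=(\barF^R)^n|_{\hat Z_n}$ maps $\hat Z_n\cap\bDelta^k$ bijectively onto $\bDelta_0\cap\bDelta^{k-q}$, with Jacobian $J(\barF^R)^n$ with respect to $\bm$. Changing variables,
\[
\int_{\bDelta^k\cap\hat Z_n}h_*\,d\bm=\int_{\bDelta_0\cap\bDelta^{k-q}}\frac{h_*\circ T^{-1}}{J(\barF^R)^n\circ T^{-1}}\,d\bm .
\]
Two uniform estimates now enter: Lemma~\ref{lemma:jacobian}(3), iterated over the $n$ returns (the separation times along the orbit grow so that $\prod_{m\ge0}(1+C_1\alpha^{m/2})<\infty$), yields a constant $L_0<\infty$ with $J(\barF^R)^n(x)/J(\barF^R)^n(x')\le L_0$ for all $x,x'\in\hat Z_n$; and $h_*$ is bounded above and below by positive constants on $\bDelta_0$ (part of the spectral-gap package, see \cite{bdm}). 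Since $T^{-1}(\bDelta_0)=\hat Z_n\subset\bDelta_0$, the integrand above is comparable, uniformly in $n$, $Z_n$ and $k$, to $J(\barF^R)^n(y_*)^{-1}$, whence
\[
\ra^{-k}\int_{\bDelta^k\cap\hat Z_n}h_*\,d\bm\ \asymp\ J(\barF^R)^n(y_*)^{-1}\,\ra^{-q}\,\ra^{-(k-q)}\,\bm\bigl(\bDelta_0\cap\bDelta^{k-q}\bigr).
\]
By Theorem~\ref{thm:exp conv}(a) the limit $D_0:=\lim_{m\to\infty}\ra^{-m}\bm(\bDelta_0\cap\bDelta^m)=d(1_{\bDelta_0})$ exists and is finite, and it is positive since $0<\bnu(\bDelta_0^\infty)\le\|h_*\|_\infty D_0$. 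Letting $k\to\infty$ therefore gives $\bnu(Z_n)\asymp D_0\,\ra^{-q}\,J(\barF^R)^n(y_*)^{-1}$; dividing by the fixed constant $\bnu(\bDelta_0^\infty)$ and recalling $\ra^{-q}J(\barF^R)^n(y_*)^{-1}=\ra^{-S_nR(y_*)}(J(\barF^R)^n(y_*))^{-1}=e^{S_n\phi(y_*)}$ yields precisely the asserted inequality with a uniform $C$; in particular $P_{\bnu_0}=0$, since the factor $\ra^{-q}$ cancels identically between the two sides.

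The step I expect to be the real obstacle is the \emph{uniformity} of $C$: one must verify that $L_0$, the two-sided bounds on $h_*|_{\bDelta_0}$ and $D_0$ do not degenerate as $n\to\infty$ or as $Z_n$ ranges over all $n$-cylinders — this is exactly where the exponential contraction of Lemma~\ref{lemma:jacobian}(3) and the spectral gap are used — and, relatedly, that $q=S_nR$ is genuinely constant on $\hat Z_n$, so that $e^{S_n\phi}$ has bounded oscillation on $Z_n$, as a Gibbs potential must. The remaining points — clopenness of $\hat Z_n$ for $d_\beta$, positivity of $\bnu(\bDelta_0^\infty)$ and of $D_0$, and the bookkeeping of the index shift $k\mapsto k-q$ when passing to the limit — are routine.
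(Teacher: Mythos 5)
Your argument is correct and is essentially the proof this paper delegates to \cite[Lemma 5.3]{bdm}: you obtain the Gibbs bound from the defining limit for $\bnu$ applied to $1_{\hat Z_n}\in\B_0$, the change of variables by the full surviving branch $(\barF^R)^n|_{\hat Z_n}$ (using that $S_nR$ is constant on the cylinder, that its itinerary avoids $\bH$, and that $J\barF^{S_nR}=J(\barF^R)^n$ there), the uniform distortion constant from Lemma 5.1(3), and the existence and finiteness of $\lim_m \ra^{-m}\bm(\bDelta_0\cap\bDelta^m)$ from Theorem 3(a), with its positivity and that of $\bnu(\bDelta_0^\infty)$ argued as you indicate. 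The one input you import---that $h_*\geq 0$ is bounded above and below by positive constants on $\bDelta_0$---is legitimately part of the \cite{bdm} spectral-gap package and, in the present abstract setting, follows from the full surviving branches of $\barF^R$ onto $\bDelta_0$ combined with the distortion bound and the Lipschitz regularity of $h_*|_{\bDelta_0}$ in the metric $d_\beta$, so the constants you need are indeed uniform in $n$ and $Z_n$.
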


It remains only to check that $\bnu(\log J \barF) < \infty$, for
this bound implies $\bnu_0(-\phi)< \infty$ (see Step 3 above), and
once we have that, Theorem 6(c) gives the first equation in (\ref{eq}).
The second equation follows from (\ref{ineq}) and the first.

In what follows, $C$ will be used as a generic constant the value of which
is permitted to vary from line to line. To prove $\bnu(\log J \barF) < \infty$, we first estimate
\begin{equation}
\label{eq:nu decay}
\begin{split}
\bnu_0 \{R = n\} \; & =  \sum_{ Z \in \Z : R(Z) = n} \bnu_0(Z)
       \; \leq \; \sum_{Z \in \Z : R(Z) = n}
           C \ra^{-n} (J\barF^R(y_*))^{-1}  \\
    & \leq \; C \sum_{ Z \in \Z : R(Z) = n } \ra^{-n} \bm(Z)
    \; \leq \; C\theta_0^n \ra^{-n}\ ,
\end{split}
\end{equation}
where $y_*$ is an arbitrary point in $Z$. The first inequality comes from
Lemma 5.2, the second from Lemma 5.1(3), and the third from the
tail bound for $(\bDelta, \barF)$. Using the invariance of $\bnu$
and the fact that $J\barF \equiv 1$ on $\bDelta \setminus \barF^{-1}(\bDelta)$,
we obtain $\bnu(\log (J\barF))$
$$
= \sum_{n \ge 1} \sum_{R(Z) = n}  \int_Z  \log (J\barF^n) d \bnu
= \sum_{n \ge 1} \bnu\{R=n\} |\log J\barF^n|_\infty
\le C \sum_{n \ge 1} (\theta_0 \ra^{-1})^n \bar \theta^{-n} < \infty\ .
$$
The inequalities above  come from condition (*) in Theorem 3; this is
the only place in the entire proof that uses this condition. We have also used
the fact that $\bnu\{R=n\}$ is bounded by $\bnu(\bDelta^\infty_0)$ times
the last quantity in (\ref{eq:nu decay}).

\bigskip
Parts (b) and (c) of Theorem 3  follow immediately from Steps 3 and 4.
\hfill $\square$

%%%%%%%%%%%%%%%%%%%%%%%%%%%%%%%
\subsection{Proof of Theorem~\ref{thm:variational}}

\label{tower proofs}

We will prove this theorem by leveraging the corresponding results for
expanding towers.

\bigskip
\noindent {\it Variational principle} (Theorem 5(a),(b)): First, we show
\begin{equation} \label{comp}
\sup_{\eta \in \G_\Delta} \left\{h_\eta(F)-\int \log J^u_\mu(F) d\eta \right\}
\le \sup_{\bareta \in \G_{\bDelta}} \left\{h_{\bareta}(\barF)-
\int \log J(\barF) d\bareta \right\}\ ,
\end{equation}
which follows immediately from the following lemma:

\begin{lemma}
\label{lem:var tnu}
Let $\eta \in \G_\Delta$ and define $\bareta = \bpi_* \eta$.
Then $\bareta \in \G_{\bDelta}$ and
\begin{enumerate}  \vspace{-6 pt}
  \item[(i)]  $\int_\Delta \log J^u_\mu F \, d\eta = \int_{\bDelta} \log J\barF \, d\bareta$;
                   \vspace{-6 pt}
  \item[(ii)]  $h_\eta(F) = h_{\bareta}(\barF)$.
\end{enumerate}
\end{lemma}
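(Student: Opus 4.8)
The plan is to treat (i) and (ii) separately, in each case exploiting that $\bpi$ is a factor map which collapses precisely the stable sets of $(F,\Delta)$ to points. For (i), the point is that $\log J^u_\mu F$ and $(\log J\barF)\circ\bpi$ differ by a coboundary. On unstable leaves one has two families of reference measures, one obtained from the other by the density $e^a$ (Sect.~\ref{tower review}.I), where $a$ is the distortion function (extended off $\Delta_0$ by $a = a\circ F^{-1}$); the Jacobian $J^u F$ with respect to the $e^a$-weighted family and the Jacobian $J^u_\mu F$ with respect to the Riemannian family both equal $1$ on $\Delta\setminus F^{-1}\Delta_0$. A one-line change of variables along unstable leaves gives, $\eta$-a.e.,
\[
\log J^u_\mu F \;=\; \log J^u F \,+\, a \,-\, a\circ F .
\]
Moreover $J^u(f^R)$ is constant along stable leaves (Lemma~\ref{lemma:jacobian}(2)) and $\bm$ was built so that $J\barF\circ\bpi = J^u F$, so $\log J^u_\mu F = (\log J\barF)\circ\bpi + (a - a\circ F)$. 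Now $a$ is bounded on $\Delta$ --- here the $C^{1+\epsilon}$ regularity enters, via the geometric decay of the telescoping terms defining $a$ together with the uniform contraction along stable leaves (equivalently Lemma~\ref{lemma:jacobian}(4)) --- so $a\in L^1(\eta)$ and, by $F$-invariance of $\eta$, $\int(a - a\circ F)\,d\eta = 0$. Integrating the displayed identity against $\eta$ proves (i), and in particular $\bareta(\log J\barF) = \eta(\log J^u_\mu F)<\infty$. Since $\bpi$ intertwines $F$ and $\barF$ and $\pi^{-1}H$ is stable-saturated (so $\bpi^{-1}\bH = \pi^{-1}H$, whence $\bpi(\Delta^\infty)\subset\bDelta^\infty$), the measure $\bareta = \bpi_*\eta$ is an $\barF$-invariant probability on $\bDelta^\infty$, and therefore $\bareta\in\G_{\bDelta}$.

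For (ii) I would argue through generating partitions. The countable Markov partition $\mathcal{P} = \{\dlj\}$ is a two-sided generator for the invertible system $(F,\eta)$ (forward refinements separate points on distinct stable sets --- this is the content of the separation time being finite --- and backward refinements separate points on distinct unstable sets), while $\overline{\mathcal{P}} = \{\bdlj\}$ is a generator for $(\barF,\bareta)$, as noted in Sect.~\ref{tower review}. Since each $\dlj$ is stable-saturated, $\dlj = \bpi^{-1}(\bdlj)$, so $\bigvee_{i=0}^{n-1}F^{-i}\mathcal{P} = \bpi^{-1}\!\big(\bigvee_{i=0}^{n-1}\barF^{-i}\overline{\mathcal{P}}\big)$ for every $n$, giving $H_\eta\big(\bigvee_{0}^{n-1}F^{-i}\mathcal{P}\big) = H_{\bareta}\big(\bigvee_{0}^{n-1}\barF^{-i}\overline{\mathcal{P}}\big)$ and hence, in the limit, $h_\eta(F,\mathcal{P}) = h_{\bareta}(\barF,\overline{\mathcal{P}})$. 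It then remains to upgrade to full entropies: $h_{\bareta}(\barF)<\infty$ follows from Theorem~\ref{thm:exp conv}(b) via the identification with the countable shift (Sect.~\ref{bdm proof}) using $\bareta(\log J\barF)<\infty$, and then a Kolmogorov--Sinai argument --- approximating an arbitrary finite-entropy partition by $\bigvee_{-N}^{N}F^{i}\mathcal{P}$, which has the same $(F,\cdot)$-entropy as $\mathcal{P}$ --- gives $h_\eta(F) = h_\eta(F,\mathcal{P})$ even though $\mathcal{P}$ may have infinite static entropy; similarly $h_{\bareta}(\barF) = h_{\bareta}(\barF,\overline{\mathcal{P}})$. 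Chaining the three equalities yields $h_\eta(F) = h_{\bareta}(\barF)$.

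The main obstacle is the entropy equality (ii): the Markov partitions here typically have infinite Shannon entropy, so the generator theorem cannot be quoted off the shelf, and the argument must route through the finiteness of $h_{\bareta}(\barF)$ --- which is where $\bareta(\log J\barF)<\infty$ is needed --- and an approximation valid in $[0,\infty]$, the same finiteness bookkeeping already flagged in Sect.~\ref{bdm proof}. In (i) the only delicate point is the $\eta$-integrability (in fact boundedness) of the distortion function $a$.
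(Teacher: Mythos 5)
Part (i) of your proposal is essentially the paper's own argument: the cohomology relation $J\barF\circ\bpi = J^u_\mu F\cdot e^{a\circ F-a}$ with $a$ bounded (Lemma~\ref{lemma:jacobian}(4)), integrated against the $F$-invariant $\eta$, gives (i) and the finiteness $\bareta(\log J\barF)<\infty$, hence $\bareta\in\G_{\bDelta}$; that part is fine.

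Part (ii) contains a genuine gap. You treat $(F,\eta)$ as an invertible system for which $\{\dlj\}$ is a two-sided generator, and your later repair (approximating an arbitrary finite-entropy partition by $\bigvee_{-N}^{N}F^{i}\{\dlj\}$) again takes forward images of partition elements. But the hyperbolic tower map $F$ is not invertible: two tower points $(x,R(x)-1)$ and $(y,R(y)-1)$ with $f^{R(x)}x=f^{R(y)}y$ are distinct and have the same $F$-image, and such coincidences occur whenever a point of $\Lambda$ has more than one return history; $F$ is only countable-to-one, which is precisely why the paper routes through natural extensions in Appendix~B (and cites Buzzi for $\pi$ elsewhere). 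For a non-invertible map, Kolmogorov--Sinai requires the one-sided refinements $\bigvee_{i=0}^{n}F^{-i}\{\dlj\}$ to generate the full Borel $\sigma$-algebra mod $\eta$; here every element of every such refinement is a union of stable sets (each $\dlj$ is stable-saturated and $F$ maps stable sets into stable sets), so these refinements lie inside $\bpi^{-1}$ of the Borel sets of $\bDelta$ and can never separate two points on the same stable set. Consequently the step $h_\eta(F)=h_\eta(F,\{\dlj\})$ is not an application of the generator theorem: it is exactly the nontrivial content of the lemma --- that collapsing stable sets loses no entropy --- and your argument assumes it rather than proves it. (Your step $h_\eta(F,\{\dlj\})=h_{\bareta}(\barF,\{\bdlj\})$ is correct, and the infinite-static-entropy issue you flag only compounds matters: if $H_\eta(\{\dlj\})=\infty$ the middle equality may read $\infty=\infty$ while the outer quantities are finite.) The paper supplies the missing mechanism differently: $h_\eta(F)=h_{\eta^\sharp}(F^\sharp)$ and $h_{\bareta}(\barF)=h_{\bareta^\sharp}(\barF^\sharp)$ by Rokhlin, and $\bpi^\sharp$ is an isomorphism between the two natural extensions because uniform contraction along stable sets makes $\bigcap_{n}F^{n-1}(\omega^s(x_n))$ a single point (Appendix~B). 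If you wish to avoid natural extensions, you would instead have to prove directly that the relative (fiber) entropy of $(F,\eta)$ over the factor $(\barF,\bareta)$ vanishes, using the contraction along stable sets; as written, that step is absent.
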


\begin{proof}[Proof of Lemma~\ref{lem:var tnu}]
Let $\eta \in \G_\Delta$.
The fact that $\bareta = \bpi_* \eta \in {\cal M}_{\barF}(\bDelta^\infty)$
is clear. That $\bareta \in \G_{\bDelta}$ will follow once we prove
 assertion (i) of the lemma:
From Sect. 5.1I, we see that $\log J^u_\mu F$ and $J\barF$ are related by
$J\barF \circ \bpi = J^u_\mu F \cdot e^{a \circ F - a}$ for a bounded
function $a$ (Lemma 5.1(4)). It follows
that
\begin{equation} \label{comp2}
\int_\Delta \log J^u_\mu F \, d\eta = \int_\Delta \left(\log J\barF \circ \bpi
+ a - a \circ F\right) \, d \eta
= \int_{\bDelta} \log J\barF \, d\bareta\ ,
\end{equation}
the invariance of $\eta$ being used in the second equality.

Assertion (ii) follows from (a) the entropy of a transformation is
equal to that of its natural extension, and (b) the natural extension of
$(F, \eta)$ is isomorphic to that of $(\barF, \bareta)$.
See Appendix B for more detail on (b).
\end{proof}

To complete the proof,
we will show that
(i) the results of Theorem 3 are applicable to the quotient tower, and
(ii) $\tnu$ as defined in
 part (b) is in $\G_\Delta$ and projects to $\bnu$.
These two steps together will show that (\ref{comp})
is in fact an equality, and the quantity on the right is $=\log \ra$.

 To apply Theorem 3, it suffices to show
that condition (*) holds in the present setting, i.e. for the quotient tower
of a hyperbolic tower arising from $(f,M;H)$ and
 satisfying {\bf (A.1)}--{\bf (A.4)}.
Notice first that (*) holds if $\|Df\|$ is bounded, for
$J^u_\mu F^n$ can grow at most exponentially and
$J\barF^n$ on the corresponding set is $ \le J^u_\mu F^n \cdot e^{|a|_\infty}$ where $a$ is
as in Lemma 5.3. Thus there is a potential
problem only in the setting of Theorem C, where $\|Df\|$ may become
arbitrarily large as one approaches the singularity set $\Si$. Here
it is {\bf (H.2)} of Section~\ref{tower review}.II and \eqref{eq:d1 blowup}
in Sect. 2.1.III that give what we need: Since $F^{-j} \Delta_j =
\{x \in \Lambda: R(x)>j \}$, we have $d(\pi \Delta_j, \Si) \geq \delta \xi_1^{-j}$ for some $\delta>0$ and $\xi_1>1$ by {\bf (H.2)}. This together with
\eqref{eq:d1 blowup} implies that on $\pi(\Delta_j)$,
 $ |\det (Df|_{E^u})| \le (C_1 \delta^{-a} \xi_1^{aj})^p$ where
$p$ is the dimension of $E^u$. Thus
on $\Delta_0 \cap \{R=n\}$, we have
$$
\log J^u_\mu F^n
=  \sum_{j=0}^{n-1}  \log |\det (Df|_{E^u}) \circ f^j|
\le \ {\rm const} \ n^2\ ,
$$
which, as explained above, gives (*).

It remains to produce $\tnu$ with the properties in (ii).
Let $\tmu_*$ be the physical conditionally invariant distribution from
Theorem~\ref{thm:hyp conv}.
For $\vf \in \mbox{Lip}^u(\Delta)$, define $\tmu^\vf$ to be the measure such that
$d\tmu^\vf = \vf d\tmu_*$.  Notice that since $\bpi_* \tmu_* \in \B$ and
$|\vf|_\infty + |\vf|^u_{\lip} < \infty$, we also have $\bpi_* \tmu^\vf \in \B$.
Let $\bpsi_\vf$ denote the density of $\bpi_* \tmu^\vf$ with respect to $\bm$.
Now using Theorem~\ref{thm:hyp conv}(b),
\[
\lim_{n \to \infty} \ra^{-n} \int_{\Delta^n} \vf \, d\tmu_*
= \lim_{n \to \infty} \ra^{-n} \tmu^\vf(1_{\Delta^n})
= \lim_{n \to \infty} \ra^{-n} \F_*^n\tmu^\vf(1)
= d(\bpsi_\vf) .
\]
Let $Q(\vf) = d(\bpsi_\vf)$.  Then
$Q$ is clearly linear in $\vf$, positive and satisfies
$Q(1)=1$.  Also, $|Q(\vf)| \leq |\vf|_\infty Q(1)$ so that $Q$ extends to a bounded linear
functional on $C^0_b(\Delta)$.  By the Riesz representation theorem, there exists a
unique Borel probability measure $\tnu$ satisfying $\tnu(\vf) = Q(\vf)$ for each
$\vf \in C^0_b(\Delta)$.   Since $1_{\Delta^n} = 1_{\Delta^{n-1}} \circ \F$,
the invariance of $\tnu$ follows from
\[
\tnu(\vf \circ \F) = \lim_{n \to \infty} \ra^{-n} \tmu_* (\vf \circ \F \cdot 1_{\Delta^n})
= \lim_{n \to \infty} \ra^{-n}  \F_*\tmu_* (\vf \cdot 1_{\Delta^{n-1}})
= \lim_{n \to \infty} \ra^{1-n}  \tmu_*(\vf \cdot 1_{\Delta^{n-1}}) = \tnu(\vf)
\]
by the conditional invariance of $\tmu_*$.

Since $(\bpi_*\tmu_*)|_{\Delta^n} = (h_*\bm) |_{\bDelta^n}$ for every $n$,
it follows that $\bpi_* \tnu = \bnu$. To place $\tnu \in \G_\Delta$,
we need to show $\tnu(\log J^u_\mu F) < \infty$. This is true by
(\ref{comp2}) with $\eta = \tnu$ and the fact that the integral
on the right is known to be finite.

\bigskip
 \noindent
{\em Other properties of $\tnu$.}
The ergodicity of $\tnu$ follows from that
of $\bnu$. To show that
$\tnu$ enjoys exponential decay of correlations, we begin by decomposing
$\tnu$ into conditional measures $\tnu^s$ on $\omega^s$-leaves and a transverse measure
$\tnu_T$ on the set of stable leaves in each $\dlj$.  For
$\vf \in C^0_b$, define $\bvf(x) = \int_{\omega^s(x)} \vf \, d\tnu^s$.
Since each $\tnu^s$ is a probability measure, we have $\bvf \in C^0_b$.  By definition,
$\bvf$ is constant on $\omega^s$-leaves and $\tnu(\vf) = \tnu(\bvf) = \bnu(\bvf)$.  Also
if $\vf \in \mbox{Lip}^u(\Delta)$, then $\bvf \in \mbox{Lip}^u(\Delta)$ so that we may
consider $\bvf \in \B_0$ as a function on $\bDelta$.

Now let $\vf \in \mbox{Lip}^u(\Delta)$ and $\psi \in \mbox{Lip}^s(\Delta)$
with $\tnu(\vf) = \tnu(\psi) =0$.
Define $\bvf$ as above and let
$\bpsi_k(x) = \int_{\omega^s(x)} \psi \circ F^k \, d\tnu^s$.
Note that $\tnu(\bpsi_k) = \tnu(\psi) = 0$.
Then setting $n = k + \ell$, we write
\begin{equation}
\label{eq:corr}
\tnu(\vf \, \psi \circ F^n) = \tnu(\vf \, (\psi \circ F^n- \bpsi_k \circ F^\ell))
+ \tnu((\vf - \bvf) \, \bpsi_k \circ F^\ell) + \tnu(\bvf \, \bpsi_k \circ F^\ell).
\end{equation}
Since $\bvf$ and $\bpsi_k$ are constant on $\omega^s$-leaves, we have
$\tnu(\bvf \, \bpsi_k \circ F^\ell) = \bnu(\bvf \, \bpsi_k \circ \barF^\ell)$ and
$\bnu(\bvf) = \bnu(\bpsi_k) =0$.   Then since $\bvf \in \B_0$ and $\bpsi_k \in L^\infty(\bDelta)$,
the last term in \eqref{eq:corr} is $\leq C \tau^\ell \| \bvf \|_{\B_0} |\bpsi|_\infty$
for some $\tau < 1$
by Theorem~\ref{thm:exp conv}(d) (see also \cite[Prop. 2.8]{bdm}).

The second term of \eqref{eq:corr} is identically 0 since,
\[
\begin{split}
\tnu(\vf \, \bpsi_k\circ F^\ell) & = \int_{\Gamma^s(\Delta)}
\Big( \int_{\omega^s} \vf \, \bpsi_k \circ F^\ell \, d\tnu^s \Big) \, d\tnu_T
= \int_{\Gamma^s(\Delta)} \Big( \int_{\omega^s} \vf \, d\tnu^s \Big) \,
\bpsi_k \circ F^\ell \, d\tnu_T  \\
& = \int_{\Gamma^s(\Delta)} \bvf \, \bpsi_k \circ F^\ell \, d\tnu_T =
\tnu(\bvf \, \bpsi_k \circ F^\ell).
\end{split}
\]
To estimate the first term in \eqref{eq:corr}, notice that $|\psi \circ F^n - \bpsi_k\circ F^\ell|_\infty
\le |\psi \circ F^k - \bpsi_k |_\infty$.  Then
since $\psi \circ F^k$ is continuous on each
$\omega^s$, there must exist $x,y \in \omega^s$ such that
$\psi \circ F^k(x) \leq \bpsi_k (\omega^s) \leq \psi \circ F^k(y)$.  Thus
\begin{equation}
\label{eq:stable contract}
|\tnu(\vf \, (\psi \circ F^n- \bpsi_k \circ F^\ell))| \leq |\vf|_\infty \,
|\psi \circ F^k - \bpsi_k|_\infty \leq 2 |\vf|_\infty \, |\psi|^s_{\lip} C \alpha^k
\end{equation}
by definition of $d_s$.  Taking both $k$ and $\ell$ to be approximately $n/2$ completes
the proof.

%%%%%%%%%%%%%%%%%%%%%%%%%%%%%%%%%%%%%%%

\section{Proof of Theorems D and E}
\label{proof of thm tower}

We now return to the original open system $(f,M; H)$, where $f$ is any
dynamical system admitting a tower with the properties in Sect.~\ref{tower results} (see Sect. 5.1 for detail).

\subsection{Proof of Theorem~D}
Let $\tmusrb$ be the SRB measure for $F$ on $\Delta$ before the removal
of the hole.
Note that $\pi_* \tmusrb = \musrb$, the unique SRB measure for $f$
with $\musrb(\Lambda)>0$.  It follows from
\cite[Section 2]{young tower} that $\tmusrb \in \tB$, so that
$\rho(\tmusrb) = \log \ra$ by Theorem~\ref{thm:hyp conv}(a).
Since $\musrb(M^n) = \pi_* \tmusrb(M^n) = \tmusrb(\Delta^n)$ for each $n \ge 0$,
we have $\rho(\musrb) = \log \ra$ and part (a) of Theorem~D is
proved.

To prove part (b),
define $\mu_* = \pi_* \tmu_*$ where $\tmu_*$ is the conditionally invariant measure
from Theorem~4.  We use $\f^n = f^n|_{M^n}$ to describe the surviving dynamics
at time $n$.   It follows from the
relation $\f \circ \pi = \pi \circ \F$ that for any Borel subset $A$ of $M\setminus H$,
we have
\begin{equation}
\label{eq:conditionally inv}
\mu_*(\f^{-1}A) = \tmu_*(\pi^{-1} (\f^{-1}A)) = \tmu_* (\F^{-1} (\pi^{-1}A)) = \ra \, \tmu_*(\pi^{-1}A) = \ra \, \mu_*(A)
\end{equation}
so that $\mu_*$ is a conditionally invariant measure for $\f$ with eigenvalue $\ra$.
By Theorem~4(b),
\[
\lim_{n\to \infty} \frac{\f^n_* \musrb}{\f^n_*\musrb(M)} =
\lim_{n \to \infty} \frac{\pi_*(\F^n_* \tmusrb)}{\F^n_*\tmusrb(\Delta)}
= \pi_* (\tmu_*) = \mu_*,
\]
proving part (b).

To prove part (c), define $\hat \nu = \pi_* \tnu$
where $\tnu$ is from Theorem~5.  Arguing
analogously to \eqref{eq:conditionally inv}, we see that
$\hat \nu$ is an invariant measure for $f$ supported on
$\pi(\Delta^\infty) \subseteq \Omega$.  Write
$J^uf(x) = |\det (Df_x|_{E^u(x)})|$.
We will show

(i) $\int_\Delta \log J^u_\mu Fd \tnu = \int_M \log J^uf \, d\hat \nu$,
and

(ii) $h_{\tnu}(F) = h_{\hat \nu}(f)$.

\noindent
Integrating over sets of the form $\cup_{i=0}^{n-1} F^i(\Delta_0
\cap \{R=n\})$ before
summing over $n$, we see that the left side of (i) is equal to
$\int_{\Delta_0} \log J^u_\mu F^R d\tnu$ and
the right side is equal to $\int_M \log J^uf^R \, d\pi_*(\tnu|_{\Delta_0})$,
the latter using the invariance of $\tnu$ and relation $\pi_* (F^i)_* =
(f^i)_* \pi_*$. These two integrals are easily seen to be equal:
Let $J^u\pi$ denote the Jacobian with respect to $\tmu_\omega$ for
$\omega \in \Gamma^u(\Delta)$ and $\mu_{\omega'}$ where $\pi(\omega)=
\omega'$. Then on $\Delta_0$, $J^u\pi \equiv 1$ as $\Delta_0$ is an isometric
copy of $\Lambda$, so we have
$$
J^uf^R \circ \pi = J^u_\mu F^R \cdot \frac{J^u \pi \circ F^R}{J^u \pi}
= J^u_\mu F^R\ .
$$

For (ii), that $h_{\hat \nu}(f) \leq h_{\tnu}(F)$ is obvious.  The reverse inequality follows from
\cite[Proposition 2.8]{buzzi} since $\pi$ is at most countable-to-one.   Combining
(i) and (ii) and using Theorem~5(b),
\[
\rho(\musrb) = \log \ra = h_{\tnu}(F) - \int_\Delta \log J^u_\mu F \, d\tnu
= h_{\hat \nu}(f) - \int_{M} \log J^uf \, d\hat \nu = P_{\hat \nu}  .
\]
The following lemma completes the proof of part (c).

\begin{lemma}
\label{lem:Gsrb}
$\hat \nu \in \G_H \cap \G_\Si$
\end{lemma}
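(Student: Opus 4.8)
The plan is to verify the two polynomial decay estimates that guarantee membership in $\G_\Si$ and $\G_H$. Recall from the proof of Theorem~D(c) that $\hat\nu = \pi_*\tnu$ is an $f$-invariant probability measure supported on $\pi(\Delta^\infty)\subseteq\Omega$; since $\tnu$ is ergodic (Theorem~\ref{thm:variational}(c)) and $\pi$ semiconjugates $(F,\Delta^\infty)$ to $(f,\Omega)$, its pushforward $\hat\nu$ is ergodic, so $\hat\nu\in\E$. It therefore remains only to produce constants $C,\alpha>0$ with $\hat\nu(N_\ve(\Si\cup\partial H))\le C\ve^\alpha$ for all $\ve>0$: the bound for $N_\ve(\Si)$ then gives $\hat\nu\in\G_\Si$, and the bound for $N_\ve(\partial H)$ gives $\hat\nu\in\G_H$ by the sufficient condition recorded after the definition of $\G_H$ in Sect.~\ref{upper bound}.

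First I would extract the geometric content of {\bf (H.2)}. Every point of $\Delta$ at level $\ell$ is of the form $F^\ell z$ with $z\in\Delta_0$, so a point $y\in\Delta^\infty$ at level $\ell$ satisfies $\pi(y)=f^\ell(\pi(z))$ with $\pi(z)\in\pi(\Delta_0)$; hence, by {\bf (H.2)}, $d(\pi(y),\Si\cup\partial H)=d(f^\ell(\pi(z)),\Si\cup\partial H)\ge\delta\xi_1^{-\ell}$. Consequently, if $\pi(y)\in N_\ve(\Si\cup\partial H)$ then $\delta\xi_1^{-\ell}<\ve$, i.e.\ $\ell>\ell_0(\ve):=\log(\delta/\ve)/\log\xi_1$. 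Since $\bpi^{-1}(\bDelta_\ell)=\Delta_\ell$ (stable sets lie within a single level) and $\bpi_*\tnu=\bnu$ (established in the proof of Theorem~\ref{thm:variational}), this yields
\[
\hat\nu(N_\ve(\Si\cup\partial H)) = \tnu\big(\pi^{-1}(N_\ve(\Si\cup\partial H))\cap\Delta^\infty\big)
\le \sum_{\ell>\ell_0(\ve)}\tnu(\Delta_\ell) = \sum_{\ell>\ell_0(\ve)}\bnu(\bDelta_\ell).
\]

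The remaining step is to bound $\bnu(\bDelta_\ell)$ and sum the geometric series. A short $\barF$-invariance argument — the standard Kakutani tower identity for the return map $\barF^R:\bDelta_0^\infty\circlearrowleft$ — gives $\bnu(\bDelta_\ell)=\bnu(\bDelta_0^\infty)\,\bnu_0\{R>\ell\}\le\bnu_0\{R>\ell\}$. By \eqref{eq:nu decay} one has $\bnu_0\{R=n\}\le C\theta_0^n\ra^{-n}$, and since $\theta_0<\beta<\ra$ (Sect.~\ref{tower review}.III together with the spectral gap), the ratio $\vartheta:=\theta_0/\ra$ lies in $(0,1)$, so $\bnu_0\{R>\ell\}\le C'\vartheta^\ell$. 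Therefore
\[
\hat\nu(N_\ve(\Si\cup\partial H)) \le \sum_{\ell>\ell_0(\ve)}C'\vartheta^\ell \le C''\vartheta^{\ell_0(\ve)} = C''(\ve/\delta)^\alpha,
\qquad \alpha:=\frac{-\log\vartheta}{\log\xi_1}>0,
\]
for $\ve<\delta$; for $\ve\ge\delta$ the bound holds trivially since $\hat\nu$ is a probability measure and $(\ve/\delta)^\alpha\ge1$. This is the required estimate, and both $\hat\nu\in\G_H$ and $\hat\nu\in\G_\Si$ follow.

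The only point requiring a little care is the bookkeeping that ties the tower level of a surviving point to the distance estimate {\bf (H.2)} and then to the exponential tail of $\bnu$ in levels; once the identity $\bnu(\bDelta_\ell)=\bnu(\bDelta_0^\infty)\,\bnu_0\{R>\ell\}$ (forced by $\barF$-invariance exactly as in the Kakutani construction) is in place, the rest is a direct computation using \eqref{eq:nu decay} and $\theta_0<\ra$, so I do not expect any serious obstacle.
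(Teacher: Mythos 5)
Your proof is correct and follows essentially the same route as the paper's: reduce membership in $\G_H\cap\G_\Si$ to the polynomial bound $\hat\nu(N_\ve(\Si\cup\partial H))\le C\ve^\alpha$, use {\bf (H.2)} to confine $N_\ve(\Si\cup\partial H)$ to high tower levels, and control $\tnu(\Delta_\ell)=\bnu(\bDelta_\ell)$ via \eqref{eq:nu decay} and $\theta_0<\ra$. The only difference is that you spell out the Kakutani identity $\bnu(\bDelta_\ell)=\bnu(\bDelta_0^\infty)\,\bnu_0\{R>\ell\}$, which the paper uses implicitly; this is a welcome clarification rather than a deviation.
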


\begin{proof}
That $\hat \nu$ is ergodic follows immediately from the fact that $\tnu$ is ergodic.
In order to show that $\hat \nu \in \G_H \cap \G_\Si$, we
will show that there exist $C, \alpha >0$ such that for each $\ve >0$,
$\hat \nu(N_\ve(\Si \cup \partial H)) \leq C\ve^\alpha$.
Once this is established, we conclude by an argument similar to
Lemma~\ref{lem:approach}
that $\hat \nu$-a.e.\ point approaches $\Si \cup \partial H$ at an arbitrarily slow
exponential rate.

To establish this bound, we need estimates on how $\tnu$ decays up the
levels of the tower.
Recall $\bnu = \bpi_*\tnu$.  In the proof
of Theorem~\ref{thm:variational}, we established that
$d(\pi \Delta_\ell, \Si \cup \partial H) \geq \delta \xi_1^{-\ell}$, $\ell \ge 0$,
by using {\bf (H.2)} of Section~\ref{tower review}.II. Thus we have
\[
\hat \nu(N_\ve(\Si \cup \partial H)) \leq \tnu \! \left( \cup_{\ell : \delta \xi_1^ {-\ell} \leq \ve}
\Delta_\ell \right) \leq \sum_{\ell \geq \log(\delta/\ve)/\log \xi_1} C'
\theta_0^\ell \ra^{-\ell} \leq C'' (\delta^{-1} \ve)^{\log (\ra\theta_0^
{-1})/\log \xi_1},
\]
using  \eqref{eq:nu decay} and $\tnu(\Delta_\ell) = \bnu(\bDelta_\ell)$.
\end{proof}

Finally, we prove part (d).
If $\vf$ is a continuous function on $M$, we define its lift to $\Delta$
by $\tvf = \vf \circ \pi$.  This lift is continuous on each $\dlj$ and
$|\tvf|_\infty \leq |\vf|_\infty$
so that $\tvf \in C^0_b(\Delta)$.  Using Theorem~5(b), we have
\[
\hat \nu(\vf) = \tnu(\tvf) = \lim_{n\to \infty} \ra^{-n} \int_{\Delta^n} \tvf \, d\tmu_*
= \lim_{n \to \infty} \ra^{-n} \int_{M^n} \vf \, d\mu_*   ,
\]
since $\mu_* = \pi_* \tmu_*$.

To complete the proof of Theorem D, it remains to show that $\hat \nu$
enjoys exponential decay of correlations.
Let $C^p(M)$ denote the H\"older continuous functions on $M$ with exponent $p$.
If $\vf \in C^p(M)$ and $p \geq \log \beta / \log \alpha$, then
$\vf \circ \pi \in \mbox{Lip}^u(\Delta)$.  This
can be proved as in \cite[Section 6]{demers norms}.
Also, taking $\psi \in C^p(M)$,  for $x \in \dlj$, $y \in \omega^s(x)$ and
$x_0 = F^{-\ell}x$, $y_0 = F^{-\ell}y$, we have
\[
\begin{split}
|\psi \circ \pi \circ F^n(x) - \psi \circ \pi \circ F^n(y) |
& \leq |\psi|_{C^p} d(\pi (F^{n+\ell}x_0), \pi(F^{n+\ell}y_0))^p  \\
& \leq |\psi|_{C^p} d(f^{n+\ell}(\pi x_0), f^{n+\ell}(\pi y_0))^p
\leq |\psi|_{C^p} C \alpha^{np} .
\end{split}
\]
So taking $\vf, \psi \in C^p(M)$, we may apply
\eqref{eq:stable contract} to $\psi \circ \pi$.  We follow \eqref{eq:corr} and note that
\[
\hat \nu(\vf \, \psi \circ f^n) = \tnu(\vf \circ \pi \cdot \psi \circ f^n \circ \pi)
= \tnu(\vf \circ \pi \cdot \psi \circ \pi \circ F^n) ,
\]
to conclude that the exponential decay of correlations for $\hat \nu$ follows
from that for $\tnu$.

%%%%%%%%%%%%%%%%%%%%%%%%%%%%
\subsection{Proof of Theorem~E}

As an immediate corollary of Theorem D, we have
\begin{equation} \label{halfeq}
\rho(\musrb) \le \pa_{\G_H \cap \G_\Si}\ ,
\end{equation}
since we have identified a measure, namely $\hat \nu$, in $\G_H \cap \G_\Si$
with $P_{\hat \nu} = \rho(\musrb)$.
 We will call upon the results
in Sect. 2.1 to provide the reverse inequality -- once we put ourselves
in a viable setup.
Notice that $\hat \nu = \pi_* \tnu$ necessarily gives positive
measure to $\Lambda = \pi(\Delta_0)$.

\medskip
\noindent
\emph{(a) $\musrb = \vf \mu$ with $\vf \geq \delta>0$ on a neighborhood
of $\Lambda$.}   In this case,
$\hat \nu \in \G_\vf$ since we can simply take the
set $Z$ in the definition of
$\G_\vf$ to be this neighborhood.
By Theorem~C,
$\rho(\musrb) = \rho(\mu_\vf) \geq \pa_{\G_H \cup \G_\Si \cup \G_\vf}$.
This together with (\ref{halfeq}) gives the desired result.

\medskip
\noindent
\emph{(b) $\Lambda$ is contained in
a $\musrb$-hyperbolic product set.}  Taking this set to be $\Pi$ in the
definition of $\Gsrb$, it is immediate that $\hat \nu \in \Gsrb$.
Theorem~C and (\ref{halfeq}) then give the two halves of the desired equality.

%%%%%%%%%%%%%%%%%%%%%%%%%%%%%%%%%%%%%%%

\section*{Appendix}
%\appendix

\subsection*{A. Lyapunov charts for maps with singularities}
\label{charts}

In this section, we prove the statements $(a)(iii')$, $(b)(ii')$, and $(b)(iii')$ made in
Section~4.2.4 regarding the Lyapunov charts $\{ \Phi_x \}$.
All notation is as in Section~4.

We begin with $\nu \in \G_\Si$ and the set $V'$ of regular points in the sense
of Oseledec.  Each $x \in V'$ has  $p$ distinct Lyapunov exponents
$\lambda_1, \ldots, \lambda_p$ with corresponding subspaces
$E_1(x), \ldots, E_p(x)$ such that $T_xM = \oplus_i E_i(x)$.  Let
$g_\ve(x) = \frac 13 \min \{\ve, d(x, \Si) \}$.

Fix $\delta >0$.  It follows by standard arguments (see \cite[Sect.~3.1]{young notes})
that for $\nu$-typical $x$,
one can define an inner product, $\langle \cdot, \cdot \rangle'_x$,
on the tangent space $T_xM$ such that item $(b)(i)$
of Proposition~4.1 holds.  Denote by $\| \cdot \|'_x$ the norm
induced by $\langle \cdot, \cdot \rangle'_x$ and by $\| \cdot \|_x$ the Euclidean norm
on $T_xM$.  It follows from the
same construction that there exists a measurable function
$\ell_0(x): V' \to [1,\infty)$,
with $\ell_0(f^ix) < e^{2\delta i} \ell_0(x)$ for $i \ge 0$ and
\begin{equation}
\label{eq:new norm}
p^{-1/2} \|v\|_x \leq \|v\|'_x \leq \ell_0(x) \|v\|_x  \qquad \mbox{for all } v \in T_xM .
\end{equation}
Define a linear map $L_x:T_xM \to \mathbb{R}^d$
which takes $E_i(x)$ to $\{ 0 \} \times \cdots \times \mathbb{R}^{m_i(x)} \times \cdots \times \{ 0 \}$
for each $i$ and such that
$\langle L_xu, L_xv \rangle_x = \langle u,v \rangle'_x$.
Then $\Phi_x := \mbox{exp}_x \circ L_x^{-1}$ is a Lyapunov chart satisfying
properties $(a)(i)$ and $(a)(ii)$ of Proposition~4.1.

The construction outlined thus far is standard and is not affected by
the presence of singularities (see \cite[Part I, Theorem 2.2]{katok}).
We now proceed to prove the statements
of Section~4.2.4 which are affected by the singularities.  We drop the subscript
$x$ for simplicity of notation and write $\| \cdot \|$ and $\| \cdot \|'$ in what follows.

Notice that in the notation of Sect.~4, $R(r) = R(r; \| \cdot \|')$ denotes the
ball of radius $r$ in the (Lyapunov) norm $\| \cdot \|'$ since that is the norm
of the Lyapunov charts $\Phi_x$.  To distinguish between norms,
we use $R(r ; \| \cdot \|)$ to denote
the ball of radius $r$ in the Euclidean norm on $T_xM$.  We identify $T_xM$ and
$\R^d$ and view $L_x$ formally as a change of norm.

\medskip
\noindent
{\em Proof of $(a)(iii')$.}
Recall the injectivity radius from Section~\ref{upper bound},
$\iota(x,U) \geq \min \{ s, d(x, M\setminus U)^\varsigma \}$, given by equation \eqref{eq:exp}.  Since we have assumed $b \geq \varsigma$, we have
$\iota(x,U) \geq g_\ve(x)^b$ for $\ve \leq s$.  Thus again using \eqref{eq:exp},
for $y \in B(x, 0, g_\ve(x)^b)$ and $w = $exp$_x^{-1} y$, we have
\[
\|D(\mbox{exp}_x)(w)\| \leq c_0 \qquad \mbox{and} \qquad
\|D(\mbox{exp}_x^{-1})(w)\| \leq c_0.
\]
This implies that exp$_x$ maps $R(c_0^{-1} g_\ve(x)^b; \| \cdot \|)$ injectively into
$B(x,0,g_\ve(x)^b)$.  Thus for $u,v \in R(c_0^{-1} g_\ve(x)^b; \| \cdot \| )$, we
use \eqref{eq:new norm} to estimate,
\[
\begin{split}
d(\Phi_xu, \Phi_xv)  & \leq d(\mbox{exp}_x \circ L_x^{-1}u, \mbox{exp}_x \circ L_x^{-1}v)
\leq c_0 \| L_x^{-1}u - L_x^{-1}v\| \leq c_0 \sqrt{p} \, \|u-v\|' \\
&  \leq c_0 \sqrt{p} \, \ell_0(x) \|u-v\| \leq c_0^2 \sqrt{p} \, \ell_0(x) d(\Phi_xu,\Phi_xv) ,
\end{split}
\]
which
establishes $(a)(iii')$ with $K = c_0 \sqrt{p}$ and
$\ell_1(x) = c_0^2 \sqrt{p} \, \ell_0(x)$.
Note that by \eqref{eq:new norm},
$R(\ell_1^{-1}(x) g_\ve(x)^b ) \subseteq R(c_0^{-1} g_\ve(x)^b\, ; \| \cdot \|)$,
with room to spare.

\medskip
\noindent
{\em Proof of $(b)(iii')$.}
Recall that
\[
\hat f_x = \mbox{exp}^{-1}_{fx} \circ f \circ \mbox{exp}_x
\; \; \mbox{while} \; \;
\tf_x = \Phi_x^{-1} \circ f \circ \Phi_x = L_{fx} \circ \hat f_x \circ L_x^{-1} .
\]
Taking $u,v, h \in R(c_0^{-1} g_\ve(x)^b; \| \cdot \| )$,
we use \eqref{eq:new norm} to estimate
\begin{equation}
\label{eq:lip}
\frac{\| D\tf_x(u)h - D\tf_x(v)h\|'}{\|h\|'}
\leq \frac{\| D\hat f_x(u)h - D\hat f_x(v)h\|}{\|h\|} \sqrt{p} \, \ell_0(x)
\leq \| D^2\hat f_x(z) \| \|u-v\| \sqrt{p} \, \ell_0(x)
\end{equation}
for some $z \in R(c_0^{-1} g_\ve(x)^b; \| \cdot \|)$.   By \eqref{eq:d2 blowup},
$\| D^2\hat f_x(z) \| \leq C_1 d(\mbox{exp}_x(z), \Si)^{-b}$.   Since
exp$_x(z) \in B(x,0,g_\ve(x)^b)$, we have
$d(\mbox{exp}_x(z), \Si) \geq g_\ve(x)$,
so that $\| D^2\hat f_x(z) \| \leq C_1 g_\ve(x)^{-b}$.  Finally, since
$\|u-v\| \leq \sqrt{p} \, \|u-v\|'$, we conclude that
\[
\mbox{Lip}(D\tf_x) \leq p \, \ell_0(x) C_1 g_\ve(x)^{-b}.
\]
The statement follows by taking $\ell(x)$ to be the larger of
$p \, C_1 \ell_0(x)$ and $\ell_1(x) = c_0^2 \sqrt{p} \, \ell_0(x)$.

\medskip
\noindent
{\em Proof of $(b)(ii')$.}  We use \eqref{eq:lip} with $v=0$ and
$u \in R(\delta \ell(x)^{-1} g_\ve(x)^b)$.  This yields
\[
\| D\tf_x(u) - D\tf_x(0) \|' \le \ell(x) g_\ve(x)^{-b} \| u\|' \le \delta.
\]
This implies that restricted to $R(\delta \ell(x)^{-1} g_\ve(x)^b)$,
we have Lip$(\tf_x - D\tf_x(0)) \le \delta$ as required.

%%%%%%%%%%%%%%%%%%%%

\subsection*{B. Natural extensions of tower maps}

Let $T: (X, \Sigma, \nu) \circlearrowleft$ be a measure-preserving transformation
(mpt)
of a probability space. Recall that the {\it natural extension} of $T: (X, \Sigma, \nu) \circlearrowleft$, denoted here by $T^\sharp: (X^\sharp, \Sigma^\sharp, \nu^\sharp) \circlearrowleft$, is defined as follows:
$$
X^\sharp = \{(x_1, x_2, \cdots) \in \Pi_{i=0}^\infty X : T(x_{i+1})=x_i\},
$$
$$T^\sharp(x_1, x_2, \cdots) = (T(x_1), x_1, x_2, \cdots),
$$
$\Sigma^\sharp$ is generated by cylinder sets with $\Sigma$ in each coordinate, and
$$
\nu^\sharp\{x_1 \in A_1, \cdots, x_n \in A_n\} = \nu(A_n \cap T^{-1}A_{n-1}
\cap \cdots \cap T^{-(n-1)}A_1)\ .
$$

These following facts about tower maps (see Sect. 5.1 for notation)
are used:

\bigskip
\noindent (1) Consider $F: (\Delta^\infty, \Sigma, \eta) \circlearrowleft$
where $\eta$ is any $F$-invariant Borel probability measure, and
let $\barF: (\bDelta^\infty, \overline{\Sigma}, \bareta) \circlearrowleft$ be the
corresponding quotient system. We claim that the natural extensions of
these two mpt's are isomorphic.

\medskip
\noindent {\it Proof.} Define
$\bpi^\sharp : \Delta^\sharp \to \bDelta^\sharp$ by
$\bpi^\sharp(x_1, x_2, \cdots) = (\bpi(x_1), \bpi(x_2), \cdots)$.
Clearly, $\bpi^\sharp \circ F^\sharp = \barF^\sharp \circ \bpi^\sharp$,
$\bpi^\sharp_*(\eta^\sharp)=\bareta^\sharp$, and $\bpi^\sharp$ is onto.
The assertion follows once we show $\bpi^\sharp$ is 1-1.

Suppose $\bpi^\sharp(x_1, x_2, \cdots) =
\bpi^\sharp(y_1, y_2, \cdots)$.  Letting $\omega^s(x_n)$
denote the stable set of $x_n$, we
have, by definition, $x_1 \in \cap_{n=1}^\infty F^{n-1}(\omega^s(x_n))$.
The uniform contraction of $F$ along stable sets implies that this intersection
consists of a single point. Likewise, $\{y_1\} = \cap_{n=1}^\infty
F^{n-1}(\omega^s(y_n))$. Since $\bpi(x_n)=\bpi(y_n)$ is equivalent to
$\omega^s(x_n)=\omega^s(y_n)$, we have proved $x_1=y_1$.
Applying the same argument to the sequences $(x_k, x_{k+1}, \ldots )$
and $(y_k, y_{k+1}, \ldots)$, we conclude that $x_k=y_k$ for all $k \geq 1$.
\hfill $\square$

\bigskip
\noindent (2) Next given $\barF: (\bDelta^\infty, \overline{\Sigma}, \bareta) \circlearrowleft$ and $\bDelta_0^\infty \subset \bDelta^\infty$, we call
$\barF^R: (\bDelta^\infty_0, \overline{\Sigma}_0, \bareta_0) \circlearrowleft$
with $\bareta_0 = \bareta|_{\bDelta_0}$ normalized its
{\it induced map} on $\bDelta_0$, and claim that the induced
map of $\barF^\sharp$ on
$\bDelta_0^\sharp = \{(x_1, x_2, \cdots) \in \bDelta^\sharp : x_1 \in \bDelta_0\}$
is the natural extension of $\barF^R$. The proof is easy.

\bigskip
\noindent {\bf Fact.} For an arbitrary mpt $T: (X, \cal A, \nu) \circlearrowleft$, it
is proved in \cite{rohklin} that $h_\nu(T)= h_{\nu^\sharp}(T^\sharp)$.

%%%%%%%%%%%%%%%%%%%%%%%%%%%%%%%%%%%%%%%

\small

\end{document}